\newtheorem{corollary}{Corollary}
\newtheorem{theorem}{Theorem}
\newtheorem{definition}{Definition}
\newtheorem{lemma}{Lemma}
\newtheorem{remark}{Remark}
\def\<{\leqslant}
\def\>{\geqslant}
\begin{document}
%\begin{document}
\title{Generalizing Negative Imaginary Systems Theory to    Include Free Body Dynamics:\\  Control of Highly Resonant  Structures with Free Body Motion }

\author{M.~A.~Mabrok,
    A.~G.~Kallapur, %~\IEEEmembership{Member,~IEEE,}
        I.~R.~Petersen %~\IEEEmembership{Fellow,~IEEE,}
   and~A.~Lanzon %~\IEEEmembership{Senioe Member,~IEEE,} % <-this % stops a space

%\thanks{This work was supported by the Australian Research Council.}
\thanks{M. Mabrok, A. Kallapur and I. R. Petersen are with the School of Engineering and Information Technology, University of New South Wales at
 the Australian Defence Force Academy, Canberra ACT 2600, Australia, email:abdallamath@gmail.com, abhijit.kallapur@gmail.com, i.r.petersen@gmail.com.}% <-this % stops a space
\thanks{A. Lanzon is with the Control Systems Centre, School of Electrical and Electronic Engineering, University of Manchester, Manchester M13 9PL,
United Kingdom, email:Alexander.Lanzon@manchester.ac.uk.}
\thanks{This research was supported by the Australian Research Council and the EPSRC.}

}

% make the title area
\maketitle

\begin{IEEEkeywords}
Negative imaginary systems, flexible structures, free body motion.
\end{IEEEkeywords}

\begin{abstract}
Negative imaginary (NI) systems  play an important role in the
robust control of highly resonant flexible structures. In this
paper, a generalized NI system  framework is presented. A new NI
system definition is given, which allows for flexible structure
systems with colocated force actuators and position sensors, and
with free body motion. This definition extends the existing
definitions of NI systems. Also,   necessary and sufficient
conditions are provided for the stability of  positive feedback
control systems where the plant is NI according to the new
definition and the controller is strictly negative imaginary. The
stability conditions in this paper are given purely in terms of
properties of the plant and controller transfer function matrices,
although the proofs rely on state space techniques. Furthermore, the
stability conditions given are independent of the plant and
controller system order. As an application of these results, a case
study  involving the control of a flexible robotic  arm  with a
piezo-electric actuator and sensor is presented.
\end{abstract}

\section{Introduction}
%\doublespacing

  Flexible structure dynamics   arise  in many  areas such
as flexible robot manipulators  \cite{Wilson2002}, ground and
aerospace vehicles \cite{Harigae20}, atomic force microscopes (AFMs)
\cite{Bhikkaji2009,Mahmood2011} and other nano-positioning systems
\cite{Salapaka2002,Hulzen2010,Devasia2007,Diaz2012}. Flexible
structures can be  modeled as infinite dimensional distributed
parameter systems \cite{Ray1978281}. However,  finite dimensional
models  are often
used for the purpose of
 designing controllers  \cite{Curtain2009,Demetriou2003,Ray1978281,Preumont2011}.
  In designing controllers for these flexible systems, it
is important to consider  the effect of highly resonant modes. Such
resonant modes  are known to  adversely affect the stability and
performance of flexible  structure feedback control systems
\cite{Preumont2011,fanson1990,petersen2010}, and are often very
sensitive to changes in environmental variables. For instance,  a
small change in the environment of the system such as changing
temperature, can lead to  significant  changes in the resonant
frequencies of such systems. These changes in resonant frequencies
can lead to large changes in the gain and phase of the system
frequency response at a given frequency, which may lead to
instability or poor performance in the corresponding feedback
system.
In addition, highly resonant modes lead to vibrational
effects which limit the ability of control systems to achieve
desired levels of performance in many applications
 such as precision instrumentation, optical systems,
precision machine tools, wafer steppers, telescopes, and  atomic
force microscopes \cite{Preumont2011}. These issues arising from the
presence of highly resonant modes in flexible structures motivate
the need for tools to guarantee robust stability and performance in
flexible structure control systems.

One  common solution to issues of   robustness, stability, and
performance in the control of highly resonant flexible structures is
to  use force actuators combined with colocated measurements of
velocity, position, or acceleration
\cite{Preumont2011,fanson1990,petersen2010}.
 Colocated control with velocity measurements, known as
negative-velocity feedback, can be  used  to directly increase the
effective damping in the system, thereby facilitating the design of
controllers that can guarantee closed-loop stability in the presence
of parameter variations and unmodeled plant dynamics
\cite{Preumont2011}. Similarly, a class of  colocated controllers
with position measurements, known as  positive-position feedback
controllers, where velocity sensors are replaced with position
sensors, can also be used to increase damping in  flexible systems
as discussed in \cite{fanson1990,balas1979}.
 Also, positive-position
feedback controllers are robust against uncertainties in  resonant
frequencies as well as unmodeled plant dynamics, in a similar way to
negative-velocity feedback controllers
\cite{fanson1990,lanzon2008,petersen2010}.

The properties    of negative-velocity feedback has been studied
using passivity theory and  the theory of  positive real (PR) linear
time invariant (LTI) systems; e.g., see
\cite{anderson-bk1973,brogliato-bk2007}. However, PR theory cannot
be used directly when using position or acceleration measurements
\cite{petersen2010}. This  drawback is important in  applications to
the field of nanotechnology, especially for nano-positioning
systems, where position measurements are widely used; see e.g.,
\cite{Bhikkaji2009,Dong2007,Sebastian2005,Salapaka2002,Hulzen2010,Michellod2006,Devasia2007,Messenger2009,ElRifai2004,Diaz2012}.
Similar issues also arise in application to the area of robotics
where position measurements are also widely used.

  Lanzon and Petersen introduced a notion of  negative imaginary
(NI) systems  in \cite{lanzon2008,petersen2010}  for the robust
control of flexible structures with force  actuators combined with
position or acceleration sensors. %In the single-input single-output
(SISO) case, NI systems are defined by considering the properties of
the imaginary part of the system frequency response $G(j\omega) $
and requiring the condition $j\left( G(j\omega )-G(j\omega )^{\ast
}\right) \geq 0$ for all $\omega\in(0,\infty)$.
 The NI property
arises  in many practical systems. For example, such systems arise
when considering the transfer function from a force actuator to a
corresponding colocated position sensor (for instance, a
piezoelectric sensor) in a lightly damped structure
\cite{fanson1990,petersen2010,Bhikkaji2009,Yong2010,hagen2010}.
Another area  where the underlying  system dynamics are NI, in the area of
nano-positioning systems; see e.g.,
\cite{Bhikkaji2009,Dong2007,Sebastian2005,Salapaka2002,Hulzen2010,Michellod2006,Devasia2007,Messenger2009,ElRifai2004,Diaz2012}.
Also, the positive-position feedback control scheme in
\cite{fanson1990,goh1995}, can be considered using  the  NI
framework. Furthermore, other control methodologies in the
literature such as integral resonant control (IRC)
\cite{Pereira2011} and resonant feedback control
\cite{Halim2001,Mahmood2008}, fit into the NI framework and their
stability robustness properties can be explained by NI systems
theory.

\begin{figure}
  \centering\includegraphics[width=5cm]{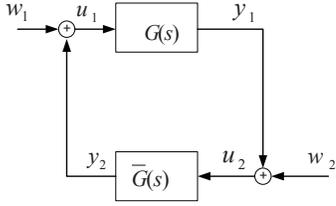}\\
  \caption{A negative-imaginary feedback control system. If the plant
transfer function matrix $G(s)$ is NI and the controller transfer
function matrix $\bar{G}(s)$ is SNI, then the positive-feedback
interconnection is internally stable if and only if the DC gain
condition, $\lambda_{max}(G(0)\bar{G}(0))<1,$ is satisfied.
}\label{conn:NI:SNI}
\end{figure}

The stability robustness of  interconnected NI systems  has been
studied in \cite{lanzon2008,petersen2010}. In these papers, it is
shown that a necessary and sufficient condition for the internal
stability of a positive-feedback control system  (see Fig.
\ref{conn:NI:SNI}) consisting of an NI plant  with transfer function
matrix $G(s)$ and a strictly negative imaginary (SNI) controller
with transfer function matrix $\bar{G}(s)$ is given by the DC gain
condition
\begin{equation}\label{DC:ian:alex:cond}
    \lambda_{max}(G(0)\bar{G}(0))<1,
\end{equation}
 where the notation $\lambda_{max}(\cdot)$ denotes the maximum
eigenvalue of a matrix with only real eigenvalues. This stability
result has been used in a number of  practical applications
\cite{hagen2010,Bhikkaji2009,Mahmood2011,Ahmed2011,Bhikkaji2012,Diaz2012}.
For example in \cite{hagen2010},    this   stability result is
applied to the problem of decentralized control of large vehicle
platoons. In \cite{Bhikkaji2009,Mahmood2011}, the NI stability
result is applied to  nanopositioning  in   an  atomic force
microscope. A positive position feedback control scheme based on the
NI stability result provided in \cite{lanzon2008,petersen2010} is
used to design a novel compensation method for a coupled
fuselage-rotor mode of a rotary wing unmanned aerial vehicle  in
\cite{Ahmed2011}. In \cite{Diaz2012}, an IRC scheme based on the
stability results provided in \cite{lanzon2008,petersen2010} is used
to design  an active vibration control  system for the mitigation of
human induced vibrations in light-weight civil engineering
structures, such as floors and footbridges via proof-mass actuators.
 An identification algorithm which enforces the
NI constraint is proposed in  \cite{Bhikkaji2012} for estimating
model parameters, following which  an Integral resonant controller
is designed for damping vibrations in  flexible structures. In
addition, it is shown in \cite{Schaft2011} that the class of linear
systems having NI transfer function matrices is closely related to
the class of linear Hamiltonian input-output systems. Also, an
extension of the NI systems theory  to infinite-dimensional systems
is presented in \cite{Opmeer2011}.

The NI framework presented in \cite{lanzon2008,petersen2010}
considers systems with poles in the open left half of the complex
plane. This theory has  been extended in \cite{xiong21010jor} to
include NI systems  with  poles in the closed left half of the
complex plane, except at the origin. Also,  further extensions  to
NI systems theory include the study of NI controller synthesis
\cite{song2010,Song2012b}, connections between NI systems analysis
and $\mu$-analysis \cite{Engelken2010a}, and conditions for robust
stability analysis of mixed NI and bounded-real classes of
uncertainties \cite{patra2010}. Furthermore, the concept of lossless
NI transfer functions is introduced  in \cite{Xiong2012},  an
algebraic approach to the realization of a lossless NI behavior is
presented in \cite{Rao2012}, and a spectral characterization of NI
descriptor systems is discussed  in  \cite{Benner2012}. The NI
systems theory can be extended to  nonlinear systems using the
concept  of counter-clockwise input-output dynamics as presented in
\cite{Angeli2006,Angeli2007,Padthe2005}. In \cite{Padthe2005}, a
sufficient conditions under which a semilinear Duhem model is
counter-clockwise is given, where the counter-clockwise input-output
system is restricted to  periodic input signals.  %Stability of
positive feedback interconnection for SISO linear case is provided
in \cite{Padthe2005}.
\begin{figure}
  \centering\includegraphics[width=8cm]{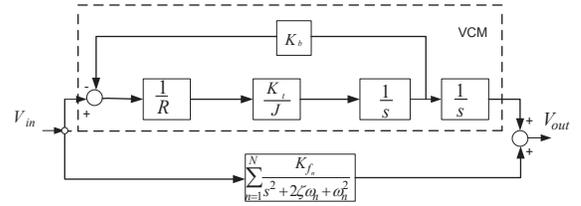}\\
  \caption{Block diagram of a disk-drive reader head system which includes a flexible structure driven by a
voice coil motor (VCM). The parameters $R, K_t , J$, and $K_b$ are
the  coil resistance, torque constant, moment of inertia, and back
electromotive force gain, respectively. Also, the parameters $ K_f ,
\zeta$, and $\omega_n$ are the flexible structure gain, damping
ratio, and natural frequency respectively. }\label{vcm:fig}
\end{figure}

Despite  generalizations of  the   NI systems  framework  presented
in \cite{xiong21010jor}, an important  class of systems, that  cannot
be captured by the existing NI systems  framework, corresponds to
flexible systems with free body motion. These  systems arise in
areas such as rotating flexible  spacecraft  \cite{Hugher1986},
rotary cranes \cite{Thomas1996}, robotics and  flexible link
manipulators \cite{Pereira2011,Mahmood2008,Choi1999}, and dual-stage
hard disk drives \cite{Yunfeng2003,Sang2001,Goh2001,Devasia2012}.
Flexible structures  with  free body motion lead to  dynamical
models including  poles at the origin, which is not covered in
earlier work on NI systems theory. In particular, the stability
condition \eqref{DC:ian:alex:cond} is not well defined in the case
of flexible structures with  free body motion which results in poles
at the origin, since in this case, the plant DC gain  $G(0)$ will be
infinite. However, control systems involving flexible structures
with free body motion arising  in these important application areas
still suffer from the stability and performance issues mentioned
above.
Thus we are motivated to extend the NI robust stability
theory developed in \cite{lanzon2008,petersen2010,xiong21010jor} so
that it can be applied to control systems involving highly resonant
flexible structures with free body motion.

Fig. \ref{vcm:fig} shows  a block diagram   of  a system which
includes a flexible structure with free body motion  that arises in
a problem of disk-drive control; see \cite{orpacharapan2004}. Here,
a voice coil motor (VCM) is used to actuate the arm of the reader
head. The free body motion of the reader head leads to a transfer
function from the input $V_{in}$ to the VCM output $V_{out}$ which
has poles at the origin. Furthermore, the  overall system  satisfies the  NI frequency response property  and
includes poles at the origin. However, the NI stability results
presented in \cite{lanzon2008,petersen2010,xiong21010jor} do not
allow for poles at the origin and cannot be applied to control
systems such as this   disk-drive control  system.

In this paper, we present a new generalized   definition of   NI
systems which allows for flexible structures with colocated force
actuators and position sensors and  with free body motion. This
 definition extends     the previous definitions of NI systems presented
in \cite{lanzon2008,petersen2010,xiong21010jor} to allow for up to
two poles at the origin. We also derive new generalized   stability
conditions for positive-feedback control systems involving  an NI
plant  and an SNI controller.
 %These results extend   previous  stability results presented in
%\cite{lanzon2008,petersen2010,xiong21010jor}, to allow for this new
%definition of NI systems. As such, the new stability results
%presented in this paper provides  definitive stability results for
%the positive feedback interconnection between an NI system and an
%SNI system.

As in \cite{lanzon2008,petersen2010,xiong21010jor}, the stability
conditions presented in this paper,  are  given purely  in terms of
properties of the plant and controller transfer function matrices,
although the proofs rely on  state space  techniques. Furthermore,
the stability conditions given are independent of the plant and
controller system order and can be stated without   using the fact
that the plant and the controller transfer function matrices are
rational. However, the proofs given in this paper only apply to the
rational case.

%The proofs of the  NI stability results given in this paper and  in the
%papers \cite{petersen2010,xiong21010jor,song2010} use the fact that
%NI systems can be transformed into PR systems and vice versa under
%certain  technical assumptions. However, this equivalence is not
%complete as mentioned in \cite{petersen2010,xiong21010jor,song2010}.
%For instance, such a transformation applied to an  SNI system always
%leads to a non-strict PR system and thus SNI systems cannot be
%transformed into strictly positive real systems. Hence, the
%passivity theorem \cite{anderson-bk1973,brogliato-bk2007} cannot be
%used to prove  the stability of the positive feedback
%interconnection of an NI and an SNI system and a more sophisticated
%methodology is required to prove the stability results of this paper
%as well as the previous results in
%\cite{petersen2010,xiong21010jor,song2010}.

Preliminary conference versions of the    stability results
presented in this paper   were presented in
\cite{Mabrok2012,Mabrok2011b}. However, in this paper,  much more
general versions of these stability results are presented in Theorems 
\ref{min:result} - \ref{min:result:clo2.1} and Corollaries
\ref{min:result:clo3}, \ref{min:result:clo2},
which allow for the existence of free body motion in some  but not
all input-output channels. This is important since multivariable
control systems involving flexible structures with free body motion
usually include free body motion in some but not all input-output
channels. %This includes the  case study considered in Section
%\ref{sec:example} which involves the control of a flexible robotic
%arm with a piezo-electric actuator and sensor along with a torque
%actuator and hub angle sensor.
Also, this paper includes a case  study involving the control of a flexible robotic arm, which  has not been considered  in the previous conference
versions of the paper.%The
%results presented in \cite{Mabrok2012} are equivalent to Corollary
%\ref{min:result:clo2} %and Corollary \ref{min:result:clo4}
%in this
%paper, which are special cases of the main results reported here for
%the first time.

This paper  is further organized as follows: Section
\ref{sec:preliminaries} recalls  the  existing  definition for NI
systems and outlines the notation  that will be used in the rest of
the paper. Section \ref{sec:main-results} introduces the     new
generalized definition for NI systems, which allows for  systems
with free body dynamics. Also in this section, we present the  main
stability results in Theorems \ref{min:result} - \ref{min:result:clo2.1} and Corollaries
\ref{min:result:clo3}-\ref{min:result:clo2}. Section
\ref{sec:example} presents  a case study, which involves  a flexible
robotic arm, as an application of the NI theory presented in this
paper. The paper   is concluded with a summary and remarks on future
work in Section \ref{sec:conclusion}. All proofs of the presented
theorems, lemmas and corollaries are given in the Appendix.

\section{Preliminaries and Notation }
\label{sec:preliminaries}

In this section, we recall  the  existing   definitions of NI and
SNI systems as given in \cite{xiong21010jor} for systems with poles
in the closed  left half of the complex plane, except at the origin.
We will use this existing definition of SNI  (first introduced in
\cite{lanzon2008}) systems but in the next section we will present
our new definition of generalized NI systems. We also define
notation used to describe positive feedback interconnections and
internal stability, which will be used to present  the main results
in this paper.

Consider the following LTI system,
\begin{align}
\label{eq:xdotn}
&\dot{x}(t) = A x(t)+B u(t), \\
\label{eq:yn} &y(t) = C x(t)+D u(t),
\end{align}%
where $A \in \mathbb{R}^{n \times n},B \in \mathbb{R}^{n \times m},C
\in \mathbb{R}^{m \times n},$ $D \in \mathbb{R}^{m \times m},$ and
with the  square transfer function matrix $G(s)=C(sI-A)^{-1} B+D$.
The transfer function matrix $G(s)$ is said to be strictly proper if
$G(\infty)=D=0$. We will use the notation $
\begin{bmatrix}
\begin{array}{c|c}
A & B \\ \hline C & D
\end{array}
\end{bmatrix}$ to denote the state space realization
\eqref{eq:xdotn}, \eqref{eq:yn}.

%\begin{definition}\label{Old:Df:NI}
 The existing definition of NI systems states that a square transfer function matrix $G(s)$ is NI if the following conditions are satisfied \cite{xiong21010jor}:
\begin{enumerate}
\item[1.] $G(s )$ has no pole at the origin and in $Re[s]>0$.
\item[2.] The corresponding frequency response $G(j\omega )$ is such that
\begin{equation*}
    j\left( G(j\omega )-G(j\omega )^{\ast }\right) \geq 0,
\end{equation*} for all $\omega >0$ where $j\omega$ is not a pole of $G(s
)$.
\item[3.] If $j\omega_{0}$ with $\omega_0>0$ is a pole of $G(s)$, it is at most a simple pole and the residue matrix $K_{0}= \lim_{ j\rightarrow j\omega_{0}}(s-j\omega_{0})sG(s)$ is positive semidefinite Hermitian.
\end{enumerate}
%\end{definition}

\begin{definition}\cite{xiong21010jor}
A square transfer function matrix $G(s)$ is SNI if  the
following conditions are satisfied:
\begin{enumerate}
\item ${G}(s)$ has no pole in $Re[s]\geq0$.
\item For all $\omega >0$, $j\left( {G}(j\omega )-{G}(j\omega )^{\ast }\right) > 0$.
\end{enumerate}
\end{definition}

Now, consider a positive feedback interconnection between an NI
system  with transfer function matrix  $G(s)$ and an SNI system with
transfer function matrix $\bar{G}(s)$ as shown in Fig.
\ref{conn:NI:SNI}. Also, suppose that the transfer function matrix
$G(s)$ has a minimal state space realization  $
\begin{bmatrix}
\begin{array}{c|c}
A & B \\ \hline C & D
\end{array}
\end{bmatrix},$ and $\bar{G}(s)$ has a minimal state space realization  $
\begin{bmatrix}
\begin{array}{c|c}
\bar{A} & \bar{B} \\ \hline \bar{C} & \bar{D}
\end{array}
\end{bmatrix}.$ Furthermore, it is assumed that the matrix $I-D\bar{D}$   is nonsingular. Then the closed system has a system matrix given by

\begin{small}
\begin{align}
\breve{A} =\begin{bmatrix} A+B\bar{D}(I-D\bar{D})^{-1}C &
B\bar{C}+B\bar{D}(I-D\bar{D})^{-1}D\bar{C} \\
\bar{B}(I-D\bar{D})^{-1}C & \bar{A}+\bar{B}(I-D\bar{D})^{-1}D\bar{C}%
\end{bmatrix}.%
\label{mat:A:CL}%\\
%\breve{B} =&%
%\begin{bmatrix}
%B\bar{D} \\
%\bar{B}%
%\end{bmatrix}%
%(I-D\bar{D})^{-1}, \nonumber\\
%\breve{C} =&(I-D\bar{D})^{-1}%
%\begin{bmatrix}
%C & D\bar{C}%
%\end{bmatrix},\nonumber
%\\ \nonumber
%\breve{D} =&(I-D\bar{D})^{-1}.\nonumber
\end{align}
\end{small}%
Moreover, the positive feedback interconnection between $G(s)$ and
$\bar{G}(s)$ as shown in Fig. \ref{conn:NI:SNI} and denoted
$[G(s),\bar{G}(s)]$ is said to be  internally stable if  the
closed-loop system  matrix $\breve{A}$ in \eqref{mat:A:CL} is
Hurwitz; e.g., see \cite{Glover1996}.

\section{Main results}
\label{sec:main-results}

The main contribution  of this paper   is
 a generalization of the    framework for NI systems
presented in \cite{xiong21010jor}. We  introduce a new definition of
NI systems that will allow for systems with  free body dynamics.
%This is achieved by relaxing the condition requiring  the absence of
%poles at the origin in the NI definition presented in
%\cite{xiong21010jor}. Secondly,
This generalized  definition will be
used in  a new set of stability conditions that will allow for   NI
systems with free body motion to be included into the framework of
NI systems theory. Henceforth, when a system is said to be NI, we
will mean NI as defined below, not NI as defined in earlier papers.
%and recalled in the text description above.

\begin{definition}\label{Def:NI}
A square transfer function matrix $G(s)$ is NI if the following
conditions are satisfied:
\begin{enumerate}
\item $G(s)$ has no pole in $Re[s]>0$.
\item For all $\omega >0$ such that $j\omega$ is not a pole of $G(s
)$,
$
    j\left( G(j\omega )-G(j\omega )^{\ast }\right) \geq 0.
$
\item If $s=j\omega _{0}$ with $\omega _{0}>0$ is a pole of $G(s)$, then it is a simple pole and the residue matrix $K=\underset{%
s\longrightarrow j\omega _{0}}{\lim }(s-j\omega _{0})jG(s)$ is
Hermitian and  positive semidefinite.
 \item If $s=0$ is a pole of $G(s)$, then
$\underset{s\longrightarrow 0}{\lim }s^{k}G(s)=0$ for all $k\geq3$
and $\underset{s\longrightarrow 0}{\lim }s^{2}G(s)$ is Hermitian and
positive semidefinite.
\end{enumerate}
\end{definition}
Here, $G(j\omega )$ is the  frequency response corresponding to the
transfer function $G(s )$.
%\begin{equation*}
%    j\left( G(j\omega )-G(j\omega )^{\ast }\right) \geq 0,
%\end{equation*} for all $\omega >0$ where $j\omega$ is not a pole of $G(s
%)$.
%
%\begin{remark}
%Note that the number of poles at the origin that it is allowed here
%is maximum two poles. This is due to the fact that we more than two
%poles at the origin will have more than $180$ phase shift.
%
%\end{remark}
Unlike the NI definition  presented in \cite{xiong21010jor},
Definition \ref{Def:NI} allows for  poles at the origin. In this
case, we cannot use the existing stability results presented in
\cite{petersen2010,xiong21010jor,song2010},  because the stability
condition in \eqref{DC:ian:alex:cond} is not  defined.
 The inclusion of poles at the origin extends the NI systems theory
to include  flexible systems with free body dynamics. 
In order to derive a  set of stability conditions that allow for NI
systems with free body motion, we define the following constant
matrices for a given $m\times m$ NI transfer function matrix $G(s):$
\begin{align}
G_2&=\underset{s\longrightarrow 0}{\lim }s^2G(s),\nonumber \\
G_1&=\underset{s\longrightarrow 0}{\lim }s \left(G(s)-\frac{G_2}{s^2}\right),\nonumber \\
G_0&=\underset{s\longrightarrow
0}{\lim}\left(G(s)-\frac{G_2}{s^2}-\frac{G_1}{s}\right)
\label{f_G0}.
\end{align}

These matrices  are the first three  coefficients in the Laurent
series expansion of the transfer function $G(s)$. These matrices
carry information about properties of the free body motion  of   the
system under consideration and  will be used in stability conditions
for the positive feedback interconnection of NI and SNI systems.
Note that the DC gain condition \eqref{DC:ian:alex:cond} cannot be
defined for an NI system with transfer function matrix $G(s)$ unless
$G_2=G_1=0$, which reduces to the case where the dynamical system
has no free body motion.
 From Condition 4) in   Definition \ref{Def:NI},   the matrix
$G_2$ is required  to be Hermitian and positive semidefinite.
 Hence, it follows (e.g., see \cite{Koeber2006}) that if $G_2\neq0$, it  can be decomposed
 in the form
 \begin{align}
 G_{2}=JJ^{T},\label{JJ-m}
 \end{align}
 where $J $ is a full column rank matrix.

We now  present   conditions for the stability of a positive
feedback control system involving an NI plant with  free body
motion. These conditions are stated using the quantities defined in
\eqref{f_G0}. First, we define the $2m\times 2m$
Hankel matrix $\Gamma$ as
\begin{align}
 \Gamma=\begin{bmatrix} G_1 & G_2
\\G_2 &0
\end{bmatrix}.\label{Hankel_m}
\end{align}
Suppose that $\Gamma\neq0$. Using the  singular value decomposition
(SVD), we can decompose the Hankel matrix $\Gamma$ as
\begin{align}
 \Gamma&=\begin{bmatrix}H_1&H_2
\end{bmatrix}\begin{bmatrix}S&0
\\ 0&0
\end{bmatrix}\begin{bmatrix} V_1^T
\\ V_2^T
\end{bmatrix}\notag\\&=H_1SV_1^T=UV_1^T=\begin{bmatrix} U_1
\\U_2
\end{bmatrix}V_1^T,\label{SVD-H-1}
\end{align}
where $\begin{bmatrix}H_1&H_2
\end{bmatrix},\begin{bmatrix} V_1^T
\\ V_2^T
\end{bmatrix}$ are unitary matrices, $S>0, U=H_1S\in \mathbb{R}^{2m \times \tilde{n}}$, $U_1\in
\mathbb{R}^{m \times \tilde{n}}$, $U_2\in \mathbb{R}^{m \times
\tilde{n}}$ and the matrices $U$ and $V_1$ each have orthogonal columns.
Furthermore, we can decompose the $\tilde{n}\times \tilde{n}$ matrix
$U_{1}^T U_2$ using the  SVD as
\begin{align}
U_{1}^T U_2=\hat{U} \hat{S}\hat{V}^T=\hat{U}\begin{bmatrix}S_1&0
\\ 0&0
\end{bmatrix}\begin{bmatrix} \hat{V_1}^T
\\ \hat{V_2}^T
\end{bmatrix}\label{SVD-uu},
\end{align}
where  $\hat{U}\in \mathbb{R}^{\tilde{n} \times \tilde{n}}$  and
$\hat{V}\in \mathbb{R}^{\tilde{n} \times \tilde{n}}$ are orthogonal
matrices, $\hat{V_2}\in \mathbb{R}^{\tilde{n} \times \check{n}}$ and
$S_1>0$.

We now introduce some  notation which will be used throughout the
paper. Given matrices $X\in \mathbb{R}^{m \times m}$ and $Y \in
\mathbb{R}^{m\times \check{n}}$ such that $\det(Y^{T}XY)\neq0$, then
the matrix valued function $\mathcal{P}(X,Y)$ is defined by
\begin{align}
\mathcal{P}(X,Y)\triangleq X-XY\left( Y^{T}XY\right) ^{-1}Y^{T}%
X.\label{newP-N-f-m}
 \end{align}
Using this notation, we define the matrix
 \begin{align}N_{f}=\mathcal{P}(\bar{G}(0),F)
,\label{N-f-m}
 \end{align}
 where the $m\times \check{n}$ matrix $F$  is given by
\begin{align}\label{F-SVD1}
F=U_{1}\hat{V_2},
\end{align} and we will assume that $\det(F^{T}\bar{G}(0)F)\neq0$.

We will use the following condition in the theorem which follows:
\begin{equation}\label{Eq:Con1:th1}
    F^{T}\bar{G}(0)F<0.
\end{equation}
Also, for the case in which $N_f$ is positive semidefinite, we will
use the condition
\begin{equation}\label{Eq:Con2:th1}
    I-N_{f}^{\frac{1}{2}}G_{0}N_{f}^{\frac{1}{2}}-N_{f}^{\frac{1}{2}}G_{1}J(J^{T}J)^{-2}J^{T}G_{1}^{T}N_{f}^{\frac{1}{2}}>0.
\end{equation}
Moreover, for the case in which $N_f$ is negative  semidefinite, we
will use the condition
\begin{equation}\label{Eq:Con3:th1}
    \det(I+\tilde{N}_{f}G_{0}\tilde{N}_{f}+\tilde{N}_{f}G_{1}J(J^{T}J)^{-2}J^{T}G_{1}^{T}\tilde{N}_{f})\neq0.
\end{equation}
Here, $\tilde{N}_{f}=(-N_f)^{\frac{1}{2}}$ and  matrices
$G_1,G_0,J,N_f$ and $F$ are defined in \eqref{f_G0},
\eqref{JJ-m}, \eqref{N-f-m}, and \eqref{F-SVD1} respectively.  Also,
$(\cdot)^{\frac{1}{2}}$ denotes the square root of a positive
semidefinite matrix.

%========================
%========================
%========================
%========================
%========================
%Theorem 11111111111111
%========================
%========================
%========================
%========================
%========================

The following theorem is our first  main stability result for the
case in which $G_2\neq0$.  That is, the system has double poles at the origin. %This corresponds to the case of plants
%which have free body motion without friction being present.

\begin{theorem}\label{min:result}
Suppose that the square transfer function matrix $G(s)$ is strictly
proper and NI with $G_2\neq0$, and the transfer function matrix
$\bar{G}(s)$ is SNI. Also, suppose that the matrix
$F^{T}\bar{G}(0)F$ is  non-singular. If $N_f$ is positive
semidefinite, then the closed-loop positive-feedback interconnection
between $G(s)$ and $\bar{G}(s)$ as shown in  Fig.
 \ref{conn:NI:SNI} is internally
stable   if and only if conditions \eqref{Eq:Con1:th1} and
\eqref{Eq:Con2:th1} are satisfied. Furthermore, if $N_f$ is negative
semidefinite, then the closed-loop positive-feedback interconnection
between $G(s)$ and $\bar{G}(s)$ is internally stable   if and only
if conditions \eqref{Eq:Con1:th1} and \eqref{Eq:Con3:th1} are
satisfied.
%\begin{enumerate}
%\item $I-\tilde{N}_{f}G_{0}\tilde{N}_{f}-\tilde{N}_{f}G_{1}J(J^{T}J)^{-2}J^{T}G_{1}^{T}\tilde{N}_{f}>0,$
%\item $F^{T}\bar{G}(0)F<0,$
%%\item $\det(A+B\bar{G}(0)C)\neq0.$
%\end{enumerate}
\end{theorem}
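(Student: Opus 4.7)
The plan is to reduce the free-body case to the existing NI stability framework of \cite{xiong21010jor} by constructing a state-space realization of the closed loop that cleanly separates the integrator modes of $G(s)$ at $s=0$ from the stable modes, and then translating the resulting finite-dimensional Hurwitz condition back to the transfer-function conditions \eqref{Eq:Con1:th1}--\eqref{Eq:Con2:th1}. Since $G(s)$ is NI with $G_{2}\ne 0$, a minimal realization can be put into a block-triangular form where a nilpotent Jordan block $A_{0}$ governs the free-body dynamics and the remainder is Hurwitz. The input/output matrices of that block are determined precisely by the Laurent coefficients $G_{2},G_{1}$ via the Hankel matrix \eqref{Hankel_m} and its SVD in \eqref{SVD-H-1}, which is why the matrix $F=U_{1}\hat{V}_{2}$ from \eqref{F-SVD1} is the natural object for isolating the integrator directions.

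Working with the closed-loop state matrix $\breve{A}$ in \eqref{mat:A:CL}, I would split the spectrum of $\breve{A}$ into eigenvalues inherited from the stable part of $G$ interacting with $\bar{G}$ and eigenvalues arising from the interaction of the free-body block with $\bar{G}(0)$. For the first group, the standard NI/SNI Lyapunov argument from \cite{xiong21010jor} applies essentially unchanged, using a KYP-type storage function for the stable NI part of $G$ together with the strict inequality $j(\bar{G}(j\omega)-\bar{G}(j\omega)^{\ast})>0$ for $\bar{G}$. For the second group, I would apply a Schur-complement reduction centered on $\bar{G}(0)$: the assumption that $F^{T}\bar{G}(0)F$ is nonsingular is precisely what permits the reduction, and it delivers the residual DC map $N_{f}=\mathcal{P}(\bar{G}(0),F)$ in \eqref{N-f-m}. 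Condition \eqref{Eq:Con1:th1} is what forces the resolved integrator eigenvalues to the open left half plane, while \eqref{Eq:Con2:th1} or \eqref{Eq:Con3:th1}, depending on the sign of $N_{f}$, plays the role of the classical DC gain condition $\lambda_{\max}(G(0)\bar{G}(0))<1$ used in \cite{lanzon2008,petersen2010,xiong21010jor}, the term $G_{1}J(J^{T}J)^{-2}J^{T}G_{1}^{T}$ being the contribution of the single-pole residue $G_{1}$ in the direction of the double-pole residue $G_{2}=JJ^{T}$. Necessity is obtained by exhibiting, for each violated inequality, either a nonzero kernel direction of $\breve{A}$ or a right-half-plane eigenvector.

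The main obstacle is matching the Hankel/SVD construction to the Jordan structure of the integrators in a coordinate-free way so that the Schur complement produces exactly the matrix $J(J^{T}J)^{-2}J^{T}$ that appears in \eqref{Eq:Con2:th1}--\eqref{Eq:Con3:th1}; this requires carefully distinguishing the $\tilde{n}$-dimensional "double-pole" subspace from its $\check{n}$-dimensional sub-subspace that interacts with $G_{1}$. A secondary difficulty is the split between the $N_{f}\ge 0$ and $N_{f}\le 0$ branches: when $N_{f}\ge 0$, a genuine Lyapunov inequality in the factorized form involving $N_{f}^{1/2}$ gives the strict positivity in \eqref{Eq:Con2:th1}; when $N_{f}\le 0$, the Lyapunov candidate loses sign-definiteness and one instead has to rule out imaginary-axis eigenvalues of $\breve{A}$ directly, which is why the weaker determinant condition \eqref{Eq:Con3:th1} suffices and is also necessary. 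Both branches then combine into the statement as given.
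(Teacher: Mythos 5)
Your overall strategy --- put $G$ into a block form that isolates the integrator chain, certify the dynamics with a KYP-type storage function, and extract $N_f$ by a Schur complement on $\bar G(0)$ in the $F$-directions --- is the route the paper actually takes (Theorem \ref{Pre_main_re} in Appendix A followed by the Hankel/SVD translation in Appendix B). However, two steps of your plan would fail as stated, and the load-bearing technical ingredients are missing. First, you cannot ``split the spectrum of $\breve A$'' into eigenvalues inherited from the stable part of $G$ and eigenvalues arising from the free-body block: the feedback through $\bar G$ couples all blocks, so $\breve A$ in \eqref{mat:A:CL} does not decouple and no such spectral separation exists. The paper instead builds a \emph{single} Lyapunov candidate $T$ for the entire closed loop, assembled from a plant storage matrix $P\ge 0$, the controller storage matrix $\bar P>0$, and the cross terms $-C^{T}\bar D C$ and $-C^{T}\bar C$. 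Crucially, $P$ is only positive \emph{semidefinite}: it vanishes on the simple-integrator states and on half of the double-integrator states, so the ``standard argument applies essentially unchanged'' remark skips precisely the hard part --- conditions \eqref{Eq:Con1:th1} and (in the $N_f\ge 0$ branch) \eqref{Eq:Con2:th1} are exactly what compensate for these zero blocks and make $T>0$. The storage matrices $P_1>0$, $P_2>0$ and the identity $P_2B_{3b}=C_{3a}^{T}$ come from showing that $R(s)=sG(s)$ is positive real and applying the KYP lemma (Lemma \ref{NI-lemma-jordon-f}); without that step the term $G_1J(J^{T}J)^{-2}J^{T}G_1^{T}$ in \eqref{Eq:Con2:th1} cannot be produced.

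Second, your description of the two branches is inverted. When $N_f\le 0$ the Lyapunov candidate does \emph{not} lose definiteness: under \eqref{Eq:Con1:th1} the relevant Schur-complement block of $T$ is automatically positive definite, so $T>0$ with no further hypothesis. In \emph{both} branches the inequality $T\breve A+\breve A^{T}T\le 0$ only confines the spectrum to the closed left half plane; eigenvalues at $j\omega$ with $\omega\ne 0$ are excluded by $\det(I-G(j\omega)\bar G(j\omega))\ne 0$ (Lemma \ref{math4}), and a zero eigenvalue is excluded by computing $\det\breve A=\det\bar A\,\det(A+B\bar G(0)C)$ and reducing it, via the invertibility of $F^{T}\bar G(0)F$ and the relation $(B_{3a}+P_2^{-1}C_{3b}^{T})N^{1/2}=0$, to the determinant of the matrix appearing in \eqref{Eq:Con2:th1} (respectively \eqref{Eq:Con3:th1}). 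In the $N_f\ge 0$ branch that determinant is nonzero because the matrix is positive definite; in the $N_f\le 0$ branch it must be assumed, which is the sole role of \eqref{Eq:Con3:th1}. Finally, necessity does not require exhibiting unstable or marginal eigenvectors, which would be difficult to do for each violated inequality: if $\breve A$ is Hurwitz, the inertia argument (Lemma 3.19 of \cite{Glover1996}) applied to $T\breve A+\breve A^{T}T\le 0$ forces $T>0$, which yields \eqref{Eq:Con1:th1} and \eqref{Eq:Con2:th1}, while $\det\breve A\ne 0$ yields \eqref{Eq:Con3:th1}. As written, your proposal leaves the equivalence ``$T>0\iff$ conditions,'' the positive-realness of $sG(s)$, and the $s=0$ determinant computation unproved, and these are the three steps on which the theorem actually rests.
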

The proof of this and subsequent theorems and corollaries  are  presented in Appendix B.%\ref{appB}.

%Theorem \ref{min:result} gives a    stability result for  the
%positive-feedback interconnection of an NI  system including  free
%body motion and an SNI system for the case in which $G_2\neq0$.
 We
now  present a corollary to this theorem which considers the special
case in which none of the free body modes of the plant have
frictional force  present; i.e., $G_1=0$. In order to present this
corollary, we define the matrix $N_2$ as follows:
\begin{align}
 N_{2}=\mathcal{P}(\bar{G}(0),J),%\bar{G}(0)-\bar{G}(0)J\left( J^{T}\bar{G}(0)J\right) ^{-1}J^{T}%
%\bar{G}(0),
\label{N-2-g10}
\end{align}
where we assume that the matrix $J^{T}\bar{G}(0)J$ is non-singular.

%\begin{assumption}
%\label{assm:siso2:crol3} The matrix the matrix $J^{T}\bar{G}(0)J$ is
%assumed to be non-singular and the matrix $N_2$ in \eqref{N-2-g10}
%is assumed to be either positive semidefinite or negative
%semidefinite.
%\end{assumption}

%\begin{remark}
%\label{rem:1:crol2}
%Under this assumption, we define a complex matrix $\tilde{N}_{2}$  so that
%\begin{equation}\label{mode:Nf:crol2}
% \tilde{N}_{2}^2=N_2.
%\end{equation}
% In the case of which $N_2\geq0$, then $\tilde{N}_{2}=N_2^{\frac{1}{2}}$. In the case of which $N_2\leq0$, then $\tilde{N}_{2}=i(-N_2)^{\frac{1}{2}}$.
%\end{remark}
We will use the following condition in the next corollary, which
 corresponds to condition
\eqref{Eq:Con1:th1} in Theorem \ref{min:result}:
\begin{equation}\label{Eq:Con1:crol3}
    J^{T}\bar{G}(0)J<0.
\end{equation}
Also, for the case in which $N_2$ is positive semidefinite, we will
use the following  condition which corresponds to condition
\eqref{Eq:Con2:th1} in Theorem \ref{min:result}:
\begin{equation}\label{Eq:Con2:crol3}
    I-N_{2}^{\frac{1}{2}}G_{0}N_{2}^{\frac{1}{2}}>0.
\end{equation}
Moreover, for the case in which $N_2$ is negative  semidefinite, we
will use the following  condition which corresponds to condition
\eqref{Eq:Con3:th1} in Theorem \ref{min:result}:
\begin{equation}\label{Eq:Con3:crol3}
    \det(I+\tilde{N}_{2}G_{0}\tilde{N}_{2})\neq0,
\end{equation}
where $\tilde{N}_{2}=(-N_2)^{\frac{1}{2}}$.

%========================
%========================
%========================
%========================
%========================
%Corollary 11111111111111
%========================
%========================
%========================
%========================
%========================

\begin{corollary}\label{min:result:clo3}
Suppose that the transfer function matrix  $\bar{G}(s)$ is SNI and
 the strictly proper transfer function matrix  $G(s)$
is  NI with $G_1=0$ and $G_2\neq0$. Also, suppose that the matrix
$J^{T}\bar{G}(0)J$ is  non-singular.  If $N_2$ is positive
semidefinite, then the closed-loop positive-feedback interconnection
between $G(s)$ and $\bar{G}(s)$  is internally stable   if and only
if conditions \eqref{Eq:Con1:crol3} and \eqref{Eq:Con2:crol3} are
satisfied. Furthermore, if $N_2$ is negative semidefinite, then the
closed-loop positive-feedback interconnection between $G(s)$ and
$\bar{G}(s)$ is internally stable   if and only if conditions
\eqref{Eq:Con1:crol3} and \eqref{Eq:Con3:crol3} are satisfied.
\end{corollary}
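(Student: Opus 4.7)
The plan is to derive Corollary \ref{min:result:clo3} as a direct specialization of Theorem \ref{min:result} upon setting $G_{1}=0$. The essential observation is that the matrix $F$ (built from the two nested SVDs of the Hankel matrix $\Gamma$) and the matrix $J$ (from the factorization $G_{2}=JJ^{T}$) have the same column space in this case, so every quantity in Theorem \ref{min:result} collapses to its counterpart in the corollary, with the $G_{1}$-cross-terms vanishing identically.

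First, with $G_{1}=0$ the Hankel matrix \eqref{Hankel_m} reduces to the block anti-diagonal form $\Gamma=\left[\begin{smallmatrix} 0 & G_{2} \\ G_{2} & 0\end{smallmatrix}\right]$. Using the spectral decomposition $G_{2}=V\Lambda V^{T}$ with $V\in\mathbb{R}^{m\times r}$ having orthonormal columns, $\Lambda\in\mathbb{R}^{r\times r}$ positive diagonal, and $r=\operatorname{rank}G_{2}$, the eigenvectors of $\Gamma$ are seen to be $\tfrac{1}{\sqrt{2}}[v_{i}^{T},\pm v_{i}^{T}]^{T}$ with eigenvalues $\pm\lambda_{i}$. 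This yields a closed-form SVD \eqref{SVD-H-1} from which I would read off $U_{1}$ and $U_{2}$, then form the SVD \eqref{SVD-uu} of $U_{1}^{T}U_{2}$, which also has an explicit block form. A short bookkeeping calculation then gives $\hat{V}_{2}=\tfrac{1}{\sqrt{2}}[I_{r};I_{r}]$ and
\begin{equation*}
F=U_{1}\hat{V}_{2}=V\Lambda.
\end{equation*}
Since any full-column-rank $J$ with $G_{2}=JJ^{T}$ differs from $V\Lambda^{1/2}$ only by a right orthogonal factor, we obtain $F=JR$ for a square invertible $R$, so $F$ and $J$ have identical ranges.

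Second, I would invoke the invariance of the matrix function $\mathcal{P}$ under right multiplication of its second argument by an invertible matrix: for any invertible $R$,
\begin{equation*}
\mathcal{P}(X,YR)=X-XYR(R^{T}Y^{T}XYR)^{-1}R^{T}Y^{T}X=\mathcal{P}(X,Y),
\end{equation*}
which at $X=\bar{G}(0)$, $Y=J$ gives $N_{f}=\mathcal{P}(\bar{G}(0),F)=\mathcal{P}(\bar{G}(0),J)=N_{2}$. The same congruence $F^{T}\bar{G}(0)F=R^{T}J^{T}\bar{G}(0)JR$, combined with Sylvester's law of inertia, shows that $F^{T}\bar{G}(0)F$ is non-singular (resp.\ negative definite) if and only if $J^{T}\bar{G}(0)J$ is non-singular (resp.\ negative definite). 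Hence the non-singularity hypothesis and condition \eqref{Eq:Con1:th1} reduce exactly to \eqref{Eq:Con1:crol3}, and the positive/negative-semidefiniteness of $N_{f}$ coincides with that of $N_{2}$.

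Third, substituting $G_{1}=0$ directly into \eqref{Eq:Con2:th1} and \eqref{Eq:Con3:th1} makes the cross-terms $N_{f}^{1/2}G_{1}J(J^{T}J)^{-2}J^{T}G_{1}^{T}N_{f}^{1/2}$ and $\tilde{N}_{f}G_{1}J(J^{T}J)^{-2}J^{T}G_{1}^{T}\tilde{N}_{f}$ vanish, leaving precisely \eqref{Eq:Con2:crol3} and \eqref{Eq:Con3:crol3} after using $N_{f}=N_{2}$. Internal stability of the closed loop in both the semidefinite cases then follows as an immediate consequence of Theorem \ref{min:result}. The main technical obstacle is the careful explicit SVD computation in the first step, since one must track the orthogonal complements precisely enough to identify $\hat{V}_{2}$ and hence $F$; however, because the ultimate conditions depend only on the column space of $F$ (via the $\mathcal{P}$-invariance), the argument is insensitive to the non-uniqueness of $J$ in the decomposition $G_{2}=JJ^{T}$, which is what makes the reduction clean.
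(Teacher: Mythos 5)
Your argument is correct, but it follows a genuinely different route from the paper's. The paper does not specialize Theorem \ref{min:result} at the transfer-function level; instead it drops back down to the state-space machinery of Appendix A and reduces the corollary to Corollary \ref{min:result:clo2-jour} (the case $n_2=0$, $k\neq0$ of the Jordan-form realization). There, the identification of $N$ with $N_2$ is a one-line application of Lemma \ref{matrix-lemma} to $C_{3a}B_{3b}=JJ^{T}$, giving $C_{3a}=JX$ with $X$ invertible; and the extra term $-N^{1/2}C_{3b}P_{2}^{-1}C_{3b}^{T}N^{1/2}$ appearing in \eqref{Eq:Con2:crol7} is shown to vanish because $G_1=C_{3a}B_{3a}+C_{3b}B_{3b}=0$, $NC_{3a}=0$, and $B_{3b}$ has full row rank force $NC_{3b}=0$. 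You instead stay entirely at the level of Theorem \ref{min:result}: the explicit eigendecomposition of the anti-diagonal Hankel matrix $\Gamma=\left[\begin{smallmatrix}0 & G_2\\ G_2 & 0\end{smallmatrix}\right]$ yields $F=V\Lambda$, hence $F=JR$ with $R$ invertible, and the invariance $\mathcal{P}(X,YR)=\mathcal{P}(X,Y)$ together with congruence gives $N_f=N_2$ and the equivalence of \eqref{Eq:Con1:th1} with \eqref{Eq:Con1:crol3}; the $G_1$ cross-terms in \eqref{Eq:Con2:th1}--\eqref{Eq:Con3:th1} then vanish identically. Both routes are sound. What yours buys is a clean, realization-free derivation of the corollary from the already-stated theorem, with the range-space/$\mathcal{P}$-invariance observation neatly disposing of the non-uniqueness of both the SVD in \eqref{SVD-H-1}--\eqref{SVD-uu} and the factor $J$ in \eqref{JJ-m}; the cost is the explicit SVD bookkeeping, which is exactly the fiddly step the paper sidesteps by working with the realization where $C_{3a}=JX$ is immediate. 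Note also that your reduction presupposes Theorem \ref{min:result}, which the paper itself proves through the same state-space apparatus, so the two proofs ultimately rest on the same foundations; as a derivation of the corollary from the theorem, however, yours is complete.
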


%========================
%========================
%========================
%========================
%========================
%Corollary 22222222222222
%========================
%========================
%========================
%========================
%========================
The following theorem imposes some extra conditions on the matrix
$G_2$ which enables us to relax the sign definiteness condition on
the matrix $N_2$. This then leads to a simplified stability
condition.
\begin{theorem}\label{min:result:clo3.1}
Suppose that the transfer function matrix  $\bar{G}(s)$ is SNI and
 the strictly proper transfer function matrix  $G(s)$
is  NI with $G_1=0$ and $G_2\neq0$. Also,  suppose that
$\mathcal{N}(G_2)\subseteq\mathcal{N}(G_0^T)$, where
$\mathcal{N}(\cdot)$ denotes the null space of a matrix. Then the
closed-loop positive-feedback interconnection between $G(s)$ and
$\bar{G}(s)$ is internally stable if and only if condition
\eqref{Eq:Con1:crol3}  is satisfied.  %\hfill $\blacksquare$
\end{theorem}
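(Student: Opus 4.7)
The strategy is to bootstrap from Corollary~\ref{min:result:clo3} by showing that the null-space hypothesis $\mathcal{N}(G_2)\subseteq\mathcal{N}(G_0^T)$ makes the auxiliary conditions \eqref{Eq:Con2:crol3} and \eqref{Eq:Con3:crol3} tautological, so that the stability equivalence collapses to the single DC-gain condition \eqref{Eq:Con1:crol3}.

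First I would translate the null-space condition into a range condition on $G_0$. Since $J$ has full column rank, $\mathcal{N}(G_2)=\mathcal{N}(JJ^T)=\mathcal{N}(J^T)$, so the hypothesis is equivalent to $\mathrm{range}(G_0)\subseteq\mathrm{range}(J)$ and yields a factorization $G_0=JM$ for some matrix $M$. Next, from the definition
\begin{equation*}
N_2=\mathcal{P}(\bar{G}(0),J)=\bar{G}(0)-\bar{G}(0)J\left(J^T\bar{G}(0)J\right)^{-1}J^T\bar{G}(0),
\end{equation*}
direct substitution gives $N_2 J=0$, whence $N_2 G_0=N_2 J M=0$. When $N_2\geq 0$, the square-root identity $\mathcal{N}(N_2^{1/2})=\mathcal{N}(N_2)$ forces $N_2^{1/2}G_0=0$, and hence $N_2^{1/2}G_0 N_2^{1/2}=0$, so \eqref{Eq:Con2:crol3} collapses to $I>0$; the analogous argument applied to $-N_2$ when $N_2\leq 0$ gives $\tilde{N}_2 G_0\tilde{N}_2=0$, so \eqref{Eq:Con3:crol3} collapses to $\det I\neq 0$. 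In either sign-definite case, Corollary~\ref{min:result:clo3} immediately yields the desired equivalence between internal stability and \eqref{Eq:Con1:crol3}.

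The main obstacle is that Theorem~\ref{min:result:clo3.1} does not assume sign-definiteness of $N_2$, nor even non-singularity of $J^T\bar{G}(0)J$. For sufficiency this is benign since \eqref{Eq:Con1:crol3} itself supplies the non-singularity. For necessity, stability forces $J^T\bar{G}(0)J$ to be invertible---otherwise the positive feedback loop cannot cancel the double pole of $G(s)$ at the origin---so $N_2$ is again well defined. To cover the remaining possibility that $N_2$ is indefinite, my plan is a continuity/perturbation argument: replace $\bar{G}(0)$ by $\bar{G}(0)+\epsilon E$ with a small symmetric $E$ making the perturbed Schur complement strictly sign-definite while preserving both $J^T\bar{G}(0)J<0$ and the identity $N_2^{\epsilon}G_0=0$ (the latter persists because $G_0=JM$ is independent of $\epsilon$). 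Applying Corollary~\ref{min:result:clo3} to the perturbed pair and letting $\epsilon\to 0$, using the openness of Hurwitz stability and of strict inequalities, closes the argument. A more direct alternative is to revisit the state-space proof of Theorem~\ref{min:result}: every occurrence of $N_2^{1/2}$ there multiplies $G_0$, and $N_2 G_0=0$ makes those terms vanish identically, so the underlying Lyapunov construction goes through without any sign restriction on $N_2$.
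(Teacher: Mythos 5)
Your reduction for the sign-definite cases is correct and mirrors the paper: from $\mathcal{N}(G_2)\subseteq\mathcal{N}(G_0^T)$ you get $\mathcal{R}(G_0)\subseteq\mathcal{R}(J)$, hence $G_0=JM$ and $N_2G_0=N_2JM=0$, which trivializes \eqref{Eq:Con2:crol3} and \eqref{Eq:Con3:crol3}. The genuine gap is in how you dispose of the case where $N_2$ is indefinite. Your primary device, perturbing $\bar{G}(0)$ by a small symmetric $\epsilon E$ to make the Schur complement ``strictly sign-definite,'' cannot work: eigenvalues depend continuously on the matrix, so if $N_2$ has both a positive and a negative eigenvalue, every sufficiently small perturbation of it still does; moreover $N_2^\epsilon J=0$ for every $\epsilon$, so $N_2^\epsilon$ always has a nontrivial kernel and can never be made definite. (There is also the secondary difficulty that perturbing the scalar object $\bar{G}(0)$ is not the same as perturbing the SNI controller $\bar{G}(s)$, and a limiting argument on a two-sided ``if and only if'' with strict inequalities does not close cleanly.)

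Your fallback --- rerun the state-space proof and note that the offending terms vanish --- is the right idea and is essentially what the paper does, but as stated it skips the step that actually carries the load. In the Lyapunov argument the positivity condition is not phrased in terms of $N^{1/2}G_0N^{1/2}$ (that form is only reached \emph{after} a Schur complement step that presupposes $N\geq 0$); it is the condition $P_f-C_f^TNC_f>0$ with $C_f=\begin{bmatrix}C_1 & C_{3b}\end{bmatrix}$, and the determinant test likewise involves $C_1$ and $C_{3b}$ separately. To dispense with sign-definiteness you must upgrade $N_2G_0=0$ to $NC_1=0$ and $NC_{3b}=0$. The paper does this via its Corollary~\ref{Pre_main_re-G1-G2-0}: $NC_{3b}=0$ follows from $G_1=C_{3a}B_{3a}+C_{3b}B_{3b}=0$, $NC_{3a}=0$ and full rank of $B_{3b}$, while $NC_1=0$ requires the KYP-type identities of Lemma~\ref{NI-lemma-jordon-f} (specifically $B_1N=-A_1P_1^{-1}C_1^TN$, which uses $MN=0$) to convert $NC_1A_1^{-1}B_1=0$ into $NC_1P_1^{-1}C_1^TN=0$ and hence $NC_1=0$ since $P_1>0$. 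Without these two identities the Lyapunov candidate is not shown to be positive definite when $N_2$ is indefinite, so your proof as written does not establish the theorem in full generality.
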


%========================
%========================
%========================
%========================
%========================
%Corollary 3333333333333
%========================
%========================
%========================
%========================
%========================
%An important special case of the above theorems  is the following
%corollary.
%\begin{corollary}\label{min:result:clo4}
%Suppose that the transfer function matrix  $\bar{G}(s)$ is SNI and
% the strictly proper transfer function matrix  $G(s)$
%is  NI with $G_1=0$ and $G_2>0$.
% Then, the closed-loop positive-feedback interconnection
%between $G(s)$ and $\bar{G}(s)$ is internally stable if and only if
%$\bar{G}(0)<0.$
% %\hfill $\blacksquare$
%\end{corollary}

%We also present special cases of this theorem including the case when $G_2=0$
%in the form of Corollaries \ref{min:result:clo1}-\ref{min:result:clo5}, which are given below.
  %which  are simplifications of the main results in Theorem
%%\ref{min:result}. These can be  directly  applied in some cases such
%as applications where frictional  force is considered.Also, in the
%case when  the free body motion exists in all input-output
%directions.

%In the SISO case, as  in the example shown in Fig. \ref{vcm:fig},
%the effect of free body motion may  lead to   a double integrator in
%the plant model. However, if frictional force is present, then the
%effect of the free body motion leads to only  a single  integrator.
%In the general MIMO case, this corresponds to the situation in which
%$G_2=\underset{s\longrightarrow 0}{\lim}s^2G(s)=0$.
In Theorem \ref{min:result:clo1}, Theorem \ref{min:result:clo2.1}
and Corollary \ref{min:result:clo2}, we consider   cases which
correspond to free body motion with frictional force present. As in
Theorem \ref{min:result}, these   cases allow for fact that the free
body motion may  not be present in all input-output channels.

In order to present Theorem \ref{min:result:clo1} and  Theorem
\ref{min:result:clo2.1}, %and Corollary  \ref{min:result:clo2},
suppose that $G_1\neq0$ and $G_2=0$. This corresponds to  the  case when the
system has a single pole at the origin. Then we consider the
following SVD decomposition of the matrix $G_1$ defined in
\eqref{f_G0}:

\begin{align}
G_1=\begin{bmatrix}\tilde{F}_1&\tilde{F}_2
\end{bmatrix}\begin{bmatrix}S_2&0
\\ 0&0
\end{bmatrix}\begin{bmatrix} V_1^T
\\ V_2^T
\end{bmatrix}=F_1V_1^T,\label{C-D-1-f_G1}
\end{align}
where $S_2>0$, and the matrices  $F_1=\tilde{F}_1S_2$ and $V_1$ each have
orthogonal columns. Also, we define the matrix $N_1$ as follows:

\begin{align}
 N_{1}=\mathcal{P}(\bar{G}(0),F_1),%\bar{G}(0)-\bar{G}(0)F_1\left( F_{1}^{T}\bar{G}(0)F_1\right) ^{-1}F_{1}^{T}%
%\bar{G}(0),
 \label{N-1-g20}
\end{align}
where the matrix $F_{1}^{T}\bar{G}(0)F_1$ is assumed to be non-singular.

%\begin{assumption}
%\label{assm:siso2:crol1} We assume  that $G_1\neq0$. Also, the
%matrix the matrix $F_1^{T}\bar{G}(0)F_1$ is assumed to be
%non-singular and the matrix $N_1$ in \eqref{N-1-g20} is assumed to
%be either positive semidefinite or negative semidefinite.
%\end{assumption}

We will use the following condition in Theorem \ref{min:result:clo1}
and Corollary \ref{min:result:clo2}  which
 corresponds to condition
\eqref{Eq:Con1:th1} in Theorem \ref{min:result}:
\begin{equation}\label{Eq:Con1:crol1}
    F_1^{T}\bar{G}(0)F_1<0.
\end{equation}
For the case in which $N_1$ is positive semidefinite, we also will use
the following  condition which corresponds to condition \eqref{Eq:Con2:th1} in Theorem \ref{min:result}:
\begin{equation}\label{Eq:Con2:crol1}
    I-N_{1}^{\frac{1}{2}}G_{0}N_{1}^{\frac{1}{2}}>0.
\end{equation}
Moreover, for the case in which $N_1$ is negative  semidefinite, we
will use the following  condition which corresponds to condition \eqref{Eq:Con3:th1} in Theorem \ref{min:result}:
%\begin{equation}\label{Eq:Con2:crol1}
\begin{equation}\label{Eq:Con3:crol1}
    \det(I+\tilde{N}_{1}G_{0}\tilde{N}_{1})\neq0,
\end{equation}
where  $\tilde{N}_{1}=(-N_1)^{\frac{1}{2}}$.

%========================
%========================
%========================
%========================
%========================
%Theorem 2222222222222222
%========================
%========================
%========================
%========================
%========================
\begin{theorem}\label{min:result:clo1}
Suppose that the transfer function matrix $\bar{G}(s)$ % with a
is SNI and the  strictly proper transfer function matrix  $G(s)$ is
NI with $G_2=0$ and $G_1\neq0$. Also, suppose that  the matrix
$F_1^{T}\bar{G}(0)F_1$ non-singular.  If $N_1$ is positive
semidefinite, then the closed-loop positive-feedback interconnection
between $G(s)$ and $\bar{G}(s)$  is internally stable   if and only
if conditions \eqref{Eq:Con1:crol1} and \eqref{Eq:Con2:crol1} are
satisfied. Furthermore, if $N_1$ is negative semidefinite, then the
closed-loop positive-feedback interconnection between $G(s)$ and
$\bar{G}(s)$  is internally stable   if and only if conditions
\eqref{Eq:Con1:crol1} and \eqref{Eq:Con3:crol1} are satisfied.
\end{theorem}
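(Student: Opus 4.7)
The plan is to mirror the proof strategy of Theorem \ref{min:result}, specialized to the simpler situation in which only a single pole at the origin is present so that the terms arising from $G_2$ drop out entirely. First I would take a minimal state-space realization of $G(s)$ and decompose the state into a free-body component governed by the integrator and a flexible component corresponding to a strictly proper NI system with no pole at the origin. The SVD $G_1=F_1V_1^T$ in \eqref{C-D-1-f_G1} supplies coordinates in which $F_1$ picks out the output direction and $V_1$ the input direction of the free-body motion, and writing $G(s)=F_1V_1^T/s+G_0+G_r(s)$ isolates the integrator from the residual strictly proper NI part $G_r(s)$.

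Next, I would perform a loop transformation on the positive-feedback interconnection $[G(s),\bar{G}(s)]$ that moves the integrator across the loop into a constant matrix by combining it with the controller. The non-singularity hypothesis on $F_1^T\bar{G}(0)F_1$ is precisely what is needed for the transformation to be well defined, and the DC gain of the transformed controller comes out to be the projection $N_1=\mathcal{P}(\bar{G}(0),F_1)$ of \eqref{N-1-g20}. Following the same state-space manipulations as in the proof of Theorem \ref{min:result}, I would show that (a) condition \eqref{Eq:Con1:crol1} is equivalent to the transformed controller inheriting the SNI property, in particular having a Hurwitz state matrix; (b) the transformed plant is strictly proper and NI in the sense of \cite{xiong21010jor}; and (c) the closed-loop system matrix $\breve{A}$ in \eqref{mat:A:CL} of the original interconnection is similar, after a suitable invertible change of basis, to the closed-loop matrix of the transformed interconnection, so internal stability transfers both ways.

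Having reduced to a classical NI-SNI positive-feedback loop, I would invoke the DC-gain stability condition $\lambda_{\max}(G_0 N_1)<1$ in its equivalent symmetric forms. When $N_1$ is positive semidefinite, conjugating by $N_1^{1/2}$ converts this eigenvalue inequality into the matrix inequality \eqref{Eq:Con2:crol1}; when $N_1$ is negative semidefinite, the eigenvalue condition becomes the nonsingularity requirement \eqref{Eq:Con3:crol1} via $\tilde{N}_1=(-N_1)^{1/2}$, since no positive real eigenvalue of the product $G_0N_1$ can occur once the reduced loop is known to be NI-SNI. The extra $G_1J(J^TJ)^{-2}J^TG_1^T$ contributions that appear in \eqref{Eq:Con2:th1} and \eqref{Eq:Con3:th1} of Theorem \ref{min:result} vanish here because $G_2=0$ removes the $J$-factor altogether, which is a useful consistency check between the two theorems.

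The main obstacle will be the bookkeeping for the loop transformation and showing carefully that it preserves internal stability in both directions, so that the closed-loop poles of the original and transformed systems coincide as roots of the characteristic polynomial. This reduces to a Schur-complement computation on $\breve{A}$, the essential identity being that $\mathcal{P}(\bar{G}(0),F_1)$ is precisely the Schur complement of $F_1^T\bar{G}(0)F_1$ in $\bar{G}(0)$ relative to the partition induced by $F_1$. Once that identity is established and the SNI preservation of the transformed controller is verified, the remainder of the argument becomes the linear-algebraic translation between $\lambda_{\max}(G_0N_1)<1$ and conditions \eqref{Eq:Con2:crol1}, \eqref{Eq:Con3:crol1}, which proceeds exactly as in Theorem \ref{min:result} with $F$ replaced by $F_1$ and all $G_2$- and $J$-terms set to zero.
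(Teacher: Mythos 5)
Your overall reduction target is right, but the route you propose has a genuine gap at its pivotal step, and it is not the route the paper takes. The paper does not perform a loop transformation at all: it puts $G(s)$ into the block-diagonal Jordan form \eqref{eq:xdot}--\eqref{J_C_F} (here $k=0$, $n_2\neq 0$), and proves stability of the \emph{original} closed-loop matrix $\breve{A}$ directly, via the structured Lyapunov certificate $T=\bigl[\begin{smallmatrix}P-C^{T}\bar{D}C & -C^{T}\bar{C}\\ -\bar{C}^{T}C & \bar{P}\end{smallmatrix}\bigr]$ with $P=\mathrm{diag}(P_1,0)$ supplied by the positive-real/KYP analysis of $R(s)=sG(s)$ (Lemma \ref{NI-lemma-jordon-f}), plus a separate determinant argument ruling out eigenvalues at the origin (Theorem \ref{Pre_main_re}, specialized in Corollary \ref{min:result:clo1-jour}). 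The statement of Theorem \ref{min:result:clo1} is then obtained by showing $C_2=F_1R$ with $R$ invertible (Lemmas \ref{geees} and \ref{matrix-lemma}), so that $N=N_1$ and the state-space conditions become \eqref{Eq:Con1:crol1}--\eqref{Eq:Con3:crol1}.

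The gap in your plan is step (a): the claim that condition \eqref{Eq:Con1:crol1} is equivalent to the loop-shifted controller $\bar{G}_{\mathrm{new}}(s)=(I-\bar{G}(s)G_1/s)^{-1}\bar{G}(s)$ being SNI. This is exactly the hard part of the theorem, not a preliminary. First, establishing that $\bar{G}_{\mathrm{new}}$ has no closed-right-half-plane poles amounts to proving internal stability of the sub-interconnection $[G_1/s,\bar{G}(s)]$, which is itself an instance of the very theorem you are proving (the case $G_0=0$, cf.\ Theorem \ref{min:result:clo2.1} and Corollary \ref{min:result:clo2}); so as written the argument is circular unless you supply an independent proof of that base case, which would force you back to a Lyapunov-type construction anyway. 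Second, SNI-ness requires the strict inequality $j(\bar{G}_{\mathrm{new}}(j\omega)-\bar{G}_{\mathrm{new}}(j\omega)^{\ast})>0$ for all $\omega>0$ and no imaginary-axis poles; wrapping the lossless NI block $G_1/s$ (whose contribution $(G_1+G_1^T)/\omega$ is only positive semidefinite when $G_1$ is rank deficient, which is the generic case here since $m\geq n_2$ can be strict) around an SNI system is not guaranteed to preserve strictness, so the classical DC-gain theorem of \cite{xiong21010jor} may not be applicable to the reduced loop $[\tilde{G},\bar{G}_{\mathrm{new}}]$. Two smaller points you would also need to close: the two-way transfer of internal stability under the loop shift requires ruling out hidden unstable cancellations when the integrator states are absorbed into the controller; and the identification $\bar{G}_{\mathrm{new}}(0)=N_1=\mathcal{P}(\bar{G}(0),F_1)$ is not automatic from the SVD $G_1=F_1V_1^T$ --- the naive limit produces $\bar{G}(0)-\bar{G}(0)F_1(V_1^T\bar{G}(0)F_1)^{-1}V_1^T\bar{G}(0)$, and collapsing $V_1$ onto $F_1$ requires the colocation identity $B_2=R_dC_2^T$ of Lemma \ref{b2-b3b-c2-c3a}, which is itself a nontrivial consequence of the NI property.
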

%\begin{flushright}
%$\blacksquare$
%\end{flushright}

%========================
%========================
%========================
%========================
%========================
%Corollary 222222.....111111
%========================
%========================
%========================
%========================
%========================
The following theorem imposes some extra conditions on the matrix
$G_1$ which enables us to relax the sign definiteness condition on
the matrix $N_1$. This then leads to a simplified stability
condition.
\begin{theorem}\label{min:result:clo2.1}
Suppose that the transfer function matrix  $\bar{G}(s)$ is SNI and
 the strictly proper transfer function matrix  $G(s)$
is  NI with $G_2=0$ and $G_1\neq0$. Also,  suppose that
$\mathcal{N}(G_1^T)\subseteq\mathcal{N}(G_0^T)$.  Then the
closed-loop positive-feedback interconnection between $G(s)$ and
$\bar{G}(s)$  is internally stable   if and only if condition
\eqref{Eq:Con1:crol1}  is satisfied.
\end{theorem}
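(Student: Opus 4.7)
My plan is to reduce Theorem \ref{min:result:clo2.1} to Theorem \ref{min:result:clo1} by showing that the added hypothesis $\mathcal{N}(G_1^T)\subseteq\mathcal{N}(G_0^T)$ makes the supplementary conditions \eqref{Eq:Con2:crol1} and \eqref{Eq:Con3:crol1} automatic, leaving \eqref{Eq:Con1:crol1} as the only requirement. The first step is to re-express the null-space hypothesis as a range condition. Using the SVD $G_1=F_1V_1^T$ from \eqref{C-D-1-f_G1}, in which $V_1$ has orthonormal columns and $F_1$ has linearly independent columns, one obtains $\mathcal{N}(G_1^T)=\mathcal{N}(V_1F_1^T)=\mathcal{N}(F_1^T)=\mathcal{R}(F_1)^\perp$. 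Taking orthogonal complements, the hypothesis becomes $\mathcal{R}(G_0)\subseteq\mathcal{R}(F_1)$, i.e., $G_0=F_1M$ for some matrix $M$.

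The core identity is then $N_1G_0=0$. A direct computation from \eqref{N-1-g20} gives $N_1F_1=\bar{G}(0)F_1-\bar{G}(0)F_1(F_1^T\bar{G}(0)F_1)^{-1}F_1^T\bar{G}(0)F_1=0$, so $N_1G_0=N_1F_1M=0$. Since $\bar{G}(s)$ is SNI, $\bar{G}(0)=\bar{G}(0)^T$ and hence $N_1$ is symmetric. When $N_1\geq 0$, $N_1G_0=0$ forces $N_1^{1/2}G_0=0$, so $N_1^{1/2}G_0N_1^{1/2}=0$ and \eqref{Eq:Con2:crol1} reduces to $I>0$; when $N_1\leq 0$, $\tilde{N}_1G_0\tilde{N}_1=0$ analogously and \eqref{Eq:Con3:crol1} reduces to $\det I\neq 0$. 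In either sign-definite regime, Theorem \ref{min:result:clo1} immediately yields the equivalence between internal stability and \eqref{Eq:Con1:crol1}.

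The main obstacle is the sign-indefinite case for $N_1$, which Theorem \ref{min:result:clo1} does not cover as a black box. I would handle this by revisiting the state-space Lyapunov argument underlying the proof of Theorem \ref{min:result:clo1}: the contributions producing the supplementary conditions enter only through $G_0$-dependent summands premultiplied by $N_1$ or by $F_1^T(\cdot)$, and the identity $N_1F_1=0$ together with $\mathcal{R}(G_0)\subseteq\mathcal{R}(F_1)$ kills these summands before any fractional power of $N_1$ is taken. The stability inequality then closes with \eqref{Eq:Con1:crol1} alone, and necessity of \eqref{Eq:Con1:crol1} follows by the standard eigenvector argument: its failure produces a non-Hurwitz mode of the closed-loop system matrix \eqref{mat:A:CL}.
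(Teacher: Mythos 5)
Your reduction $\mathcal{N}(G_1^T)\subseteq\mathcal{N}(G_0^T)\Leftrightarrow\mathcal{R}(G_0)\subseteq\mathcal{R}(F_1)$, the identity $N_1F_1=0$, and hence $N_1G_0=0$, are all correct and match the paper's opening moves. Your dispatch of the two sign-semidefinite cases via Theorem \ref{min:result:clo1} is also correct: when $N_1\geq0$, $N_1G_0=0$ gives $N_1^{1/2}G_0=0$ so \eqref{Eq:Con2:crol1} collapses to $I>0$, and symmetrically for $N_1\leq0$. The problem is the indefinite case, and there your sketch has a genuine gap. The supplementary conditions \eqref{Eq:Con2:crol1}, \eqref{Eq:Con3:crol1} are only the \emph{sign-semidefinite reformulations} of the underlying Lyapunov positivity condition; in state-space coordinates that condition reads $P_1-C_1^TN_1C_1>0$ (cf.\ \eqref{Eq:shur:cond2-th2}), and the offending term is $C_1^TN_1C_1$, not a ``$G_0$-dependent summand premultiplied by $N_1$.'' Since $G_0=-C_1A_1^{-1}B_1$, the identity $N_1G_0=0$ only gives $N_1C_1A_1^{-1}B_1=0$, which does not by itself kill $C_1^TN_1C_1$; the square-root factorization you appeal to is exactly the step that requires semidefiniteness, so it is unavailable here.

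The missing idea — and the crux of the paper's proof — is to upgrade $N_1G_0=0$ to $N_1C_1=0$. This is done by post-multiplying by $N_1$ and invoking the positive-real/KYP structure of the NI realization (Lemma \ref{NI-lemma-jordon-f}): writing $B_1=-A_1P_1^{-1}C_1^T-P_1^{-1}L_1^TM$ with $MN_1=0$ gives $0=N_1C_1A_1^{-1}B_1N_1=-N_1C_1P_1^{-1}C_1^TN_1$, and since $P_1>0$ this Gram-type identity forces $N_1C_1=0$. With $N_1C_1=0$ in hand, the paper does not revisit Theorem \ref{min:result:clo1} at all but instead invokes Corollary \ref{Pre_main_re-G1-G2-0}, whose hypothesis $N\begin{bmatrix}C_1 & C_{3b}\end{bmatrix}=0$ replaces sign-definiteness of $N$ entirely and yields stability iff \eqref{Eq:Con1:th2}, which is equivalent to \eqref{Eq:Con1:crol1}. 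So your instinct that the extra hypothesis trivializes the supplementary conditions is right, but the mechanism is a state-space argument through the KYP relations rather than a transfer-function cancellation, and without it the indefinite case does not close.
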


%========================
%========================
%========================
%========================
%========================
%Corollary 222222.....22222222222222
%========================
%========================
%========================
%========================
%========================
The following corollary presents an important   special case of
 Theorem  \ref{min:result:clo3.1} and \ref{min:result:clo2.1}.
\begin{corollary}\label{min:result:clo2}
Suppose that the transfer function matrix  $\bar{G}(s)$ is SNI and
the strictly proper transfer function matrix  $G(s)$ is  NI with
either  $G_2=0$ and $G_1$ invertible or $G_1=0$ and $G_2>0$. Then,
the closed-loop positive-feedback interconnection between $G(s)$ and
$\bar{G}(s)$ is internally stable if and only if $\bar{G}(0)<0.$
\end{corollary}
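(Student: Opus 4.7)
The plan is to deduce this corollary directly from Theorem \ref{min:result:clo3.1} and Theorem \ref{min:result:clo2.1} by verifying, in each of the two cases, that (i) the null-space hypothesis of the relevant theorem is trivially satisfied, and (ii) the corresponding stability condition (\eqref{Eq:Con1:crol3} or \eqref{Eq:Con1:crol1}) collapses to the simpler condition $\bar{G}(0)<0$. I split the argument into the two stated cases.

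Case I ($G_1=0$, $G_2>0$). I would apply Theorem \ref{min:result:clo3.1}. Since $G_2>0$, we have $\mathcal{N}(G_2)=\{0\}$, and hence the inclusion $\mathcal{N}(G_2)\subseteq\mathcal{N}(G_0^T)$ holds automatically. It remains to show that condition \eqref{Eq:Con1:crol3}, i.e.\ $J^T\bar{G}(0)J<0$, is equivalent to $\bar{G}(0)<0$. For this I use the decomposition \eqref{JJ-m}, $G_2=JJ^T$ with $J$ of full column rank. Because $G_2$ is $m\times m$ and positive definite, it has rank $m$, so $J\in\mathbb{R}^{m\times m}$ is square and invertible. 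Thus $J^T\bar{G}(0)J<0$ is a congruence of $\bar{G}(0)$ by a nonsingular matrix, which is equivalent to $\bar{G}(0)<0$.

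Case II ($G_2=0$, $G_1$ invertible). I would apply Theorem \ref{min:result:clo2.1}. Since $G_1$ is invertible, $G_1^T$ is invertible and $\mathcal{N}(G_1^T)=\{0\}\subseteq\mathcal{N}(G_0^T)$ trivially. For the equivalence of condition \eqref{Eq:Con1:crol1} with $\bar{G}(0)<0$, I use the SVD \eqref{C-D-1-f_G1}: $G_1=F_1V_1^T$, where $F_1$ and $V_1$ have orthogonal columns and $F_1=\tilde F_1 S_2$ with $S_2>0$. Because $G_1$ is a nonsingular $m\times m$ matrix, the SVD must have $m$ nonzero singular values, so $F_1$ and $V_1$ are both $m\times m$, and $F_1$ is invertible (it is the product of an $m\times m$ orthogonal matrix and a positive-definite diagonal matrix). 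Therefore $F_1^T\bar{G}(0)F_1<0$ is a congruence of $\bar{G}(0)$ by a nonsingular matrix, which again is equivalent to $\bar{G}(0)<0$.

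Combining the two cases gives the claim. The only substantive point is the observation that the invertibility/definiteness hypothesis in each case forces $F_1$ (resp.\ $J$) to be a square nonsingular matrix, so the conditions of the parent theorems reduce by congruence to $\bar{G}(0)<0$; no genuine obstacle arises beyond this bookkeeping, since the heavy lifting is done in the proofs of Theorems \ref{min:result:clo3.1} and \ref{min:result:clo2.1}.
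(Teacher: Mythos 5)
Your proposal is correct and follows essentially the same route as the paper: in each case the relevant parent theorem (Theorem \ref{min:result:clo3.1} or Theorem \ref{min:result:clo2.1}) applies, and the key observation is that the definiteness/invertibility hypothesis forces $J$ (resp.\ $F_1$) to be square and nonsingular, so that the stability condition reduces by congruence to $\bar{G}(0)<0$. Your explicit verification of the null-space hypotheses, which are trivially satisfied here, is a small but welcome addition that the paper's proof leaves implicit.
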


%5555555555555555555555
%555555555555555555555

\begin{remark}
\label{rem:2}  The case where $G_2=0$ and $G_1=0$ corresponds  to the existing stability results presented in
\cite{petersen2010,xiong21010jor,song2010}. In this case, the
stability condition  reduces to $\lambda_{max}(\bar{G}(0)G(0))<1.$
This condition  can be obtained  from \eqref{Eq:Con2:crol1}  using
the fact $N_1=\bar{G}(0)$ in this case. Also, we require the
assumption $\bar{G}(0)>0$. Hence,
\begin{align*}
&I -N_1^{\frac{1}{2}}G_0N_1^{\frac{1}{2}}>0,\notag\\
\Leftrightarrow &N_1^{-1} -G_0>0,\notag\\
\Leftrightarrow &\lambda_{max}(\bar{G}(0)G_0)<1.
\end{align*}
Note that using a similar argument to the proof of Theorem
\ref{min:result:clo1}, we can obtain a similar result under the
assumption that $\bar{G}(0)<0.$
\end{remark}

\section{Case Study: Control of Flexible robotic arm }\label{sec:example}

In this section, we present an application of the stability results presented in this paper  to the control of
a flexible robotic arm  system. The robotic arm is pinned to a motor at one end.
For the purposes of modeling the flexible robotic arm, we use an equivalent slewing beam model as depicted in Fig. \ref{fig:beam}; see  \cite{pota1995}.
\begin{figure}[hpb]
\centering{\includegraphics[width=7cm]{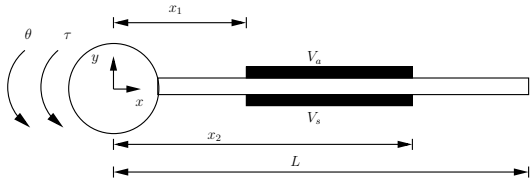}} \caption{Schematic diagram  of
the  slewing beam equivalent to the  robotic arm.} \label{fig:beam}
\end{figure}
 The motor allows the
robotic arm to traverse in the vertical plane. Two piezoelectric
patches are attached to the arm on  either side. Here, one
piezoelectric patch acts as an actuator while the other is a sensor.
The robotic arm system has  two inputs and two outputs: the inputs
are  the voltage $V_a$ applied to the piezoelectric actuator and the
torque $\tau$ applied by the motor, whereas the outputs are the
voltage $V_s$ produced  by the piezoelectric sensor and the motor
hub angle $\theta$.  The fact that this system involves colocated
``force'' actuators and ``position" sensors   indicates that the
system will be NI; e.g., see \cite{petersen2010}.

%In the current setup we neglect the tip position.
%
\subsection{Mathematical model for the robotic arm}
The beam in Fig. \ref{fig:beam} is modeled using the Bernoulli-Euler equations of motion for a beam with actuating and sensing piezoelectric elements as in \cite{pota1995}:
\begin{small}
\begin{equation}
    \frac{\partial^2}{\partial x^2}
        \left[
            E I
            \frac{\partial^2}{\partial y^2} y(x,t)
            -
            C_a V_a(x,t)
        \right]
        +
        \rho A
        \frac{\partial^2}{\partial t^2} y(x,t)
    = 0.
\label{eq:bernoulli}
\end{equation}
\end{small}
Here, $E$ is  Young's modulus and $I$ is the second
moment of inertia of the beam,  $\rho$ is the density of the beam,
and $A$ is the area of the composite beam. If the thickness of the
piezoelectric films are comparable to the thickness of the beam,
then the products $EI$ and $\rho A$ would be different in the
laminated and non-laminated areas of the beam. However, since
piezoelectric films used in practical applications are often thin
compared to the thickness of the beam, these differences will be
neglected.  Assuming that  the products $EI$ and $\rho A$ are
uniform over the length of the beam simplifies the modeling
procedure.

Now we consider various boundary conditions in modeling the beam. These are given as
\begin{align}
\label{eq:bc-time1}
    &y(0,t) = 0, \\
    &EI
        \frac{\partial^2}{\partial x^2} y(0,t)
        -
        I_h \frac{\partial^3}{\partial t^2 \partial x} y(0,t)
        +
        \tau(t)
        = 0, \\
    &EI
        \frac{\partial^2}{\partial x^2} y(L,t)
        +
        I_t \frac{\partial^3}{\partial t^2 \partial x} y(L,t)
        = 0, \\
\label{eq:bc-time2}
    &EI
        \frac{\partial^3}{\partial y^3} y(L,t)
        -
        M_t \frac{\partial^3}{\partial t^2 \partial x} y(L,t)
        = 0.
\end{align}
Here, $M_t$ and $I_t$ are the mass and inertia of the tip, which
will be neglected in this paper.Also, \eqref{eq:bc-time1} represents
the inability of the motor joint to undergo transverse motion. As in
\cite{pota1995}, the time domain beam equation \eqref{eq:bernoulli}
with boundary conditions \eqref{eq:bc-time1}-\eqref{eq:bc-time2} can
be transformed into an equivalent Laplace domain representation as
\begin{equation}
\label{eq:bernoulli-laplace}
    Y''''(x,s) - \beta^4 Y(x,s) = \frac{C_a V_a''(x,s)}{EI}
\end{equation}
with boundary conditions
\begin{align}
\label{eq:bc-laplace1}
    &Y(0,s) = 0, \\
    &EIY''(0,s) - I_h s^2 Y'(0,s) + \tau(s) = 0, \\
    &EIY''(L,s) + I_t s^2 Y'(L,s) = 0, \\
\label{eq:bc-laplace2}
    &EIY'''(L,s) - M_t s^2 Y(L,s) = 0,
\end{align}
where the primes indicate spatial derivatives and
\begin{equation}
\label{eq:beta}
    \beta^4(s) = -\frac{\rho A s^2}{EI}.
\end{equation}
Note that \eqref{eq:bernoulli-laplace} is the Laplace domain
equivalent of the Bernoulli-Euler beam equation with $V_a''(\cdot)$
as a forcing input. Together,
\eqref{eq:bernoulli-laplace}-\eqref{eq:bc-laplace2} represent a set
of linear ordinary differential equations with mixed boundary
conditions: two at $x=0$ and two at $x=L$. A state space
representation for the system can be formed  from  equations
\eqref{eq:bernoulli-laplace}-\eqref{eq:bc-laplace2} as in
\cite{pota1995}:
\begin{small}
\begin{align}
\label{eq:beam-ss}
    \begin{bmatrix}
        Y'(x,s) \\
        Y''(x,s) \\
        Y'''(x,s) \\
        Y''''(x,s)
    \end{bmatrix}
    =&
    \begin{bmatrix}
        0 & 1 & 0 & 0 \\
        0 & 0 & 1 & 0 \\
        0 & 0 & 0 & 1 \\
        \beta^4 & 0 & 0 & 0 \\
    \end{bmatrix}
    \begin{bmatrix}
        Y(x,s) \\
        Y'(x,s) \\
        Y''(x,s) \\
        Y'''(x,s)
    \end{bmatrix}
   \notag\\& +
    \begin{bmatrix}
        0 \\
        0 \\
        0 \\
        1
    \end{bmatrix}
    \frac{C_a V_a(s)}{EI}
    \sum_{i=1}^2 \delta(x - x_i)(-1)^{i+1}
\end{align}
\end{small}
where $\delta(\cdot)$ represents the Dirac delta function. The
equation  \eqref{eq:beam-ss} can be written in the general form
\begin{equation}
\label{eq:ss-general}
    Z'(x,s) = \bar{A} \; Z(x,s) + \bar{B} \; U(x,s),
\end{equation}
the solution to which is given by
\begin{small}
\begin{align}
\label{eq:ss-solution}
    &Z(x,s)\notag\\
    &=
    e^{\bar{A}x} Z(0,s)
    +
    \left[
        \bar{A} e^{\bar{A} (x-x_1)} \bar{B}
        -
        \bar{A} e^{\bar{A} (x-x_2)} \bar{B}
    \right]
    \frac{C_a V_a(s)}{EI}.
\end{align}
\end{small} Once the boundary conditions $Z(0,s)$ and $Z(L,s)$ are
known, \eqref{eq:ss-solution} will depend upon three conditions for
$x$, namely, $0 \le x \le x_1, \; x_1 \le x \le x_2,\; \text{and}\;
x_2 \le x \le L$. For further details see \cite{pota1995}.

\subsection{Infinite Dimensional Transfer function Model }\label{sec:B:ex}
Here, we present the input-output relationship between the two inputs $V_a$ and $\tau$, and the corresponding collocated outputs $V_s$ and $\theta$ in the form of the  transfer function matrix,
\begin{equation}
\label{eq:tf}
    \begin{bmatrix}
        \theta(s) \\
        V_s(s)
    \end{bmatrix}
    =
    G(s)
    \begin{bmatrix}
        \tau(s) \\
        V_a(s)
    \end{bmatrix},
\end{equation}
where  $G(s)
    =
    \begin{bmatrix}
        G_{\tau,\theta}(s) & G_{V_a,\theta}(s) \\
        G_{\tau,V_s}(s) & G_{V_a, V_s}(s) \\
    \end{bmatrix} $ and each of the elements of this transfer function matrix  is an infinite dimensional transfer function defined in terms of  transcendental functions of $\beta$. Indeed, each of the four transfer functions in \eqref{eq:tf} can be written as a ratio of numerator and denominator functions   computed as
\begin{small}
\begin{align}
\label{eq:g-tau-theta}
    &G_{\tau,\theta}(s)
        = \frac{N_{\tau,\theta}(s)}{D(s)}
        = \frac{Y'(0,s)}{T} \Big|_{V_a(s)=0}, \\
    &G_{V_a,\theta}(s)
        = \frac{N_{V_a,\theta}(s)}{D(s)}
        = \frac{Y'(0,s)}{V_a} \Big|_{\tau(s) = 0}, \\
    &G_{\tau,V_s}(s)
        = \frac{N_{\tau,V_s}(s)}{D(s)}
        = \frac{C_s \left( Y'(x_2,s)-Y'(x_1,s) \right)}{\tau(s)} \Big|_{V_a(s) = 0}, \\
\label{eq:g-va-vs}
    &G_{V_a,V_s}(s)
        = \frac{N_{V_a,V_s}(s)}{D(s)}
        = \frac{C_s \left( Y'(x_2,s)-Y'(x_1,s) \right)}{V_a(s)} \Big|_{T(s) = 0}.
\end{align}

Here, \begin{align} \label{eq:Dss}
    D(s)
    =&
    4 \beta E I (\rho A(\cos(\beta l)\sinh(\beta l)-\cosh(\beta l)\sin(\beta l)))\notag\\&-4\beta^4 E I I_h (1+\cos(\beta l))\cosh(\beta l)),
\end{align}\end{small} where
$I_h$ is the hub inertia. Also, the functions
 $N_{\tau,\theta}(s),N_{V_a,\theta},N_{\tau,V_s}(s)$, $N_{V_a,V_s}(s)$ are given by  very complicated expressions which can be found in  equations  (26)-(28) in  \cite{pota1995}.

We now compute the transfer functions in
\eqref{eq:g-tau-theta}-\eqref{eq:g-va-vs} for the case where the
piezoelectric actuators and sensors span the entire length of the
beam. This corresponds to the  substitutions: $x_1 = 0$ and
$x_2=L$. The resulting transfer functions  have been
verified  in \cite{Alberts1995} for an experimented robotic arm
system.

Despite the fact that  we have not defined the NI property for
infinite dimensional transfer functions, we will provide some
calculations which indicate that the  infinite dimensional transfer
function matrix $G(s)$ defined in \eqref{eq:tf}-\eqref{eq:g-va-vs}
satisfies  the NI conditions given in Definition \ref{Def:NI}. Since
the infinite dimensional transfer function matrix $G(s)$ is actually
a transcendental function of $\beta(s)$, $G(s)=\tilde{G}(\beta(s))$,
then  Condition 2) in Definition \ref{Def:NI} is equivalent to the
condition
\begin{equation}
\label{eq:ni}
    j(\tilde{G}(\beta(j\omega)) - \tilde{G}(\beta(j\omega))^*) \ge 0
\end{equation} for all $\omega\geq0$ where $\beta(s)$ is given by \eqref{eq:beta}. Indeed, it is straightforward to verify from the formulas for the transfer function matrix  \eqref{eq:tf}-\eqref{eq:g-va-vs} that  $j(\tilde{G}(\beta(j\omega)) - \tilde{G}(\beta(j\omega))^*) = 0$ for all $\omega\geq0 $. Also, the function  $D(s)$ given  in \eqref{eq:Dss} has an  infinite numbers of roots. However, we can check   Condition 3) in Definition 2  for a finite number of these roots  on the imaginary axis. To do so, we have calculated the first eleven $j\omega$-axis roots of $D(s)$ numerically. At each of these roots $s=j\omega_0$, the corresponding  residue matrix $K(j\omega _{0})=\underset{%
s\longrightarrow j\omega _{0}}{\lim }(s-j\omega _{0})jG(s)$ is calculated using  L'Hopital's rule as follows:
\begin{align}
\label{eq:lopetal_role}
    K(j\omega_0)&=\underset{%
s\longrightarrow j\omega _{0}}{\lim }(s-j\omega _{0})jG(s)\notag\\&=\underset{%
s\longrightarrow j\omega _{0}}{\lim }(s-j\omega _{0})j\begin{bmatrix}
        \frac{N_{\tau,\theta}(s)}{D(s)} & \frac{N_{V_a,\theta}(s)}{D(s)} \\
        \frac{N_{\tau,V_s}(s)}{D(s)} & \frac{N_{V_a,V_s}(s)}{D(s)} \\
    \end{bmatrix}\notag\\
&=j\begin{bmatrix}
        \frac{N_{\tau,\theta}(j\omega _{0})}{D'(j\omega _{0})} & \frac{N_{V_a,\theta}(j\omega _{0})}{D'(j\omega _{0})} \\
        \frac{N_{\tau,V_s}(j\omega _{0})}{D'(j\omega _{0})} & \frac{N_{V_a,V_s}(j\omega _{0})}{D'(j\omega _{0})} \\
    \end{bmatrix},
\end{align}
 where $D'(j\omega )$ denotes  the first derivative of $D(j\omega)$ with respect to  $\omega $.

In this case study, the parameter values for the robotic arm  are
taken   from \cite{Alberts1995}. These
parameter values are shown in the Table \ref{parpametr:values}.

\begin{table}[ht]
\caption{Robotic arm parameter values.} % title of Table
\centering  % used for centering table
\begin{small}\begin{tabular}{|l|c|c|}
  \hline\hline
  Parameter & Value & Unit  \\ \hline
  Hub inertia, $I_h$  & 0.0348  & $N-m-s^2$ \\ \hline
  Beam length,  $l$ & 2 & $m$ \\ \hline
  Volumetric mass  density, $\rho$ & 2712.6 & $Kg/m^2$ \\ \hline
  Cross sectional area, $A$  & 483.87$\times 10^{-6}$ & $m^2$ \\ \hline
  Young's Modulus,  $E$  & 69.0$\times 10^{9}$ & $N/m^2$ \\ \hline
  Area moment of inertia, $I$ & 1.63$\times 10^{-9}$  & $m^4$ \\ \hline
  Coupling Coefficient $k_{31}$ & -0.340 &-\\ \hline
  Capacitance, $C$  & 68.35 & $\mu F/m^2$ \\ \hline
  Thickness $t_s$ & 3.05$\times 10^{-4}$ &$ m$ \\ \hline
  \hline
\end{tabular}\end{small}.
\label{parpametr:values} % is used to refer this table in the text
\end{table}

Table \ref{table1} shows the calculated roots of $D(s)$ and the minimum eigenvalue of the corresponding residue matrix given in \eqref{eq:lopetal_role}.
Also, the matrix $G_2=\underset{s\longrightarrow 0}{\lim }s^{2}G(s)$ is found to be $
    \begin{bmatrix}
        0.14 & 0 \\
        0 & 0
    \end{bmatrix}$ which is positive semidefinite.
\begin{table}[ht]
\caption{The minimum eigenvalues of the residue matrix  corresponding to the first ten resonant modes  of the infinite dimensional plant transfer function matrix.} % title of Table
\centering  % used for centering table
\begin{small}\begin{tabular}{|c|c|c|}
  \hline\hline
  n & Root $s=j\omega$ (rad/s) & {Minimum eigenvalue }  \\ \hline
  0 & 0           & 0 \\ \hline
  1 & 3.395326441 & 0.1434 \\ \hline
  2 & 9.501801884 & 0.2553 \\ \hline
  3 & 17.08210071 & 0.1320 \\ \hline
  4 & 29.32863976 & 0.0361 \\ \hline
  5 & 47.01240951 & 0.0142 \\ \hline
  6 & 96.84550724 & 0.0049 \\ \hline
  7 & 128.7332003 & 0.0034\\ \hline
  8 & 165.2195349 & 0.0025 \\ \hline
  9 & 206.2898971 & 0.0019 \\ \hline
  10 & 251.9420283 & 0.0015 \\
  \hline
\end{tabular}\end{small}.
\label{table1} % is used to refer this table in the text
\end{table}
These  results  show that the  infinite dimensional transfer
function matrix $G(s)$  satisfies  the   conditions of Definition
\ref{Def:NI}, at least for the first ten resonant modes. %The
%An infinite dimensional transfer function matrix $G(s)$ model for
%the robotic arm  is  given in \cite{pota1995}:
%\begin{small}
%$%\begin{align}\label{dddd}
%G(s) = \begin{bmatrix}
%        \frac{N_{\tau,\theta}(s)}{D(s)} & \frac{N_{V_a,\theta}(s)}{D(s)} \\
%       \frac{N_{\tau,V_s}(s)}{D(s)} & \frac{N_{V_a,V_s}(s)}{D(s)} \\
%    \end{bmatrix},
%$%\end{align}
%\end{small} where $N_{\tau,\theta}(s),N_{V_a,\theta},N_{\tau,V_s}(s)$,
%$N_{V_a,V_s}(s)$ and $D(s)$ are given by   complicated expressions
%which can be found in  equations  (26)-(28) in \cite{pota1995}.

%
%
Also in the Fig. \ref{Resud11}, we plot the  log of the  minimum eigenvalue of the matrix $D'(j\omega)^2
K(j\omega) + \gamma D( j\omega )^2$ as a function of  frequency $\omega$, where $\gamma$ is a positive constant.  This plot also indicates that the residue matrix defined in \eqref{eq:lopetal_role} will be positive semidefinite at the system poles within the frequency range of interest.

  \begin{figure}
    \centering{\includegraphics[width=8.6cm]{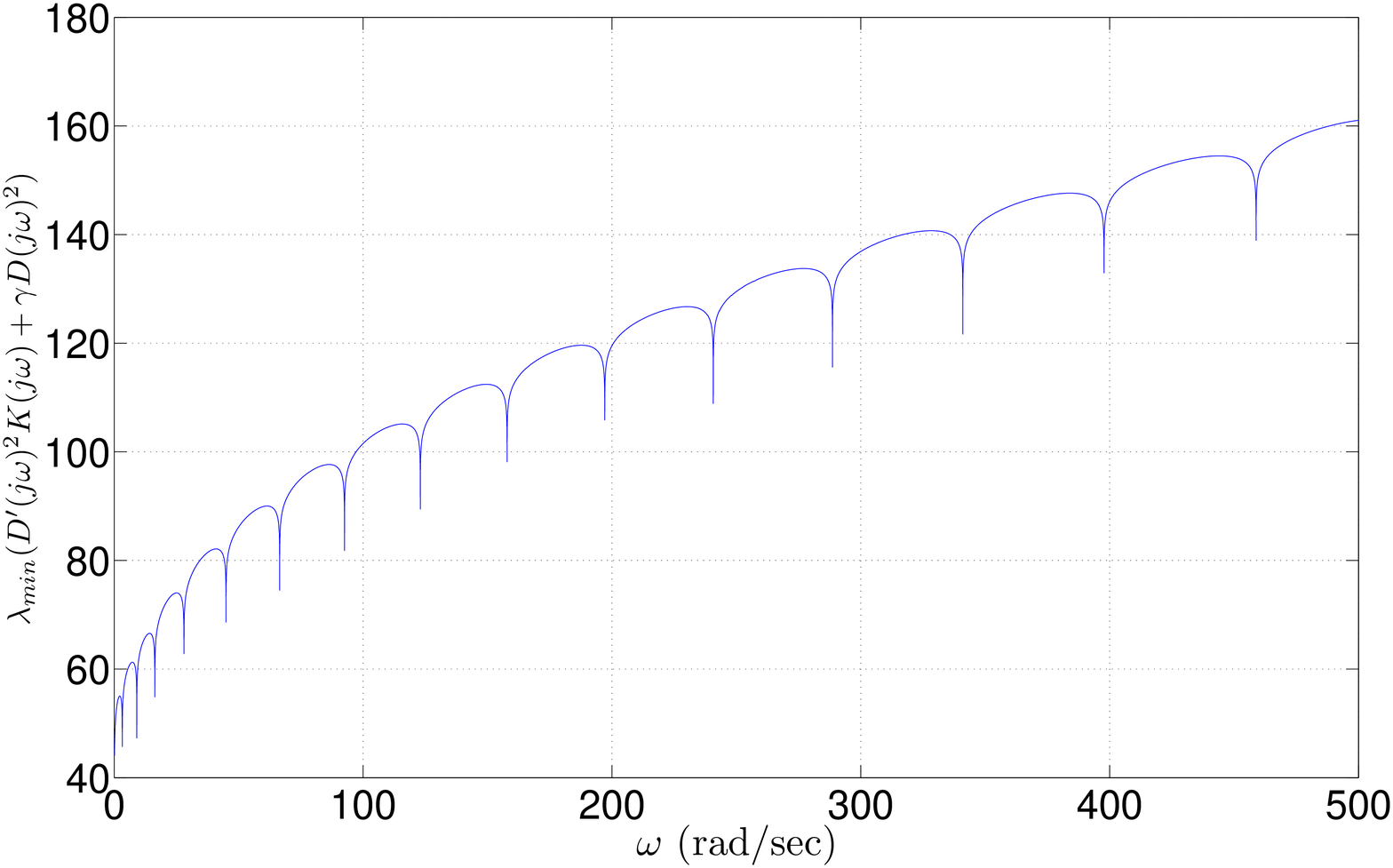}}
\caption{Plot of the log of the  minimum eigenvalue of the matrix $ D'(j\omega)^2  K(j\omega) + \gamma D( j\omega )^2$ versus  frequency $\omega$.} \label{Resud11}
\end{figure}

\subsection{Approximate Finite-dimensional Transfer Function Matrix}
\label{ssec:finite-dim}
The transfer functions in \eqref{eq:g-tau-theta}-\eqref{eq:g-va-vs} are irrational functions of $s$.
We now approximate  these transfer functions by rational functions in $s$ in order to design a suitable controller for the robotic arm system and to simulate its performance.
%\begin{equation}
%\label{eq:G}
%   G(s)
%   =
%   \begin{bmatrix}
%       G_{\tau,\theta}(s) & G_{V_a,\theta}(s) \\
%       G_{\tau,V_s}(s) & G_{V_a, V_s}(s) \\
%   \end{bmatrix}.
%\end{equation}
Various methods such as the Maclaurin series expansion presented in
\cite{Schmitz:1985}, the Rayleigh-Ritz method \cite{meirovitch1975},
and the assumed modes method \cite{meirovitch1975} are available in
literature for the finite dimensional approximation  of such an
infinite dimensional model. Here,  we adopt a partial fraction
approach to obtain a  finite dimensional approximation  of $G(s)$.
This method is similar to the assumed modes technique described in
\cite{meirovitch1975}. The finite dimensional model can be written
as
\begin{small}
\begin{align}
\label{eq:G}
    G_f(s)
    &=
    \begin{bmatrix}
        G_{f\tau,\theta}(s) & G_{fV_a,\theta}(s) \\
        G_{f\tau,V_s}(s) & G_{fV_a, V_s}(s) \\
    \end{bmatrix}\notag\\&= \begin{bmatrix}
       \frac{N_{\tau,\theta}(s)}{D(s)} & \frac{N_{V_a,\theta}(s)}{D(s)} \\
     \frac{N_{\tau,V_s}(s)}{D(s)} & \frac{N_{V_a,V_s}(s)}{D(s)} \\
         \end{bmatrix}
    \notag\\&=
    \sum_{i=0}^{n}\frac{1}{k}\begin{bmatrix}
        \frac{a_i}{s^2 + p_i^2} & \frac{b_i}{s^2 + p_i^2} \\
        \frac{c_i}{s^2 + p_i^2} & \frac{d_i}{s^2 + p_i^2} \\
    \end{bmatrix}.
\end{align}
\end{small}
Here $D(s)$ in the infinite dimensional model $G(s)$ is approximated  by
\begin{equation}
  %  \label{eq:D:f}
    D_f(s)=k\prod_{i=0}^n(s^2 + p_j^2),
\end{equation}
where, $jp_0...jp_n$ are the first n $j\omega$-axis roots of $D(s)$. Also,  the  coefficient matrices $C_i=\begin{bmatrix}
        a_i & b_i \\
        c_i& d_i \\
    \end{bmatrix},$ are computed using a  partial fraction expansion method. That is,
   \begin{small} \begin{equation}C_i=   \frac{1}{k\prod_{j=0,j\neq i}^n(-p_i^2 + p_j^2)}\begin{bmatrix}\label{rez:approxmated}
        N_{\tau,\theta}(j p_i) & N_{V_a,\theta}(j p_i)\\
        N_{\tau,V_s}(jp_i)& N_{V_a,V_s}(jp_i) \\
    \end{bmatrix}.
\end{equation}\end{small}
 The constant $k$ is chosen so that
\begin{equation}
    \label{eq:cons:k}
    D(j\omega_0)=k\prod_{i=0}^n(-\omega_{0}^2 + p_i^2),
\end{equation}
where $\omega_0$ is such that $j\omega_0$ is not a root of $D(s)$.
%\begin{equation}
%C_i=   \frac{1}{k\prod_{j=0,j\neq i}^n(s^2 + p_i^2)}\begin{bmatrix}\label{rez:approxmated}
%        \frac{N_{f\tau,\theta}(p_i)}{k\prod_{j=0,j\neq i}^n(s^2 + p_i^2)} & \frac{N_{fV_a,\theta}(p_i)}{k\prod_{j=0,j\neq i}^n(s^2 + p_i^2)} \\
%        \frac{Nf_{\tau,V_s}(p_i)}{k\prod_{j=0,j\neq i}^n(s^2 + p_i^2)} & \frac{Nf_{V_a,V_s}(p_i)}{k\prod_{j=0,j\neq i}^n(s^2 + p_i^2)} \\
%    \end{bmatrix}.
%\end{equation}
 We consider the first  resonant  mode; i.e., $n=1$ for the controller design. %for truncating $G(s)$ as,
%\begin{equation}
%\label{eq:partial-fraction}
%   G_f(s)
%   =
%   \frac{C_0}{s^2}
%   +
%   \frac{C_1}{s^2 + p_1^2}
%   +
%   \frac{C_2}{s^2 + p_2^2}
%   +
%   \frac{C_3}{s^2 + p_3^2}
%   +
%   \frac{C_4}{s^2 + p_4^2}.
%\end{equation}
%Here, the coefficient matrix $C_0$ corresponds to to the double pole at the origin whereas the coefficient matrices $C_1, C_2, C_3$, and $C_4$ correspond to double poles at $p_1, p_2, p_3$, and $p_4$ respectively defining the other four modes of the finite dimensional approximation for $G(s)$.
The corresponding  coefficient matrices were computed %using \eqref{rez:approxmated}
and were found to be
\begin{small}
$%\begin{align}
%\label{eq:coeffs-pole}
    C_0
    =
    \begin{bmatrix}
        0.14 & 0 \\
        0 & 0
    \end{bmatrix};
    C_1
    =
    \begin{bmatrix}
        3.0907 & 3.5573\times10^{-4} \\
        3.5573\times10^{-4} & 2.3500
    \end{bmatrix};%\notag\\
    %&C_2
%    =
%    \begin{bmatrix}
%        12.1185  & -0.2165\times10^{-4} \\
%        -0.2165\times10^{-4} & 44.1415
%    \end{bmatrix};\notag\\
%    &C_3
%    =
%    \begin{bmatrix}
%        11.0959 & 11.724\times10^{-4} \\
%        11.724\times10^{-4}  & 911.9085
%    \end{bmatrix}; \notag\\
%    &C_4
%    =
%    \begin{bmatrix}
%        3.0791 & -1.4638\times10^{-4} \\
%        -1.4638\times10^{-4} & 17438.04
%    \end{bmatrix},
$ %\end{align}
\end{small} and %the constant $k$ was calculated using
%\eqref{eq:cons:k} to be
 $k=6.6667 \times 10^{-8}$. Also, the poles
were computed to be $p_0=0,p_1 = 3.4$.%; p_2 = 9.5; p_3 = 17.08; p_4 =
%29.33$.%, which are the first four resonant modes given in Table
%\ref{table1}.

%\subsection{NI property test}
%In this section, we test for the NI properties of the infinite dimensional transfer function matrix $G(s)$ as well as the truncated finite dimensional transfer function matrix $G_f(s)$ for the robotic arm. Since the infinite dimensional transfer function matrix $G(\cdot)$ is actually a transcendental function of $\beta(s)$, it satisfies the NI property if \cite{petersen2010}
%\begin{equation}
%\label{eq:ni}
%   j(G(\beta(j\omega)) - G(\beta(j\omega))^*) \ge 0,
%\end{equation}
%where $\beta(s)$ is given by \eqref{eq:beta}. Substituting the component transfer functions from \eqref{eq:G} into \eqref{eq:ni} we get $j(G(\beta(j\omega)) - G(\beta(j\omega))^*) = 0$, which shows that the infinite dimensional transfer function describing the robotic arm is indeed NI.

The finite dimensional model $G_f(s)$ in
\eqref{eq:G} is NI, since  $j(G_f(j\omega) - G_f(j\omega)^*) = 0$
for all $\omega\geq0$, where $j\omega$ is not a pole for $G_f(s)$. This follows because in this example, $G_f(j\omega)$ is real and symmetric for all $\omega$ such that $j\omega$ is  not a pole of  $G_f(s)$.
Also, the coefficient matrices $C_0,C_1$ are positive semidefinite
which implies that Condition 3) in Definition \ref{Def:NI} is
satisfied. Moreover, $G_2=\underset{s\longrightarrow 0}{\lim
}s^{2}G(s)=C_0\geq0,$ which implies that   Condition 4) in
Definition \ref{Def:NI} is satisfied.
%Furthermore, since the finite dimensional transfer function $G_f(s)$ has poles at the origin corresponding to the free body motion for the flexible robotic arm, we check for the properties in (5)-(7) to test for the NI property. Computing the matrices $G_2, G_1$, and $G_0$ in (5)-(7), we get
%\begin{equation}
%\label{eq:Gs}
%   G_2
%   =
%   \begin{bmatrix}
%       0.14 & 0 \\
%       0 & 0
%   \end{bmatrix}
%   \> 0;
%   \qquad
%   G_1
%   =
%   \begin{bmatrix}
%       0 & 0 \\
%       0 & 0
%   \end{bmatrix}
%   = 0;
%   \qquad
%   G_0
%   =
%   \begin{bmatrix}
%       0.19 & 0 \\
%       0 & 0.073
%   \end{bmatrix}
%   > 0
%\end{equation}
%which satisfies the NI property for the finite dimensional system transfer function matrix $G_f(s)$ for the flexible robotic arm. In fact, the matrices in \eqref{eq:Gs} correspond to the case described in Corollary 3. We will use the conditions in Corollary 3 to check for the stability of the closed loop system in Section \ref{ssec:controller}.

\subsection{Controller design}
\label{ssec:controller}

According to Theorem \ref{min:result} if a plant is  NI, any SNI
controller which satisfies the conditions of  Theorem
\ref{min:result}  will stabilize the system. The fact that the robotic arm plant involves colocated ``force" actuators and ``position" sensors indicates that this plant should be NI.
%Furthermore,
%the analysis given in Subsection \ref{sec:B:ex} above indicate that
%any finite dimensional approximation to the manipulator system
%should be NI.
 In particular, the finite dimensional approximation
to the robotic arm model  derived in Subsection
\ref{ssec:finite-dim} was shown to be NI. We will now use a finite dimensional model of the form  \eqref{eq:G} to design a
controller for the system. First, we compute the matrices $G_2,
G_1$, and $G_0$ in \eqref{f_G0}, for the finite
dimension approximate system where n=1 in \eqref{eq:G} to obtain
\begin{small}
\begin{align}
\label{eq:Gs}
    &G_2
    =
    \begin{bmatrix}
        0.14 & 0 \\
        0 & 0
    \end{bmatrix}
    \> 0;
    \qquad
    G_1
    =
    \begin{bmatrix}
        0 & 0 \\
        0 & 0
    \end{bmatrix};
    \notag\\
    &
    G_0
    =
    \begin{bmatrix}
        0.41253083 & 0.0000319 \\
        0.0000319 & 0.15672805
    \end{bmatrix}.
\end{align}
\end{small} This implies that we can use Corollary
\ref{min:result:clo3} to guarantee the stability of the  positive
feedback interconnection between the plant and an SNI controller.

In this case study,  an integral resonant controller (IRC) is chosen to stabilize the system; e.g., see \cite{petersen2010}. An IRC is a first order controller which takes the form
\begin{equation}
\label{eq:irc}
    \bar{G}(s) = (sI + \Gamma \Phi)^{-1} \Gamma - \Delta.
\end{equation}
This controller is SNI if   $\Gamma>0,\Phi>0$ and $\Delta$ is a
symmetric matrix \cite{petersen2010}. Now,  we  chose the controller
matrices $\Gamma>0,\Phi>0$ and $\Delta$ such that the  conditions
of Corollary \ref{min:result:clo3} are satisfied. We choose the
controller matrices as follows:
\begin{small}
\begin{align}
\label{eq:irc-mtx}
  \Gamma
    =
    \begin{bmatrix}
        35 & 15 \\
        15 & 20
    \end{bmatrix};
    \Phi
    =
    \begin{bmatrix}
        0.745 & 0.521 \\
        0.521 & 1.021
    \end{bmatrix};
    \Delta
    =
    \begin{bmatrix}
        4.2900 & 0 \\
        0 & 2.22
    \end{bmatrix}.
\end{align}
\end{small}
This leads to a controller   DC gain matrix of  $\bar{G}(0)=\begin{bmatrix} -2.2029 &  -1.0650\\
   -1.0650  &  -0.6971\end{bmatrix}$. To check the stability conditions  in Corollary \ref{min:result:clo3}, we first compute  the matrix $J$ in \eqref{JJ-m} using $G_2$ in \eqref{eq:Gs}. This yields  $J
        =
        \begin{bmatrix}
            0.3751 \\
            0
        \end{bmatrix}.$ Also, the matrix $N_2$ in \eqref{N-2-g10} is calculated as   $N_2=\begin{bmatrix} 0 & 0\\
  0  & -0.182252 \end{bmatrix}$, which is negative semidefinite.
Then we conclude   \begin{small}$\det(I + \tilde{N}_2 G_0 \tilde{N}_2)=\det\begin{bmatrix} 1.000000025 & 0.000000003\\
  0.000000003  &  5.390603\end{bmatrix} \neq0,$ where $\tilde{N}_2=(-N_2)^{1/2}.$ Also,  $J^T \bar{G}(0) J =-0.309908135 < 0.$ \end{small}
  Thus, the conditions of Corollary \ref{min:result:clo3} are satisfied.

  \begin{figure}
    \centering{\includegraphics[width=5.6cm]{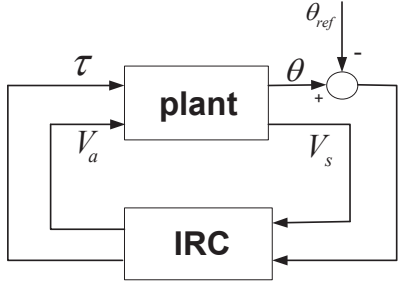}}
\caption{Block diagram  corresponding to a step change in the robotic arm  reference position.} \label{blok:step}
\end{figure}
 To verify the performance  of the closed loop system, we simulate the  response of this system corresponding   to a step change in the reference  position  of the robotic arm; see Fig. \ref{blok:step}.
This step response is shown in Fig. \ref{step11}. Also,  the
corresponding response of the piezo sensor output $V_s$ is shown in
Fig. \ref{step22}. Here, the step responses were calculated using
finite dimensional plant models defined in
\eqref{eq:G}
%-\eqref{eq:coeffs-pole}
for different numbers of modes,
n=2,3...7.

To this end, we have used the proposed  controller which is designed
for the finite dimensional model with n=1 when applied to the plant
with finite dimensional model  where n=2,3...7 in order to check the
performance and robustness of the proposed controller. In fact, the
performance of the closed loop system is
  found to
  improve by increasing the number of modes; see Fig
  \ref{step11} and Fig \ref{step22}.

 \begin{figure}
    \centering{\includegraphics[width=8.6cm]{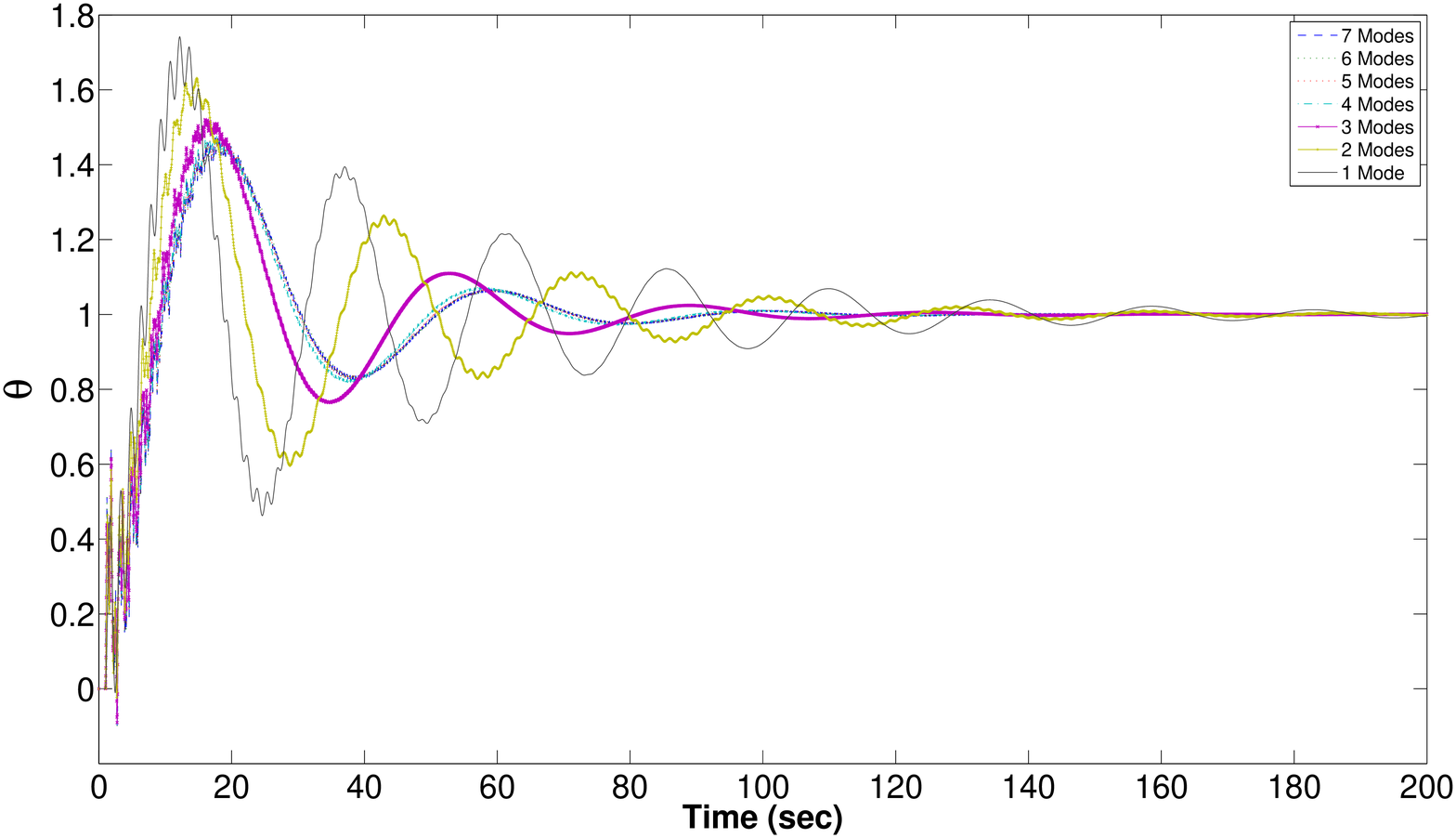}}
\caption{Position response  of the robotic arm  system corresponding to a unit step change in reference position. Here, different numbers of modes are used in the plant model. } \label{step11}
\end{figure}

 \begin{figure}
    \centering{\includegraphics[width=8.6cm]{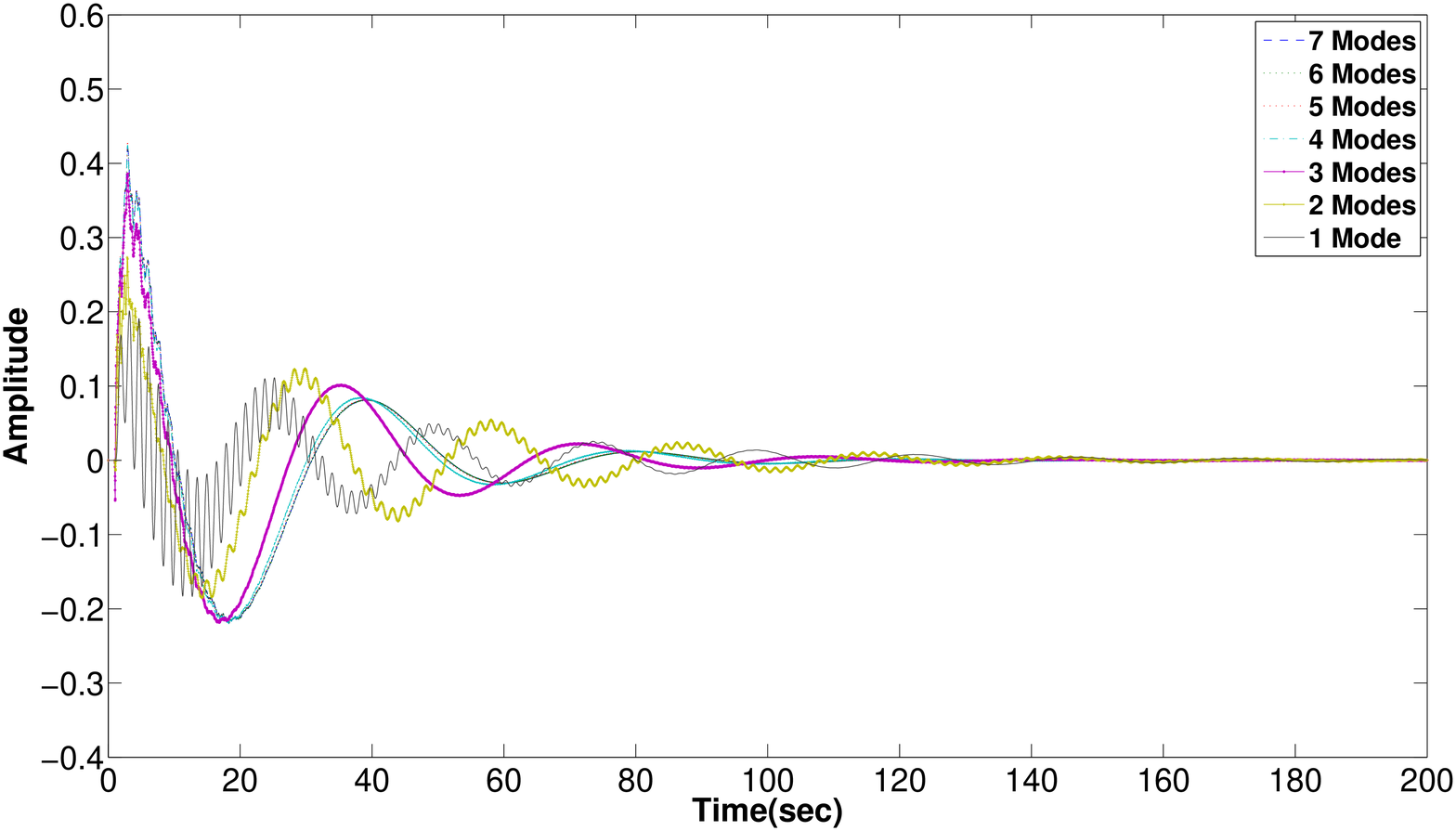}}
\caption{ Piezoelectric response sensor output $V_s$ response corresponding to  a unit step change in reference position. Here, different numbers of modes are used in the plant model.} \label{step22}
\end{figure}

Note that the controller parameters in \eqref{eq:irc-mtx} were
chosen by process of trial and error to obtain good closed loop
performance the case of the nominal plant model, $n=1$. An
alternative approach, which would be useful in the case of a more
complicated SNI controller structure, would be to use an
optimization procedure to obtain the controller parameters; e.g.,
see \cite{Moheimani2006}.

\section{Conclusion}
\label{sec:conclusion} In this paper,  new  stability results for the
positive-feedback interconnection of  negative imaginary
systems have been derived. A new NI definition is presented,
 which   allows for systems  having  free
body dynamics  to be considered as  NI systems. This work can be
used in controller design to allow for a broader class of NI systems
than considered in previous work. %Also, the stability result for an
%NI system with poles at the origin connected with an SNI system with
%positive feedback can be used for
%  controller design including    integral action.
The application  of the main results in this paper  has  been
illustrated via a case  study involving the control of  a flexible
robotic arm.

\bibliographystyle{IEEEtran}
\bibliography{stability_bib}

% Generated by IEEEtran.bst, version: 1.12 (2007/01/11)
\begin{thebibliography}{10}
\providecommand{\url}[1]{#1}
\csname url@samestyle\endcsname
\providecommand{\newblock}{\relax}
\providecommand{\bibinfo}[2]{#2}
\providecommand{\BIBentrySTDinterwordspacing}{\spaceskip=0pt\relax}
\providecommand{\BIBentryALTinterwordstretchfactor}{4}
\providecommand{\BIBentryALTinterwordspacing}{\spaceskip=\fontdimen2\font plus
\BIBentryALTinterwordstretchfactor\fontdimen3\font minus
  \fontdimen4\font\relax}
\providecommand{\BIBforeignlanguage}[2]{{%
\expandafter\ifx\csname l@#1\endcsname\relax
\typeout{** WARNING: IEEEtran.bst: No hyphenation pattern has been}%
\typeout{** loaded for the language `#1'. Using the pattern for}%
\typeout{** the default language instead.}%
\else
\language=\csname l@#1\endcsname
\fi
#2}}
\providecommand{\BIBdecl}{\relax}
\BIBdecl

\bibitem{Wilson2002}
D.~G. Wilson, R.~D. Robinett, G.~G. Parker, and G.~P. Starr, ``Augmented
  sliding mode control for flexible link manipulators,'' \emph{Journal of
  Intelligent and Robotic Systems}, vol.~34, no.~4, pp. 415--430, 2002.

\bibitem{Harigae20}
M.~Harigae, I.~Yamaguchi, T.~Kasai, H.~Igawa, and T.~Suzuki, ``Control of large
  space structures using {GPS} modal parameter identification and attitude and
  deformation estimation,'' \emph{Electronics and Communications in Japan},
  vol.~86, no.~4, pp. 63--71, 2003.

\bibitem{Bhikkaji2009}
B.~Bhikkaji and S.~Moheimani, ``Fast scanning using piezoelectric tube
  nanopositioners: A negative imaginary approach,'' in \emph{Proc. IEEE/ASME
  Int. Conf. Advanced Intelligent Mechatronics AIM}, Singapore, July 2009, pp.
  274--279.

\bibitem{Mahmood2011}
I.~A. Mahmood, S.~O.~R. Moheimani, and B.~Bhikkaji, ``A new scanning method for
  fast atomic force microscopy,'' \emph{IEEE Transactions on Nanotechnology},
  vol.~10, no.~2, pp. 203--216, 2011.

\bibitem{Salapaka2002}
S.~Salapaka, A.~Sebastian, J.~P. Cleveland, and M.~V. Salapaka, ``High
  bandwidth nano-positioner: A robust control approach,'' \emph{Review of
  Scientific Instruments}, vol.~73, no.~9, pp. 3232--3241, 2002.

\bibitem{Hulzen2010}
J.~R. van Hulzen, G.~Schitter, P.~M.~J. Van~den Hof, and J.~van Eijk, ``Modal
  actuation for high bandwidth nano-positioning,'' in \emph{Proc. American
  Control Conference}, Baltimore, Maryland, USA, July 2010, pp. 6525--6530.

\bibitem{Devasia2007}
S.~Devasia, E.~Eleftheriou, and S.~O.~R. Moheimani, ``A survey of control
  issues in nanopositioning,'' \emph{IEEE Transactions on Control Systems
  Technology}, vol.~15, no.~5, pp. 802--823, 2007.

\bibitem{Diaz2012}
I.~M. Diaz, E.~Pereira, and P.~Reynolds, ``Integral resonant control scheme for
  cancelling human-induced vibrations in light-weight pedestrian structures,''
  \emph{Structural Control and Health Monitoring}, vol.~19, no.~1, pp. 55--69,
  2012.

\bibitem{Ray1978281}
W.~Ray, ``Some recent applications of distributed parameter systems theory —a
  survey,'' \emph{Automatica}, vol.~14, no.~3, pp. 281 -- 287, 1978.

\bibitem{Curtain2009}
R.~Curtain and K.~Morris, ``Transfer functions of distributed parameter
  systems: A tutorial,'' \emph{Automatica}, vol.~45, no.~5, pp. 1101 -- 1116,
  2009.

\bibitem{Demetriou2003}
M.~Demetriou and R.~Smith, \emph{Research Directions in Distributed Parameter
  Systems}, ser. Frontiers in applied mathematics.\hskip 1em plus 0.5em minus
  0.4em\relax Society for Industrial and Applied Mathematics (SIAM, 3600 Market
  Street, Floor 6, Philadelphia, PA 19104), 2003.

\bibitem{Preumont2011}
A.~Preumont, \emph{Vibration Control of Active Structures: An Introduction},
  N.~Jacobs, Ed.\hskip 1em plus 0.5em minus 0.4em\relax Springer, 2011.

\bibitem{fanson1990}
J.~L. Fanson and T.~K. Caughley, ``Positive position feedback control for large
  space structures,'' \emph{AIAA Journal}, vol.~28, no.~4, pp. 717--724, Apr.
  1990.

\bibitem{petersen2010}
I.~R. Petersen and A.~Lanzon, ``Feedback control of negative imaginary
  systems,'' \emph{IEEE Control System Magazine}, vol.~30, no.~5, pp. 54--72,
  2010.

\bibitem{balas1979}
M.~J. Balas, ``Direct velocity feedback control of large space structures1,''
  \emph{AIAA Journal of Guidance and Control}, vol.~2, no.~3, pp. 252--253,
  May-June 1979.

\bibitem{lanzon2008}
A.~Lanzon and I.~R. Petersen, ``Stability robustness of a feedback
  interconnection of systems with negative imaginary frequency response,''
  \emph{IEEE Transactions on Automatic Control}, vol.~53, no.~4, pp.
  1042--1046, 2008.

\bibitem{anderson-bk1973}
B.~D.~O. Anderson and S.~Vongpanitlerd, \emph{Network Analysis and Synthesis: A
  Modern Systems Approach}.\hskip 1em plus 0.5em minus 0.4em\relax Englewood
  Cliffs, N.J., USA: Prentice-Hall, 1973.

\bibitem{brogliato-bk2007}
B.~Brogliato, R.~Lozano, B.~Maschke, and O.~Egeland, \emph{Dissipative Systems
  Analysis and Control}, 2nd~ed., ser. Communications and Control
  Engineering.\hskip 1em plus 0.5em minus 0.4em\relax London, UK: Springer,
  2007.

\bibitem{Dong2007}
J.~Dong, S.~M. Salapaka, and P.~M. Ferreira, ``Robust {MIMO} control of a
  parallel kinematics nano-positioner for high resolution high bandwidth
  tracking and repetitive tasks,'' in \emph{Proc. 46th IEEE Conf. Decision and
  Control}, New Orleans, LA, Dec 2007, pp. 4495--4500.

\bibitem{Sebastian2005}
A.~Sebastian and S.~M. Salapaka, ``Design methodologies for robust
  nano-positioning,'' \emph{IEEE Transactions on Control Systems Technology},
  vol.~13, no.~6, pp. 868--876, 2005.

\bibitem{Michellod2006}
Y.~Michellod, P.~Mullhaupt, and D.~Gillet, ``Strategy for the control of a
  dual-stage nano-positioning system with a single metrology,'' in \emph{Proc.
  IEEE Conf. Robotics, Automation and Mechatronics}, Bangkok, June 2006, pp.
  1--8.

\bibitem{Messenger2009}
R.~K. Messenger, Q.~T. Aten, T.~W. McLain, and L.~L. Howell, ``Piezoresistive
  feedback control of a {MEMS} thermal actuator,'' \emph{Journal of
  Microelectromechanical Systems}, vol.~18, no.~6, pp. 1267--1278, 2009.

\bibitem{ElRifai2004}
K.~El~Rifai, O.~El~Rifai, and K.~Youcef-Toumi, ``On dual actuation in atomic
  force microscopes,'' in \emph{Proc. American Control Conference}, vol.~4,
  2004, pp. 3128--3133.

\bibitem{Yong2010}
Y.~K. Yong, B.~Ahmed, and S.~O.~R. Moheimani, ``Atomic force microscopy with a
  12-electrode piezoelectric tube scanner,'' \emph{Review of Scientific
  Instruments}, vol.~81, no.~3, p. 033701, March 2010.

\bibitem{hagen2010}
C.~Cai and G.~Hagen, ``Stability analysis for a string of coupled stable
  subsystems with negative imaginary frequency response,'' \emph{IEEE
  Transactions on Automatic Control}, vol.~55, no.~8, pp. 1958--1963, Aug.
  2010.

\bibitem{goh1995}
C.~J. Goh and T.~K. Caughley, ``On the stability problem caused by finite
  actuator dynamics in the collocated control of large space structures,''
  \emph{International Journal of Control}, vol.~41, no.~3, pp. 787--802, Mar.
  1995.

\bibitem{Pereira2011}
E.~Pereira, S.~S. Aphale, V.~Feliu, and S.~O.~R. Moheimani, ``Integral resonant
  control for vibration damping and precise tip-positioning of a single-link
  flexible manipulator,'' \emph{IEEE/ASME Transactions on Mechatronics},
  vol.~16, no.~2, pp. 232--240, 2011.

\bibitem{Halim2001}
D.~Halim and S.~O.~R. Moheimani, ``Spatial resonant control of flexible
  structures-application to a piezoelectric laminate beam,'' \emph{IEEE
  Transactions on Control Systems Technology}, vol.~9, no.~1, pp. 37--53, 2001.

\bibitem{Mahmood2008}
I.~A. Mahmood, S.~O.~R. Moheimani, and B.~Bhikkaji, ``Precise tip positioning
  of a flexible manipulator using resonant control,'' \emph{IEEE /ASME
  Transactions on Mechatronics}, vol.~13, no.~2, pp. 180--186, 2008.

\bibitem{Ahmed2011}
B.~Ahmed and H.~Pota, ``Dynamic compensation for control of a rotary wing {UAV}
  using positive position feedback,'' \emph{Journal of Intelligent and Robotic
  Systems}, vol.~61, no. 1-4, pp. 43--56, 2011.

\bibitem{Bhikkaji2012}
B.~Bhikkaji, S.~O.~R. Moheimani, and I.~R. Petersen, ``A negative imaginary
  approach to modeling and control of a collocated structure,'' \emph{IEEE/ASME
  Transactions on Mechatronics}, vol.~17, no.~4, pp. 717--727, 2012.

\bibitem{Schaft2011}
A.~van~der Schaft, ``Positive feedback interconnection of {H}amiltonian
  systems,'' in \emph{Proceedings of the 50th IEEE Conference on Decision and
  Control and European Control Conference (CDC-ECC)}, Orlando, FL, USA, Dec
  2011.

\bibitem{Opmeer2011}
M.~R. Opmeer, ``Infinite-dimensional negative imaginary systems,'' \emph{IEEE
  Transactions on Automatic Control}, vol.~56, no.~12, pp. 2973--2976, 2011.

\bibitem{xiong21010jor}
J.~Xiong, I.~R. Petersen, and A.~Lanzon, ``A negative imaginary lemma and the
  stability of interconnections of linear negative imaginary systems,''
  \emph{IEEE Transactions on Automatic Control}, vol.~55, no.~10, pp.
  2342--2347, 2010.

\bibitem{song2010}
Z.~Song, A.~Lanzon, S.~Patra, and I.~Petersen, ``Towards controller synthesis
  for systems with negative imaginary frequency response,'' \emph{IEEE
  Transactions on Automatic Control}, vol.~55, no.~6, pp. 1506--1511, June
  2010.

\bibitem{Song2012b}
\BIBentryALTinterwordspacing
Z.~Song, A.~Lanzon, S.~Patra, and I.~R. Petersen, ``A negative-imaginary lemma
  without minimality assumptions and robust state-feedback synthesis for
  uncertain negative-imaginary systems,'' \emph{Systems \& Control Letters},
  vol.~61, no.~12, pp. 1269 -- 1276, 2012. [Online]. Available:
  \url{http://www.sciencedirect.com/science/article/pii/S0167691112001557}
\BIBentrySTDinterwordspacing

\bibitem{Engelken2010a}
S.~Engelken, S.~Patra, A.~Lanzon, and I.~R. Petersen, ``Stability analysis of
  negative imaginary systems with real parametric uncertainty - the {SISO}
  case,'' \emph{IET Control Theory and Applications}, vol.~4, no.~11, pp. 2631
  -- 2638, 2010.

\bibitem{patra2010}
S.~Patra and A.~Lanzon, ``Stability analysis of interconnected systems with
  mixed negative-imaginary and small-gain properties,'' \emph{IEEE Transactions
  on Automatic Control}, vol.~56, no.~6, pp. 1395 -- 1400, 2011.

\bibitem{Xiong2012}
J.~Xiong, I.~R. Petersen, and A.~Lanzon, ``Finite frequency negative imaginary
  systems,,'' \emph{IEEE Transactions on Automatic Control}, vol.~57, pp.
  2917--2922, 2012.

\bibitem{Rao2012}
S.~Rao, ``An algebraic approach to the realization of lossless negative
  imaginary behaviors,'' \emph{SIAM Journal on Control and Optimization},
  vol.~50, no.~3, pp. 1700--1720, 2012.

\bibitem{Benner2012}
\BIBentryALTinterwordspacing
P.~Benner and M.~Voigt, ``Spectral characterization and enforcement of negative
  imaginariness for descriptor systems,'' April 2012, not. [Online]. Available:
  \url{\url{http://www.mpi-magdeburg.mpg.de/preprints/}}
\BIBentrySTDinterwordspacing

\bibitem{Angeli2006}
D.~Angeli, ``Systems with counterclockwise input-output dynamics,'' \emph{IEEE
  Transactions on Automatic Control}, vol.~51, no.~7, pp. 1130--1143, 2006.

\bibitem{Angeli2007}
------, ``Multistability in systems with counter-clockwise input output
  dynamics,'' \emph{IEEE Transactions on Automatic Control}, vol.~52, no.~4,
  pp. 596 --609, 2007.

\bibitem{Padthe2005}
A.~K. Padthe, J.~H. Oh, and D.~S. Bernstein, ``Counterclockwise dynamics of a
  rate-independent semilinear {D}uhem model,'' in \emph{Proceedings of the 44th
  IEEE Conference on Decision and Control}, Seville, Spain, December 2005, pp.
  8000--8005.

\bibitem{Hugher1986}
P.~Hughes, \emph{Spacecraft Attitude Dynamics}.\hskip 1em plus 0.5em minus
  0.4em\relax New York: Wiley, 1986.

\bibitem{Thomas1996}
T.~Gustafsson, ``On the design and implementation of a rotary crane
  controller,'' \emph{European Journal of Control}, vol.~2, no.~2, pp.
  166--175, 1996.

\bibitem{Choi1999}
S.~B. Choi, S.~S. Cho, H.~C. Shin, and H.~K. Kim, ``Quantitative feedback
  theory control of a single-link flexible manipulator featuring piezoelectric
  actuator and sensor,'' \emph{Smart Materials and Structures}, vol.~8, no.~3,
  pp. 338--349, 1999.

\bibitem{Yunfeng2003}
Y.~Li, R.~Horowitz, and R.~Evans, ``Vibration control of a {PZT} actuated
  suspension dual-stage servo system using a {PZT} sensor,'' \emph{IEEE
  Transactions on Magnetics}, vol.~39, no.~2, pp. 932 -- 937, 2003.

\bibitem{Sang2001}
S.-M. Suh, C.~C. Chung, and S.-H. Lee, ``Discrete-time {LQG/LTR} dual-stage
  controller design in magnetic disk drives,'' \emph{IEEE Transactions on
  Magnetics}, vol.~37, no.~4, pp. 1891 -- 1895, 2001.

\bibitem{Goh2001}
T.~B. Goh, Z.~Li, B.~M. Chen, T.~H. Lee, and T.~Huang, ``Design and
  implementation of a hard disk drive servo system using robust and perfect
  tracking approach,'' \emph{IEEE Transactions on Control Systems Technology},
  vol.~9, no.~2, pp. 221--233, 2001.

\bibitem{Devasia2012}
S.~Devasia, ``Time-optimal control with pre/post actuation for dual-stage
  systems,'' \emph{IEEE Transactions on Control Systems Technology}, vol.~20,
  no.~2, pp. 323--334, 2012.

\bibitem{orpacharapan2004}
C.~La-orpacharapan and L.~Y. Pao, ``Shaped time-optimal feedback control for
  disk-drive systems with back-electromotive force,'' \emph{IEEE Transactions
  on Magnetics}, vol.~40, no.~1, pp. 85--96, 2004.

\bibitem{Mabrok2012}
M.~A. Mabrok, A.~G. Kallapur, I.~R. Petersen, and A.~Lanzon, ``A stability
  result on the feedback interconnection of negative imaginary systems with
  poles at the origins,'' in \emph{Proceedings of the 2012 Australian Control
  Conference, Sydney, Australia}, 2012.

\bibitem{Mabrok2011b}
------, ``A new stability result for the feedback interconnection of negative
  imaginary systems with a pole at the origin.'' in \emph{Proceedings of the
  Conference Decision Control-European Control Conference, Orlando, Florida,
  USA}.\hskip 1em plus 0.5em minus 0.4em\relax IEEE, 2011, pp. 3753--3757.

\bibitem{Glover1996}
K.~Zhou, J.~C. Doyle, and K.~Glover, \emph{Robust and Optimal Control.}\hskip
  1em plus 0.5em minus 0.4em\relax Upper Saddle River, NJ: Prentice-Hall, Inc.,
  1996.

\bibitem{Koeber2006}
M.~Koeber, U.~Schäfer, and U.~Karlsruhe, ``The unique square root of a positive
  semidefinite matrix,'' \emph{International Journal of Mathematical Education
  in Science and Technology}, vol.~37, no.~8, pp. 990--992, 2006.

\bibitem{pota1995}
H.~R. Pota and T.~E. Alberts, ``Multivariable transfer functions for a slewing
  piezoelectric laminate beam,'' \emph{Journal of Dynamic Systems, Measurements
  and Control}, vol. 117, no.~2, pp. 352 -- 359, 1995.

\bibitem{Alberts1995}
T.~Alberts, T.~DuBois, and H.~Pota, ``Experimental verification of transfer
  functions for a slewing piezoelectric laminate beam,'' \emph{Control
  Engineering Practice}, vol.~3, no.~2, pp. 163 -- 170, 1995.

\bibitem{Schmitz:1985}
E.~Schmitz, ``{Experiments on the End-Point Position Control of a Very Flexible
  One-Link Manipulator},'' PhD, Stanford University, Stanford, CA 94305, June
  1985, also published as SUDAAR 548.

\bibitem{meirovitch1975}
L.~Meirovitch, \emph{{Elements of Vibrational Analysis}}.\hskip 1em plus 0.5em
  minus 0.4em\relax USA: McGraw-Hill, 1975.

\bibitem{Moheimani2006}
S.~O.~R. Moheimani, B.~J.~G. Vautier, and B.~Bhikkaji, ``Experimental
  implementation of extended multivariable ppf control on an active
  structure,'' \emph{IEEE Transactions on Control Systems Technology}, vol.~14,
  no.~3, pp. 443--455, 2006.

\bibitem{chen1998h}
B.~Chen, \emph{$H_{\infty}$ Control and Its Applications}, ser. Lecture Notes
  in Control and Information Sciences.\hskip 1em plus 0.5em minus 0.4em\relax
  Springer, New York, London, 1998.

\end{thebibliography}
\balance

\section{Appendix A}

In this appendix, we present state space    results (some of which are of independent interest) using a particular
state space representation of the plant transfer function matrix
$G(s)$. The first  stability result  is
Theorem \ref{Pre_main_re}. We  will subsequently  use Theorem
\ref{Pre_main_re} to prove
Theorem  \ref{min:result}. We also present a number of corollaries which will be used to prove the remaining  results of the paper.  %Theorem  %\ref{min:result:clo1} and  Corollaries \ref{min:result:clo3} -\ref{min:result:clo2}.
%\subsubsection{Appendix A}\label{app:a}

We first consider an  NI square  transfer function matrix $G(s)$
with a
 minimal state space realization of the form
\begin{align}
\label{eq:xdot}
&\dot{x}(t) = A x(t)+B u(t), \nonumber \\
%\label{eq:y}
&y(t) = C x(t),
\end{align}%
where, $A \in \mathbb{R}^{n \times n},B \in \mathbb{R}^{n \times
m},C \in \mathbb{R}^{m \times n},$ and
%Consider the following class of systems $\digamma(s)=\{G(s)\}$ such
%that $G(s)=\begin{bmatrix}
%\begin{array}{c|c}
%A & B \\ \hline C & D
%\end{array}
%\end{bmatrix}$ can be represented in  a block diagonal form as
%following
\begin{align}
A =&
\begin{bmatrix}
A_{1} & 0 & 0 \\
0 & A_{2} & 0 \\
0 & 0 & A_{3}%
\end{bmatrix}%
;\:\: B =
\begin{bmatrix}
B_{1} \\
B_{2} \\
B_{3}
\end{bmatrix};\:\
C=
\begin{bmatrix}
C_{1} & C_{2} & C_{3}%
\end{bmatrix},\label{J_C_F}
\end{align}
 $A_1 \in \mathbb{R}^{n_1 \times n_1},A_2 \in \mathbb{R}^{n_2
\times n_2},A_3 \in \mathbb{R}^{2k \times 2k}, B_1 \in
\mathbb{R}^{n_1 \times m},B_2 \in \mathbb{R}^{n_2 \times m},
B_{3}=\begin{bmatrix}
B_{3a} \\
B_{3b}
\end{bmatrix},B_{3a}
\in \mathbb{R}^{k \times m},B_{3b} \in \mathbb{R}^{k \times m} ,C_1
\in \mathbb{R}^{m \times n_1},C_2 \in \mathbb{R}^{m \times n_2},
C_{3}=
\begin{bmatrix}
C_{3a} & C_{3b}
\end{bmatrix}, C_{3a} \in \mathbb{R}^{m \times k}, C_{3b} \in \mathbb{R}^{m
\times k},$ $A_1$ is nonsingular, $A_{2}=0$, and
\begin{align}\label{matA3}
A_{3}=\begin{bmatrix}
0 & I_{k \times k} \\
0 & 0%
\end{bmatrix}.
\end{align}

We also consider an SNI  transfer function matrix $\bar{G}(s)$ with a
minimal state space realization
\begin{align}
\label{SNI:eq:xdot}
&\dot{x}(t) = \bar{A} x(t)+\bar{B} u(t), \nonumber \\
%\label{SNI:eq:y}
&y(t) = \bar{C} x(t)+\bar{D} u(t),
\end{align}%
where $\bar{A} \in \mathbb{R}^{\bar{n} \times \bar{n}},\bar{B} \in
\mathbb{R}^{\bar{n} \times m},\bar{C} \in \mathbb{R}^{m \times
\bar{n}},$ and $\bar{D} \in \mathbb{R}^{m \times m}.$

\begin{remark}
We allow any of the matrices in these  models to have zero dimensions.
In sequel,  any matrix  with zero dimension is regarded as being of
full rank.

\end{remark}

The corresponding transfer function matrix for the state space
realization  \eqref{eq:xdot}-\eqref{J_C_F} is given as follows:
\begin{align}
G(s)=&C_{1}(sI-A_{1})^{-1}B_{1}+C_{2}(sI-A_{2})^{-1}B_{2}\notag\\&+C_{3}(sI-A_{3})^{-1}B_{3}\notag\\
=&C_{1}(sI-A_{1})^{-1}B_{1}+\frac{C_{2}B_{2}+C_{3}B_{3}}{s}+\frac{C_{3a}B_{3b}}{s^2}.\label{parelle-g}
\end{align}

The following theorem  provides  a necessary and sufficient
condition for the stability of the
 positive-feedback interconnection between the NI
transfer function matrix $G(s),$ with state space realization
(\ref{eq:xdot})-(\ref{J_C_F}), and the  SNI transfer function matrix
$\bar{G}(s),$ with state space realization
(\ref{SNI:eq:xdot}). In order to present this
theorem, we define the following matrix
\begin{align}\label{N-g-lemma-1}
N=\mathcal{P}\left(\bar{G}(0),\begin{bmatrix}
C_{2} & C_{3a}%
\end{bmatrix}\right). %\bar{G}%
%(0)-\bar{G}(0)%
%\begin{bmatrix}
%C_{2} & C_{3a}%
%\end{bmatrix}%
%\Xi^{-1}\begin{bmatrix}
%C_{2}^T \\
%C_{3a}^T%
%\end{bmatrix}%
%\bar{G}(0),
\end{align}
 Also, the matrix
 \begin{align}\label{segma-N}
 \Xi=%
\begin{bmatrix}
C_{2}^{T} \\
C_{3a}^{T}%
\end{bmatrix}%
\bar{G}(0)%
\begin{bmatrix}
C_{2} & C_{3a}%
\end{bmatrix}%
\end{align}
 is assumed to be non-singular.
%================================================================================
%=================================================================================
%
%\begin{assumption}
%\label{assm:siso3} We assume  that $k\neq0$. Also, the matrix $\Xi$
%in \eqref{segma-N} is assumed to be non-singular and the matrix  $N$
%in \eqref{N-g-lemma-1} is assumed to be either positive semidefinite
%or negative semidefinite.
%\end{assumption}
In addition,
we will use the following condition in the theorem which follows:
\begin{equation}\label{Eq:Con1:th2}
   \begin{bmatrix}
C_{2}^{T} \\
C_{3a}^{T}%
\end{bmatrix}%
\bar{G}(0)%
\begin{bmatrix}
C_{2} & C_{3a}%
\end{bmatrix}<0.
\end{equation} %where $\Xi$ defined as \eqref{segma-N}.
Also, for the case in which $N$ is positive semidefinite, we will
use the condition
\begin{equation}\label{Eq:Con2:th2}
    I+N^{\frac{1}{2}}C_{1}A_{1}^{-1}B_{1}N^{\frac{1}{2}}-N^{\frac{1}{2}}C_{3b}P_{2}^{-1}C_{3b}^{T}N^{\frac{1}{2}}>0,
\end{equation} where  $P_{2}=C_{3a}^{T}B_{3b}^{T}(B_{3b}B_{3b}^{T})^{-1},$ which
will be shown to be  symmetric and positive definite in Lemma
\ref{NI-lemma-jordon-f}. Moreover, for the case in which $N$ is
negative  semidefinite, we will use the condition
\begin{equation}\label{Eq:Con3:th2}
    \det(I-\tilde{N}C_{1}A_{1}^{-1}B_{1}\tilde{N}+\tilde{N}C_{3b}P_{2}^{-1}C_{3b}^{T}\tilde{N})\neq0,
\end{equation} where  $\tilde{N}=(-N)^{\frac{1}{2}}$.

\begin{theorem}\label{Pre_main_re}
Suppose   that $k\neq0$ and the matrix $\Xi$ in \eqref{segma-N} is
non-singular. Also, suppose that  the transfer function matrix
$G(s),$ with the minimal state space realization
(\ref{eq:xdot}), is NI and  the transfer function
matrix $\bar{G}(s),$ with the minimal state space realization
(\ref{SNI:eq:xdot}), is SNI.  If $N$ is positive
semidefinite, then the closed-loop positive-feedback interconnection
between $G(s)$ and $\bar{G}(s)$  is internally stable   if and only
if conditions \eqref{Eq:Con1:th2} and \eqref{Eq:Con2:th2} are
satisfied. Also, if $N$ is negative semidefinite, then the
closed-loop positive-feedback interconnection between $G(s)$ and
$\bar{G}(s)$  is internally stable   if and only if conditions
\eqref{Eq:Con1:th2} and \eqref{Eq:Con3:th2} are satisfied.
\end{theorem}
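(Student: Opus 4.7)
The plan is to characterize internal stability directly by studying when the closed-loop state matrix $\breve{A}$ in \eqref{mat:A:CL} is Hurwitz, exploiting the block-triangular structure of the realization in \eqref{J_C_F} and \eqref{matA3}. The realization separates $G(s)$ into three parts: a proper stable (or imaginary-axis-mode) part governed by the non-singular $A_1$, a single-integrator part $A_2 = 0$, and a double-integrator part given by the $2k \times 2k$ Jordan block $A_3$. Comparing with \eqref{parelle-g}, these correspond respectively to the Laurent coefficients $G_0 = C_1 A_1^{-1} B_1$ (up to sign), $G_1 = C_2 B_2 + C_3 B_3$, and $G_2 = C_{3a} B_{3b}$ from \eqref{f_G0}. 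I would first invoke the state space characterization of the NI property to obtain two things simultaneously: a Lyapunov matrix $P_1 = P_1^T \geq 0$ with $A_1 P_1 + P_1 A_1^T \leq 0$ and $B_1 + A_1 P_1 C_1^T = 0$ governing the $(A_1,B_1,C_1)$ block, and structural constraints on the $A_3$ block forcing $G_2 \succeq 0$ symmetric and $P_2 = C_{3a}^T B_{3b}^T(B_{3b}B_{3b}^T)^{-1}$ symmetric positive definite, as will be established in Lemma \ref{NI-lemma-jordon-f}.

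Next I would construct a Lyapunov function for the closed-loop system by combining $P_1$ with the SNI storage function of $\bar{G}(s)$ and a correction term built from $P_2$ to handle the double-pole block. Because the free-body states enter the loop only through the output directions $[C_2 \; C_{3a}]$, the coupling to the controller at low frequencies is mediated entirely by the bilinear form $\Xi$ in \eqref{segma-N}. A congruence/Schur-complement decomposition using $[C_2 \; C_{3a}]$ and its orthogonal complement then diagonalizes the critical part of the closed-loop generator into (i) the block $\Xi$ and (ii) the reduced matrix $N$ from \eqref{N-g-lemma-1}, which couples to the $A_1$ dynamics through $C_1 A_1^{-1} B_1$ and to the remaining $A_3$ directions through $C_{3b} P_2^{-1} C_{3b}^T$. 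The definiteness of the first block is exactly \eqref{Eq:Con1:th2}, while the stability of the second reduces, after another Schur complement, to \eqref{Eq:Con2:th2} when $N \succeq 0$.

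To handle the case $N \preceq 0$, I would repeat the argument with the substitution $\tilde{N} = (-N)^{1/2}$; here the sign flip prevents us from expressing the reduced condition as a positivity constraint, so it becomes the non-singularity condition \eqref{Eq:Con3:th2}. The strict inequality in the SNI property of $\bar{G}(s)$ at all nonzero $\omega$ rules out any imaginary-axis eigenvalues of $\breve{A}$ off the origin, and the non-singularity of the reduced matrix rules out eigenvalues at the origin, giving Hurwitzness. For necessity in each case I would argue by contrapositive: if \eqref{Eq:Con1:th2} fails, then a direction in the range of $[C_2 \; C_{3a}]$ produces an eigenvector of $\breve{A}$ at the origin; if \eqref{Eq:Con2:th2} or \eqref{Eq:Con3:th2} fails, the corresponding breakdown of the Schur complement exhibits an imaginary-axis eigenvalue of $\breve{A}$.

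The main obstacle will be the double-pole Jordan block $A_3$. Because $A_3$ is defective, the usual NI Lyapunov storage function is not directly applicable, and we must carefully use the structural identities forced on $B_{3a}, B_{3b}, C_{3a}, C_{3b}$ by Conditions 3 and 4 of Definition \ref{Def:NI} to obtain the decomposition that yields $P_2 \succ 0$. Tracking how $C_{3b}$ (which couples the $A_3$ block to the output but not through $G_2$) enters the stability condition, and showing that it appears through the symmetric correction $C_{3b} P_2^{-1} C_{3b}^T$ inside \eqref{Eq:Con2:th2}/\eqref{Eq:Con3:th2}, is the technically most delicate part of the argument and the one where Lemma \ref{NI-lemma-jordon-f} does the heavy lifting.
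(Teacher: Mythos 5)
Your overall architecture coincides with the paper's: form the closed-loop matrix $\breve{A}$, build a Lyapunov matrix $T$ from the plant storage matrices $P_1,P_2$ and the SNI storage matrix $\bar{P}$, use Schur complements along the directions $\begin{bmatrix}C_2 & C_{3a}\end{bmatrix}$ to split positivity of $T$ into negative definiteness of $\Xi$ (condition \eqref{Eq:Con1:th2}) plus a reduced condition in $N$ (condition \eqref{Eq:Con2:th2} or \eqref{Eq:Con3:th2}), exclude nonzero imaginary-axis eigenvalues via $\det(I-G(j\omega)\bar{G}(j\omega))\neq 0$ from the NI/SNI frequency-domain properties, and exclude an eigenvalue at the origin via a determinant computation for $A+B\bar{G}(0)C$. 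This is exactly the paper's proof strategy.

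One step would fail if executed as written. You propose to extract from ``the state space characterization of the NI property'' the identity $B_1+A_1P_1C_1^T=0$ for the $(A_1,B_1,C_1)$ block. But that subsystem $C_1(sI-A_1)^{-1}B_1$ need not be NI on its own: the generalized NI condition only constrains the sum $j\left(G_a(j\omega)-G_a(j\omega)^*\right)+(G_1+G_1^T)/\omega$, so a negative contribution from the stable part can be masked by the $(G_1+G_1^T)/\omega$ term, and no clean NI-lemma identity is available for that block in isolation. The paper's Lemma \ref{NI-lemma-jordon-f} instead applies the positive real lemma to $R(s)=sG(s)$ over the \emph{whole} realization, which yields the weaker relation $P_1B_1-A_1^TC_1^T=-L_1^TW$ together with $P_2B_{3b}=C_{3a}^T$; the clean identity $N^{\frac{1}{2}}C_1A_1^{-1}B_1N^{\frac{1}{2}}=-N^{\frac{1}{2}}C_1P_1^{-1}C_1^TN^{\frac{1}{2}}$ is recovered only after sandwiching with $N^{\frac{1}{2}}$ (or $\tilde N$), using $MN^{\frac{1}{2}}=0$ with $M=W+L_1P_1^{-1}C_1^T$, which in turn rests on $N\begin{bmatrix}C_2 & C_{3a}\end{bmatrix}=0$ and the null-space relation of Lemma \ref{b2-b3b-c2-c3a}. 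Since your identity is only ever used inside such a sandwich, the final conditions come out the same, but the lemma you lean on is not true in the form you state it. A smaller point: your necessity argument (an explicit eigenvector of $\breve{A}$ at the origin when \eqref{Eq:Con1:th2} fails) is not obviously correct, since failure of $\Xi<0$ with $\Xi$ nonsingular need not place a closed-loop eigenvalue exactly at the origin; the paper instead gets necessity from the ``if and only if'' character of the $T>0$ claim combined with the Lyapunov inequality $T\breve{A}+\breve{A}^TT\leq 0$.
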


The proof of this theorem is given at the end of this appendix.

\begin{corollary}\label{Pre_main_re-G1-G2-0}
Suppose that the matrix $\Xi$ in \eqref{segma-N} is non-singular and
the matrix  $N$ in \eqref{N-g-lemma-1} satisfies $N\begin{bmatrix}
C_{1} & C_{3b}
\end{bmatrix}=0$.
Also suppose that the transfer function matrix $G(s),$ with the
minimal state space realization (\ref{eq:xdot}), is NI
and   the transfer function matrix $\bar{G}(s),$ with the minimal
state space realization (\ref{SNI:eq:xdot}), is
SNI. Then the closed-loop positive-feedback interconnection  between
$G(s)$ and $\bar{G}(s)$ is internally stable   if and only if
condition \eqref{Eq:Con1:th2} is satisfied.
\end{corollary}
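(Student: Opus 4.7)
The plan is to apply Theorem~\ref{Pre_main_re} directly and to show that the additional hypothesis $N\begin{bmatrix}C_{1} & C_{3b}\end{bmatrix}=0$ trivializes the auxiliary conditions \eqref{Eq:Con2:th2} and \eqref{Eq:Con3:th2}, so that the necessary and sufficient condition for internal stability collapses to \eqref{Eq:Con1:th2} alone.

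First I would record that $\bar{G}(0)$ is a symmetric real matrix. This follows from the SNI definition: continuity together with the strict inequality $j(\bar{G}(j\omega)-\bar{G}(j\omega)^{\ast})>0$ for all $\omega>0$ forces $j(\bar{G}(0)-\bar{G}(0)^{T})=0$ in the limit $\omega\to 0^{+}$. Consequently $\Xi$ in \eqref{segma-N} is symmetric, and therefore so is the matrix $N$ defined in \eqref{N-g-lemma-1}.

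Next I would invoke Theorem~\ref{Pre_main_re} and split according to the sign of $N$. In the case $N\geq 0$, let $M=N^{\frac{1}{2}}$ denote the symmetric positive-semidefinite square root. The hypothesis $NC_{1}=0$ gives $\operatorname{tr}(C_{1}^{T}NC_{1})=\operatorname{tr}(C_{1}^{T}M^{T}MC_{1})=0$, which forces $MC_{1}=0$; the same Frobenius-norm argument yields $MC_{3b}=0$. Substituting these vanishings into \eqref{Eq:Con2:th2} collapses its left-hand side to $I$, so the condition reduces to $I>0$, which holds trivially. The case $N\leq 0$ is dispatched identically by applying the same argument to $\tilde{N}=(-N)^{\frac{1}{2}}$: then $\tilde{N}C_{1}=0$ and $\tilde{N}C_{3b}=0$, so \eqref{Eq:Con3:th2} reduces to $\det(I)=1\neq 0$.

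In either sign case the iff criterion of Theorem~\ref{Pre_main_re} degenerates to the single condition \eqref{Eq:Con1:th2}, which proves the corollary. The main subtlety is the square-root step: one must lift the kernel inclusion $\operatorname{range}\bigl([C_{1}\ C_{3b}]\bigr)\subseteq\ker N$ to the symmetric square root of $N$ (respectively $-N$), and this is accomplished by the trace/Frobenius-norm identity above. Beyond this, no further state-space analysis is required, since Theorem~\ref{Pre_main_re} already supplies the hard direction of the argument.
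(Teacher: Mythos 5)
There is a genuine gap: your argument only covers the cases $N\geq 0$ and $N\leq 0$, but the corollary makes no semidefiniteness assumption on $N$ whatsoever. Since $N=\mathcal{P}(\bar{G}(0),[C_{2}\ C_{3a}])$ is a compression of the (symmetric but generally indefinite) matrix $\bar{G}(0)$, it can perfectly well be indefinite, and in that case Theorem~\ref{Pre_main_re} simply does not apply and your black-box invocation of it yields nothing. Dropping the sign-definiteness hypothesis is in fact the entire point of this corollary: it is the tool used later to prove Theorems~\ref{min:result:clo3.1} and~\ref{min:result:clo2.1}, which are explicitly advertised as relaxing the sign condition on $N_2$ and $N_1$. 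A proof that reinstates that condition defeats the purpose.

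The paper therefore does not cite Theorem~\ref{Pre_main_re} as a black box but re-opens its proof. The two places where the sign of $N$ entered are both neutralized by the hypothesis $N\begin{bmatrix}C_{1}& C_{3b}\end{bmatrix}=0$ directly, with no square roots needed: the positivity of the Lyapunov candidate $T$ reduces (via the Schur complement step) to $P_{f}-C_{f}^{T}NC_{f}>0$ with $C_{f}=\begin{bmatrix}C_{1}& C_{3b}\end{bmatrix}$, which is automatic since $C_{f}^{T}NC_{f}=0$ and $P_{f}>0$; and the determinant condition collapses in \eqref{det-G1-G2-0} to $\det\left(\begin{bmatrix}A_{1}&0\\0&I\end{bmatrix}+\begin{bmatrix}B_{1}\\B_{3a}\end{bmatrix}N\begin{bmatrix}C_{1}&C_{3b}\end{bmatrix}\right)=\det\begin{bmatrix}A_{1}&0\\0&I\end{bmatrix}\neq 0$. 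For what it is worth, the parts of your argument that do apply are sound: $\bar{G}(0)$ is indeed symmetric (this also follows from Lemma~\ref{SNI-lemma}, since $\bar{G}(0)=\bar{D}+\bar{C}\bar{P}^{-1}\bar{C}^{T}$), and your trace argument correctly lifts $NC_{1}=0$ to $N^{\frac{1}{2}}C_{1}=0$ when $N\geq 0$. But to close the gap you must handle indefinite $N$, which requires going back inside the proof of Theorem~\ref{Pre_main_re} rather than quoting its statement.
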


The proof of this corollary is given at the end of this appendix.

%of Corollary \ref{Pre_main_re-G1-G2-0}
%\emph{Proof:} The proof of this  corollary will proceeds in an
%almost identical fashion to the proof of Theorem \ref{Pre_main_re}.
%Indeed, we first state the following claim:
%
%\emph{Claim 2}: Assume that the matrix  $N$ in \eqref{N-g-lemma-1}
%satisfies $N\begin{bmatrix} C_{1} & C_{3b}
%\end{bmatrix}=0$, then  $T>0$ if and only if \eqref{Eq:Con1:th2} is satisfied.
%
%This claim is equivalent to Claim 1 in Theorem \ref{Pre_main_re}
%when we relax the conditions on the matrix $N$. The proof of this
%claim is similar to the proof of Claim 1 in the proof of Theorem
%\ref{Pre_main_re} since \eqref{Eq:shur:cond2-th2} is automatically
%satisfied in the case when $N\begin{bmatrix} C_{1} & C_{3b}
%\end{bmatrix}=0$.
%
%Also,  the determinant condition in \eqref{det_eq3} will be
%automatically satisfied using the fact $N\begin{bmatrix} C_{1} &
%C_{3b}
%\end{bmatrix}=0$ in  \eqref{det-G1-G2-0}. The proof of the corollary then follows as in the proof of Theorem \ref{Pre_main_re}. \hfill $\blacksquare$

The following corollary considers the  case when $n_2=0$ and
$k\neq0$;
 i.e., the matrix $A$  in \eqref{J_C_F} has  the block diagonal
 form $A=\begin{bmatrix}
A_{1} & 0 \\
 0 & A_{3}%
\end{bmatrix}.$  In the   case when $n_2=0$, the matrix $N$ in \eqref{N-g-lemma-1} will
be given by
\begin{align}\label{N-g-croll-2}
N=\mathcal{P}(\bar{G}(0),C_{3a}),%\bar{G}(0)-\bar{G}(0)C_{3a}\left( C_{3a}^{T}\bar{G}%
%(0)C_{3a}\right) ^{-1}C_{3a}^{T}\bar{G}(0)
 \end{align}
where we assume that  $ C_{3a}^{T}\bar{G}(0)C_{3a}$ is non-singular.

%\begin{assumption}
%\label{assm:siso3:crol7} The matrix $C_{3a}^{T}\bar{G}(0)C_{3a}$ is
%assumed to be non-singular. Also, the matrix $N$ in
%\eqref{N-g-croll-2} is assumed to be either positive semidefinite or
%negative semidefinite.
%\end{assumption}

We will use the following conditions in the next corollary  which
 correspond to conditions
\eqref{Eq:Con1:th2}-\eqref{Eq:Con3:th2} in Theorem
\ref{Pre_main_re}. The first condition to be considered is
\begin{equation}\label{Eq:Con1:crol7}
    C_{3a}^{T}\bar{G}(0)C_{3a}<0.
\end{equation}
Also, for the case in which $N$ is positive semidefinite, we will
use the condition
\begin{equation}\label{Eq:Con2:crol7}
    I+N^{\frac{1}{2}}C_1A_{1}^{-1}B_{1}N^{\frac{1}{2}}-N^{\frac{1}{2}%
}C_{3b}P_{2}^{-1}C_{3b}^{T}N^{\frac{1}{2}}>0,
\end{equation}  where  $P_{2}=C_{3a}^{T}B_{3b}^{T}(B_{3b}B_{3b}^{T})^{-1}$. Moreover, for the case in which $N$ is
negative  semidefinite, we will use the condition
\begin{equation}\label{Eq:Con3:crol7}
    \det(I-\tilde{N}C_1A_{1}^{-1}B_{1}\tilde{N}+\tilde{N}C_{3b}P_{2}^{-1}C_{3b}^{T}\tilde{N})\neq0,
\end{equation} where  $\tilde{N}=(-N)^{\frac{1}{2}}$.

%==========================
%==========================
%Corollary 33333333....222222222222==
%==========================
%==========================

\begin{corollary}\label{min:result:clo2-jour}
Suppose that the matrix $C_{3a}^{T}\bar{G}(0)C_{3a}$ is
non-singular, $k\neq0$, and $n_2=0$. Also, suppose that   the
transfer function matrix $G(s),$ with the minimal state space
realization (\ref{eq:xdot}) is NI and the transfer
function matrix $\bar{G}(s),$ with the minimal state space
realization in (\ref{SNI:eq:xdot}), is SNI. If $N$
in \eqref{N-g-croll-2} is positive semidefinite, then the
closed-loop positive-feedback interconnection  between $G(s)$ and
$\bar{G}(s)$ is internally stable   if and only if conditions
\eqref{Eq:Con1:crol7} and \eqref{Eq:Con2:crol7} are satisfied. Also,
if $N$ in \eqref{N-g-croll-2} is negative semidefinite, then the
closed-loop positive-feedback interconnection between $G(s)$ and
$\bar{G}(s)$ is internally stable   if and only if conditions
\eqref{Eq:Con1:crol7} and \eqref{Eq:Con3:crol7} are satisfied.

\end{corollary}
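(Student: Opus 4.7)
The plan is to derive Corollary \ref{min:result:clo2-jour} as a direct specialization of Theorem \ref{Pre_main_re} to the case $n_2=0$, $k\neq 0$. Since the corollary's hypotheses on $G(s)$ and $\bar{G}(s)$ (NI, SNI, minimal realization \eqref{eq:xdot}, \eqref{SNI:eq:xdot}) are exactly those of the theorem, the task reduces to verifying that each piece of the theorem's statement simplifies correctly when the middle block is absent.

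First I would set up the degenerate realization. Taking $n_2=0$ in \eqref{J_C_F}, the blocks $A_2$, $B_2$, and $C_2$ all become zero-dimensional, and (by the convention stated in the Remark following \eqref{SNI:eq:xdot}) they are treated as full-rank empty matrices. The realization therefore collapses to
\begin{align*}
A = \begin{bmatrix} A_1 & 0 \\ 0 & A_3 \end{bmatrix}, \quad B = \begin{bmatrix} B_1 \\ B_3 \end{bmatrix}, \quad C = \begin{bmatrix} C_1 & C_3 \end{bmatrix},
\end{align*}
with $A_3$ still of the form \eqref{matA3}, so that $G(s)$ retains its double pole at the origin since $k\neq0$. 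Under this specialization the parallel decomposition \eqref{parelle-g} loses its $\frac{C_2 B_2}{s}$ contribution, which is consistent with the corollary's setting.

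Next I would translate the hypotheses and conditions. The block row $\begin{bmatrix} C_2 & C_{3a}\end{bmatrix}$ reduces to $C_{3a}$, so the matrix $\Xi$ in \eqref{segma-N} collapses to $C_{3a}^{T}\bar{G}(0)C_{3a}$, whose nonsingularity is exactly the hypothesis assumed in the corollary; similarly $N$ in \eqref{N-g-lemma-1} collapses to $\mathcal{P}(\bar{G}(0),C_{3a})$, which is the matrix in \eqref{N-g-croll-2}. Condition \eqref{Eq:Con1:th2} becomes precisely \eqref{Eq:Con1:crol7}, while \eqref{Eq:Con2:th2} and \eqref{Eq:Con3:th2} already contain no $C_2$-dependent terms and therefore coincide verbatim with \eqref{Eq:Con2:crol7} and \eqref{Eq:Con3:crol7}. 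Note in particular that the expression $P_2 = C_{3a}^{T}B_{3b}^{T}(B_{3b}B_{3b}^{T})^{-1}$ is unaffected by the removal of the middle block, so it remains symmetric and positive definite by Lemma \ref{NI-lemma-jordon-f}.

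Finally, applying Theorem \ref{Pre_main_re} to this reduced realization immediately yields both equivalences of the corollary: the positive-semidefinite branch gives the first ``if and only if,'' and the negative-semidefinite branch gives the second. There is no substantive obstacle here; the proof is almost entirely bookkeeping, and the only minor point requiring care is to confirm that the empty-matrix convention for $C_2$ genuinely identifies the block-row $\begin{bmatrix} C_2 & C_{3a}\end{bmatrix}$ with $C_{3a}$ in every formula appearing in Theorem \ref{Pre_main_re}, so that no spurious term is dropped when passing from the general case to the $n_2=0$ case.
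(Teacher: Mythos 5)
Your proposal is correct and matches the paper's own argument: the paper likewise proves this corollary by observing that with $n_2=0$ the conditions \eqref{Eq:Con1:th2}--\eqref{Eq:Con3:th2} reduce to \eqref{Eq:Con1:crol7}--\eqref{Eq:Con3:crol7} and then running the proof of Theorem \ref{Pre_main_re} for this special case, with the Lyapunov block $P$ losing its middle zero block. Your explicit check that the empty-matrix convention identifies $\begin{bmatrix} C_2 & C_{3a}\end{bmatrix}$ with $C_{3a}$ (hence $\Xi$ with $C_{3a}^{T}\bar{G}(0)C_{3a}$ and $N$ with \eqref{N-g-croll-2}) is exactly the bookkeeping the paper leaves implicit.
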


The proof of this corollary is given at the end of this appendix.

%of Corollary \ref{min:result:clo2-jour}:
%\emph{Proof: } Note that for the system
%\eqref{eq:xdot}-\eqref{J_C_F} corresponding to the case of this
%corollary, the  conditions \eqref{Eq:Con1:th2}-\eqref{Eq:Con3:th2}
%in Theorem \ref{Pre_main_re}
% reduce to conditions \eqref{Eq:Con1:crol7}-\eqref{Eq:Con3:crol7}. Then the
%proof of the corollary proceeds in an identical fashion to the proof
%of Theorem \ref{Pre_main_re} for the special case being considered,
%where the matrix $P$ defined in \eqref{Eq:P:NI:th2} becomes a matrix
%of the form $P=\begin{bmatrix}
%P_{1} & 0 &0 \\
%0 & 0 &0 \\
%0 & 0& P_2
%\end{bmatrix}$. \hfill $\blacksquare$

%==========================
%==========================
%==========================
%==========================
%==========================

The next corollary considers the case  when $n\neq0$ and  $k=0$;
 i.e., the $A$ matrix in the minimal state realization of $G(s)$  \eqref{eq:xdot}-\eqref{J_C_F} has the   block diagonal
 form $A=\begin{bmatrix}
A_{1} & 0  \\
 0 & A_{2}%
\end{bmatrix}$. In this  case, when $n\neq0$ and $k=0$, the matrix $N$ in \eqref{N-g-lemma-1} will
be given by
\begin{align}\label{N-g-croll-1}
N=\mathcal{P}(\bar{G}(0),C_{2})%\bar{G}(0)-\bar{G}(0)C_{2}(C_{2}^{T}\bar{G}(0)C_{2})^{-1}C_{2}^{T}\bar{G}(0),
 \end{align}
where the matrix $C_{2}^{T}\bar{G}(0)C_{2}$ is assumed to be
non-singular.

%\begin{assumption}
%\label{assm:siso3:crol6} The matrix $C_{2}^{T}\bar{G}(0)C_{2}$ is
%assumed to be non-singular. Also, the matrix $N$ in
%\eqref{N-g-croll-1} is assumed to be either positive semidefinite or
%negative semidefinite.
%\end{assumption}

We will use the following conditions in the next corollary  which
 corresponds to conditions
\eqref{Eq:Con1:th2}-\eqref{Eq:Con3:th2} in Theorem
\ref{Pre_main_re}. The first condition to be considered is
\begin{equation}\label{Eq:Con1:crol6}
    C_{2}^{T}\bar{G}(0)C_{2} <0.
\end{equation}
Also, for the case in which $N$ in \eqref{N-g-croll-1} is positive
semidefinite, we will use the condition
\begin{equation}\label{Eq:Con2:crol6}
    I+N^{\frac{1}{2}}C_1A_{1}^{-1}B_{1}N^{\frac{1}{2}}>0.
\end{equation}
 Moreover, for the case in which $N$ in
\eqref{N-g-croll-1} is negative  semidefinite, we will use the
condition
\begin{equation}\label{Eq:Con3:crol6}
    \det(I-\tilde{N}C_1A_{1}^{-1}B_{1}\tilde{N})\neq0,
\end{equation}where  $\tilde{N}=(-N)^{\frac{1}{2}}$.

\begin{corollary}\label{min:result:clo1-jour}
Suppose that $C_{2}^{T}\bar{G}(0)C_{2}$ is non-singular, $n_2\neq0$,
and $k=0$. Also, suppose that  the transfer function matrix $G(s),$
with the minimal state space realization
(\ref{eq:xdot})  is NI and  the transfer function
matrix $\bar{G}(s),$ with the minimal state space realization in
(\ref{SNI:eq:xdot}), is SNI. If $N$ in
\eqref{N-g-croll-1} is positive semidefinite, then the closed-loop
positive-feedback interconnection  between $G(s)$ and $\bar{G}(s)$
is internally stable   if and only if conditions
\eqref{Eq:Con1:crol6} and \eqref{Eq:Con2:crol6} are satisfied. Also,
if $N$ in \eqref{N-g-croll-1} is negative semidefinite, then the
closed-loop positive-feedback interconnection between $G(s)$ and
$\bar{G}(s)$ is internally stable   if and only if conditions
\eqref{Eq:Con1:crol6} and \eqref{Eq:Con3:crol6} are satisfied.
\end{corollary}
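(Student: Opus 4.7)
The plan is to derive Corollary~\ref{min:result:clo1-jour} as the $k=0$ degeneration of Theorem~\ref{Pre_main_re}. Because Theorem~\ref{Pre_main_re} is stated under the hypothesis $k\neq0$, one cannot simply invoke it; instead I would verify that every step in its proof remains valid when the $A_3$-block is absent, so that the argument specializes cleanly to the present situation.

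First I would fix the state-space realization of $G(s)$ in the form~\eqref{eq:xdot}--\eqref{J_C_F} but with $A_3$, $B_3$, and $C_3$ having zero row/column dimensions (as permitted by the Remark following~\eqref{SNI:eq:xdot}). Then $G(s)=C_1(sI-A_1)^{-1}B_1+(C_2 B_2)/s$, so $G$ has only a simple pole at the origin, matching the hypotheses $k=0$ and $n_2\neq0$. I would then track how each object in Theorem~\ref{Pre_main_re} simplifies: the matrix $\Xi$ in~\eqref{segma-N} collapses to $C_2^{T}\bar{G}(0)C_2$ since $C_{3a}$ has zero columns; the matrix $N$ in~\eqref{N-g-lemma-1} collapses to $\mathcal{P}(\bar{G}(0),C_2)$, which is exactly~\eqref{N-g-croll-1}; condition~\eqref{Eq:Con1:th2} becomes~\eqref{Eq:Con1:crol6}; and the terms $N^{1/2}C_{3b}P_2^{-1}C_{3b}^{T}N^{1/2}$ and $\tilde{N}C_{3b}P_2^{-1}C_{3b}^{T}\tilde{N}$ appearing in~\eqref{Eq:Con2:th2} and~\eqref{Eq:Con3:th2} vanish, reducing those conditions to~\eqref{Eq:Con2:crol6} and~\eqref{Eq:Con3:crol6}, respectively.

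Second, I would revisit the argument in the proof of Theorem~\ref{Pre_main_re}---the construction of the closed-loop matrix $\breve{A}$ in~\eqref{mat:A:CL}, the associated similarity and Lyapunov manipulations, and the algebra that reduces Hurwitzness of $\breve{A}$ to the stated conditions---and confirm that each step goes through after deleting the rows and columns corresponding to $A_3$. In particular, I would ensure that the auxiliary quantity $P_2=C_{3a}^{T}B_{3b}^{T}(B_{3b}B_{3b}^{T})^{-1}$, which is undefined when $k=0$, never appears in the $k=0$ reduction of the derivation (rather than being formed and then shown to vanish at the end). The cleanest implementation is to present the inherited proof in a form where the $A_3$-block contribution is displayed as an additive term throughout and then set that term to zero.

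The main obstacle is therefore not analytical but notational: one must check that the ``empty matrix'' conventions from the Remark following~\eqref{SNI:eq:xdot} do not produce an ill-defined expression anywhere in the inherited calculation, in particular within the singular value and projector constructions underlying $N$. Once this bookkeeping is handled, the conclusion of Corollary~\ref{min:result:clo1-jour} follows directly from the specialized conditions identified above combined with the internal-stability criterion inherited from Theorem~\ref{Pre_main_re}.
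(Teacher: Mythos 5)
Your proposal is correct and follows essentially the same route as the paper: the paper's own proof likewise observes that conditions \eqref{Eq:Con1:th2}--\eqref{Eq:Con3:th2} reduce to \eqref{Eq:Con1:crol6}--\eqref{Eq:Con3:crol6} when $k=0$ and then re-runs the proof of Theorem~\ref{Pre_main_re} for this special case, with the Lyapunov block $P$ degenerating to $\begin{bmatrix} P_{1} & 0 \\ 0 & 0 \end{bmatrix}$ so that $P_2$ never enters. Your extra care about the empty-matrix bookkeeping is a sensible elaboration of the same argument rather than a different one.
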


The proof of this corollary is given at the end of this appendix.

In order to prove  Theorem \ref{Pre_main_re} and
Corollaries \ref{Pre_main_re-G1-G2-0}-\ref{min:result:clo1-jour}, we
will use  the following lemmas. First, Lemma \ref{geees}  gives  expressions for the quantities
$G_0,G_1$ and $G_2$ in \eqref{f_G0} in terms of the
state space realization \eqref{eq:xdot}-\eqref{J_C_F}.
\begin{lemma}\label{geees}
Suppose that $G(s)$ has a minimal state space realization   \eqref{eq:xdot}-\eqref{J_C_F}. Then
the quantities $G_0,G_1$ and $G_2$ defined in \eqref{f_G0} are given as follows:
\begin{align}
G_2&%=\underset{s\longrightarrow 0}{\lim }s^2G(s)
=C_{3a}B_{3b}, \label{Jour-f_G2}, \\
G_1&%=\underset{s\longrightarrow 0}{\lim }s(G(s)-\frac{G_2}{s^2})
=C_{2}B_{2}+C_{3}B_{3},\label{Jour-f_G1}
 \\
G_0&%=\underset{s\longrightarrow 0}{\lim}(G(s)-\frac{G_2}{s^2}-\frac{G_1}{s})
=-C_1A_{1}^{-1}B_{1}\label{Jour-f_G0}.
\end{align}
\end{lemma}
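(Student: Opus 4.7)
The plan is to obtain the three Laurent coefficients of $G(s)$ at $s=0$ by direct computation from the given block-diagonal state space realization, exploiting the special structure of $A_1, A_2, A_3$.

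First I would write out $G(s)=C(sI-A)^{-1}B$ as the sum of three contributions coming from the three diagonal blocks of $A$. The block $A_1$ is nonsingular, so $C_1(sI-A_1)^{-1}B_1$ is analytic at $s=0$ and its value there is $-C_1A_1^{-1}B_1$. The block $A_2=0$ gives $C_2(sI)^{-1}B_2 = C_2B_2/s$. For the block $A_3$, I use the fact that, because of the nilpotent Jordan form in \eqref{matA3}, $A_3^2=0$ and hence $(sI-A_3)^{-1} = \tfrac{1}{s}I + \tfrac{1}{s^2}A_3$. Carrying out the block multiplication with $B_3=\bigl[B_{3a}^T\; B_{3b}^T\bigr]^T$ and $C_3=[C_{3a}\; C_{3b}]$ gives
\begin{equation*}
C_3(sI-A_3)^{-1}B_3 = \frac{C_{3a}B_{3a}+C_{3b}B_{3b}}{s} + \frac{C_{3a}B_{3b}}{s^2} = \frac{C_3B_3}{s} + \frac{C_{3a}B_{3b}}{s^2}.
\end{equation*}
Adding the three contributions reproduces the Laurent expansion \eqref{parelle-g}.

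Next, I would read off the three required coefficients directly from this expression. Multiplying by $s^2$ and sending $s\to 0$ kills the analytic part and the $1/s$ part and leaves $G_2=C_{3a}B_{3b}$, proving \eqref{Jour-f_G2}. Subtracting $G_2/s^2$ removes the double pole, then multiplying by $s$ and sending $s\to 0$ leaves the constant of the remaining $1/s$ term, namely $G_1=C_2B_2+C_3B_3$, proving \eqref{Jour-f_G1}. Finally, subtracting both $G_2/s^2$ and $G_1/s$ isolates the analytic part $C_1(sI-A_1)^{-1}B_1$, whose value at $s=0$ is $-C_1A_1^{-1}B_1$, proving \eqref{Jour-f_G0}.

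There is no real obstacle here; the only thing that requires a moment of care is the explicit inversion of $sI-A_3$ using the nilpotency $A_3^2=0$, and the bookkeeping of the $C_{3a},C_{3b},B_{3a},B_{3b}$ blocks when identifying which terms contribute to the $1/s$ coefficient versus the $1/s^2$ coefficient. Everything else is just reading off the first three terms of a Laurent series that the realization spells out in closed form.
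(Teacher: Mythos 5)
Your proof is correct and takes essentially the same route as the paper: the paper's proof is a one-line appeal to the Laurent expansion \eqref{parelle-g}, which is exactly the identity you derive explicitly via the nilpotency of $A_3$ before reading off the three coefficients.
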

\begin{proof}
This lemma follows immediately from  \eqref{parelle-g}.
\end{proof}

Now,   Lemmas \ref{full-rank}, \ref{NI-lemma-jordon-f}, \ref{b2-b3b-c2-c3a} give some useful  properties of the minimal
state space realization   \eqref{eq:xdot}-\eqref{J_C_F}.
\begin{lemma}\label{full-rank}
Suppose that the transfer function matrix $G(s)$  has a minimal
state space realization   \eqref{eq:xdot}-\eqref{J_C_F}. Then, the
matrix %$C_2,C_{3a}$,
$\begin{bmatrix}
C_{2} & C_{3a} \end{bmatrix}$ is of full column rank, %$B_{2}$, $B_{3b}$
and the matrix $\begin{bmatrix} B_{2} \\ B_{3b}
\end{bmatrix}$  is of full row
rank. Also  $m\geq k+n_2$ and the subsystem with  realization
$\begin{bmatrix}
\begin{array}{c|c}
A_1 & B_1 \\ \hline C_1 & 0
\end{array}
\end{bmatrix}$ is minimal.
\end{lemma}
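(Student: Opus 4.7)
The plan is to apply the Popov--Belevitch--Hautus (PBH) rank tests to the block-diagonal realization \eqref{eq:xdot}--\eqref{J_C_F}, exploiting the two special structural features of this realization, namely that $A_2=0$ and that $A_3$ has the nilpotent Jordan form \eqref{matA3}. Both of these blocks have $0$ as their only eigenvalue, while $A_1$ is nonsingular; this clean separation of spectra is what drives every claim in the lemma.

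First I would prove the rank conditions on $\begin{bmatrix}C_2 & C_{3a}\end{bmatrix}$ and $\begin{bmatrix}B_2 \\ B_{3b}\end{bmatrix}$ by evaluating PBH at $s=0$. Writing $\begin{bmatrix}-A \\ C\end{bmatrix}$ out in blocks using \eqref{J_C_F} and \eqref{matA3}, the $n_2$ columns indexed by the $A_2$-states and the first $k$ columns of the $A_3$-block are zero in every block-row except the bottom (the $C$-block), where they contribute exactly the columns of $C_2$ and $C_{3a}$. Observability (which is implied by minimality) demands that every column of $\begin{bmatrix}-A \\ C\end{bmatrix}$ be linearly independent, so the submatrix $\begin{bmatrix}C_2 & C_{3a}\end{bmatrix}$ itself must have full column rank. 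The symmetric calculation for $\begin{bmatrix}-A & B\end{bmatrix}$ at $s=0$, using controllability, forces $\begin{bmatrix}B_2 \\ B_{3b}\end{bmatrix}$ to have full row rank. The inequality $m\ge n_2 + k$ is then immediate from either rank conclusion, since $\begin{bmatrix}C_2 & C_{3a}\end{bmatrix}$ has size $m\times(n_2+k)$.

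For the final claim, that $(A_1,B_1,C_1)$ is itself a minimal realization, I would use the spectral-disjointness argument: since $A_1$ is nonsingular while $\sigma(A_2)=\sigma(A_3)=\{0\}$, any $s_0\in\sigma(A_1)$ satisfies $s_0\notin\sigma(A_2)\cup\sigma(A_3)$, so $s_0 I-A_2$ and $s_0 I-A_3$ are invertible. Block-column operations applied to $\begin{bmatrix}s_0 I-A & B\end{bmatrix}$ then annihilate $B_2$ and $B_3$ from the lower block-rows without disturbing the $A_1$-row, reducing the PBH controllability condition at $s_0$ to full row rank of $\begin{bmatrix}s_0 I - A_1 & B_1\end{bmatrix}$. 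The dual argument gives observability of $(A_1,C_1)$. Thus $(A_1,B_1,C_1)$ is minimal. The whole argument is essentially careful block bookkeeping; I do not anticipate any substantive obstacle, but the cleanest presentation requires keeping track of exactly which sub-blocks of $\begin{bmatrix}-A \\ C\end{bmatrix}$ and $\begin{bmatrix}-A & B\end{bmatrix}$ collapse to zero when evaluated at $s=0$.
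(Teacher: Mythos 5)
Your proposal is correct and is essentially the paper's argument in a different standard dress: the paper writes out the Kalman observability and controllability matrices of the block-diagonal realization and reads off the conclusions from their zero pattern, whereas you evaluate the PBH rank tests at $s=0$. Both proofs hinge on the identical structural fact that the columns (rows) of the full-rank PBH/Kalman matrices indexed by the $A_2$-states and the nilpotent half of the $A_3$-states vanish everywhere except where they contribute $\begin{bmatrix}C_2 & C_{3a}\end{bmatrix}$ (respectively $\begin{bmatrix}B_2 \\ B_{3b}\end{bmatrix}$), so the two routes are interchangeable. One small remark on your last step: the spectral-disjointness argument for minimality of $\begin{bmatrix}\begin{array}{c|c}A_1 & B_1\\ \hline C_1 & 0\end{array}\end{bmatrix}$ is fine but slightly more than is needed --- for a block-diagonal $A$, controllability and observability of each diagonal subsystem already follow by restricting a left (right) null vector of the subsystem's PBH matrix, padded with zeros, to the full PBH matrix at the same $s_0$, without invoking $\sigma(A_1)\cap\{0\}=\emptyset$; the paper obtains the same conclusion by noting that the relevant block rows and columns of the Kalman matrices are those of the subsystem.
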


\begin{proof}
 Since  the state space realization   $\begin{bmatrix}
\begin{array}{c|c}
A & B \\ \hline C & 0
\end{array}
\end{bmatrix}$  is  minimal, the pair $(A,C)$ is
observable and the pair $(A,B)$ is controllable. Also, the corresponding
observability matrix is given by
\begin{align*}
O(A,C)=\begin{bmatrix}C\\CA\\CA^2\\ \vdots \\ CA^{n-1}
\end{bmatrix}=\begin{bmatrix}C_1 &C_2& C_{3a} & C_{3b}\\C_1A_1 &0& 0 & C_{3a}\\C_1A_{1}^{2} &0&0 & 0\\ \vdots &\vdots &\vdots &\vdots\\ C_1A_{1}^{n-1} &0&0 & 0
\end{bmatrix}.%\label{Obser1}
\end{align*}
Since the pair $(A,C)$ is observable, it follows that  the observability matrix $O(A,C)$
%in \eqref{Obser1}
is of full rank. This implies that the pair
$(A_1,C_1)$ is observable. Also, since the observability matrix
$O(A,C)$ is of full  rank, it follows that  $C_2$,
$C_{3a}$ and $\begin{bmatrix}
C_{2} & C_{3a}
\end{bmatrix}$ are of full  rank. Furthermore, it follows that  $m\geq k+n_2$. Similarly, since the pair $(A,B)$ is controllable, it follows  that the  corresponding
controllability matrix
\begin{align}
\mathcal{C}(A,B)=&\begin{bmatrix}B&AB&A^2B& \cdots & A^{n-1}B
\end{bmatrix}\notag\\=&\begin{bmatrix}B_1 & A_1B_1& A_{1}^{2}B_1& \cdots & A_{1}^{n-1}B_1\\
B_2&0&0&\cdots &0\\ B_{3a}&B_{3b}&0&\cdots &0 \\ B_{3b}&0&0&\cdots &0
\end{bmatrix},\notag%\label{Contr1}
\end{align} is of full rank. Hence,  the pair $(A_1,B_1)$ is controllable and the
matrices $B_2$, $B_{3b}$ and $\begin{bmatrix}
B_{2} \\ B_{3b}
\end{bmatrix}$ are of full rank. Also, since the
pair $(A_1,C_1)$ is observable and the pair $(A_1,B_1)$ is
controllable, it follows that $\begin{bmatrix}
\begin{array}{c|c}
A_1 & B_1 \\ \hline C_1 & 0
\end{array}
\end{bmatrix}$ is a minimal realization.
\end{proof}

\begin{lemma}\label{NI-lemma-jordon-f}
Suppose that the transfer function matrix $G(s),$  with the minimal
state space  realization \eqref{eq:xdot}-\eqref{J_C_F}, is NI. Then,
there exist symmetric  matrices  $P_1>0,P_{2}>0,$ and matrices
$L_1,$ $W$ such that
\begin{align}
&P_{1}A_{1}+A_{1}^{T}P_{1} =-L_{1}^{T}L_{1}  \label{Sim_LL1}, \\
&P_{1}B_{1}-A_{1}^{T}C_{1}^{T} =-L_{1}^{T}W \label{Sim_LW},\\
&P_{2}B_{3b} =C_{3a}^{^{T}} \label{Sim_LW2},\\
&C_{1}B_{1}+B_{1}^{T}C_{1}^{T}+C_{2}B_{2}+B_{2}^{T}C_{2}^{T}+C_{3}B_{3}+B_{3}^{T}C_{3}^{T}
\notag\\&=W^{T}W \label{Sim_WW1}.
\end{align}
Furthermore,

\begin{align}
P_{2}=C_{3a}^{T}B_{3b}^{T}(B_{3b}B_{3b}^{T})^{-1} , \label{p2-lemma4-n}
\end{align}
and
\begin{align}
G_1+G_1^T&=C_{2}B_{2}+B_{2}^{T}C_{2}^{T}+C_{3}B_{3}+B_{3}^{T}C_{3}^{T}\notag\\&=\left(
W^{T}+C_{1}P_{1}^{-1}L_{1}^{T}\right) \left(
W+L_{1}P_{1}^{-1}C_{1}^{T}\right)\geq0.\label{NI_WW4}
\end{align}
\end{lemma}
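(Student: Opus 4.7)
The plan is to establish the lemma in three stages: (i) obtaining the KYP-type factorization $(P_1, L_1, W)$ for the stable-subsystem coordinates; (ii) constructing $P_2$ from the double-pole structure at the origin; and (iii) deriving \eqref{NI_WW4} as an algebraic consequence. The parallel decomposition of $G(s)$ displayed in \eqref{parelle-g} guides each stage, as do Lemma \ref{full-rank} (which gives minimality of $(A_1,B_1,C_1)$ and full rank of $B_{3b}$ and $\begin{bmatrix} C_{2} & C_{3a}\end{bmatrix}$) and Lemma \ref{geees} (which identifies $G_0,G_1,G_2$ with the state-space data).

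For stage (i), I would apply a KYP-type characterization of the NI property adapted to the non-Hurwitz block structure in \eqref{J_C_F}. Since the standard stable-NI-KYP lemma of \cite{xiong21010jor,lanzon2008} does not cover poles at the origin, I would either (a) use a limiting argument by perturbing $A_2$ and $A_3$ to $A_2-\varepsilon I$ and $A_3-\varepsilon I$, applying the existing lemma to obtain $P^{(\varepsilon)} > 0$, and passing $\varepsilon \to 0^+$ while tracking how the solution splits on the stable and Jordan coordinates; or (b) directly construct the KYP inequality from the Taylor/Laurent expansion of $j(G(j\omega) - G(j\omega)^*) \geq 0$ at different frequencies, matching coefficients to obtain a block-diagonal structure $P = \mathrm{diag}(P_1, 0, P_2)$. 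In either case, a Cholesky-type factorization of the resulting PSD matrix yields $L_1$ and $W$ satisfying \eqref{Sim_LL1}, \eqref{Sim_LW}, and \eqref{Sim_WW1}, with $P_1 > 0$ inherited from the stable part.

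For stage (ii), I would exhibit $P_2$ explicitly via \eqref{p2-lemma4-n}, which is well defined because $B_{3b}$ has full row rank (Lemma \ref{full-rank}). Symmetry of $P_2$ reduces to $B_{3b}(G_2 - G_2^T) B_{3b}^T = 0$, which holds because Condition 4 of Definition \ref{Def:NI} makes $G_2 = C_{3a}B_{3b}$ symmetric. The identity \eqref{Sim_LW2} is equivalent to $C_{3a}^T$ lying in $\mathrm{Col}(B_{3b}^T)$; I would establish this by a minimality argument on the $(A_3, B_3, C_3)$ subsystem, exploiting the Jordan structure of $A_3$ in \eqref{matA3} together with $G_2 = G_2^T \geq 0$ to conclude that any component of $C_{3a}^T$ orthogonal to $\mathrm{Col}(B_{3b}^T)$ would generate an unobservable/uncontrollable mode, contradicting minimality. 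Strict positive definiteness $P_2 > 0$ would then follow from $B_{3b}^T P_2 B_{3b} = G_2$, the full row rank of $B_{3b}$, and a similar minimality argument ruling out null directions of $P_2$.

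For stage (iii), equation \eqref{NI_WW4} is obtained by direct algebraic verification. Expanding $(W^T + C_1 P_1^{-1} L_1^T)(W + L_1 P_1^{-1} C_1^T)$ and substituting the identities $L_1^T L_1 = -P_1 A_1 - A_1^T P_1$, $L_1^T W = A_1^T C_1^T - P_1 B_1$, $W^T L_1 = C_1 A_1 - B_1^T P_1$, and $W^T W = C_1 B_1 + B_1^T C_1^T + C_2 B_2 + B_2^T C_2^T + C_3 B_3 + B_3^T C_3^T$ produces a telescoping cancellation of the cross terms, leaving exactly $C_2 B_2 + B_2^T C_2^T + C_3 B_3 + B_3^T C_3^T = G_1 + G_1^T$ by Lemma \ref{geees}. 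The main obstacle I anticipate lies in stage (i): making the KYP-style characterization rigorous in the presence of poles at the origin. Either the perturbation argument requires careful uniform estimates to ensure $P^{(\varepsilon)}$ does not blow up along the non-stable coordinates and that its block structure converges to $\mathrm{diag}(P_1, 0, P_2)$, or the residue-matching route requires delicate bookkeeping of how the $1/s$ and $1/s^2$ singularities of $G(s)$ interact with the LMI inequality. This is the step where the new features of Definition \ref{Def:NI} interact nontrivially with the classical NI-KYP framework.
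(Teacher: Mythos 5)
Your stage (iii) matches the paper's final step exactly, and your stage (ii) can be made to work (minimality gives $C_{3a}$ full column rank and $B_{3b}$ full row rank, so $G_2=C_{3a}B_{3b}=G_2^T$ of rank $k$ forces $\mathrm{Col}(C_{3a})=\mathrm{Col}(G_2)=\mathrm{Col}(B_{3b}^{T})$, whence $C_{3a}=B_{3b}^{T}M$ with $M=M^{T}>0$ and $P_2=M$). The genuine gap is stage (i), which you yourself flag as the main obstacle and leave as two unexecuted alternatives. Neither alternative is likely to close cleanly. The perturbation route (a) fails at the first step: replacing $A_2,A_3$ by $A_2-\varepsilon I$, $A_3-\varepsilon I$ does not preserve the NI frequency-domain property, so the existing NI--KYP lemma of \cite{lanzon2008,xiong21010jor} cannot be invoked on the perturbed system; and even granting it, the solutions $P^{(\varepsilon)}$ degenerate in exactly the marginally stable directions (the limit object in the paper, \eqref{Eq:P:NI:th2}, has zero blocks there), so the uniform estimates you mention are not a technicality but the whole difficulty. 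The coefficient-matching route (b) is not carried out at all. A further structural problem with confining stage (i) to ``the stable-subsystem coordinates'' is that \eqref{Sim_WW1} requires $W^{T}W$ to equal $C_1B_1+B_1^{T}C_1^{T}+G_1+G_1^{T}$, i.e.\ $W$ must couple the stable block to the integrator blocks; a KYP argument applied only to $(A_1,B_1,C_1)$ has no mechanism to produce the $G_1+G_1^{T}$ terms on the right-hand side.

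The paper resolves all of this with one reduction you did not find: set $R(s)=sG(s)$. From \eqref{parelle-g}, $R(s)$ has the minimal realization $A_r=\mathrm{diag}(A_1,0)$, $B_r=\bigl[\begin{smallmatrix}B_1\\ B_{3b}\end{smallmatrix}\bigr]$, $C_r=[\,C_1A_1\;\; C_{3a}\,]$, $D_r=C_1B_1+C_2B_2+C_3B_3$; the $1/s$ residue of $G$ is absorbed into the feedthrough $D_r$ (which is precisely where the $G_1$ terms in \eqref{Sim_WW1} come from), and the $1/s^2$ singularity becomes a simple pole of $R$ at the origin with PSD residue by Condition 4) of Definition \ref{Def:NI}. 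The NI conditions on $G$ translate verbatim into the positive real conditions on $R$, so the \emph{classical} PR--KYP lemma (which tolerates imaginary-axis poles with PSD residues) applies directly and yields $P_r>0$, $L$, $W$. The block structure $P_r=\mathrm{diag}(P_1,P_2)$ and $L=[\,L_1\;\;0\,]$ is then forced by the algebra of $P_rA_r+A_r^{T}P_r=-L^{T}L$ together with the nonsingularity of $A_1$, and \eqref{Sim_LL1}--\eqref{p2-lemma4-n} all drop out of the three KYP equations simultaneously. To repair your proof, replace stage (i) (and the ad hoc parts of stage (ii)) with this reduction; your stage (iii) can then be kept as is.
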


\begin{proof}
Consider the  transfer function matrix $G(s)$ with the minimal state
space realization  \eqref{eq:xdot}-\eqref{J_C_F}. Also, define the  transfer function matrix $R(s)=sG(s)$. Using
\eqref{parelle-g}, it follows that %It follows from
%\eqref{J_C_F} that
%\begin{align}
%G(s)&=C_{1}(sI-A_{1})^{-1}B_{1}+C_{2}(sI-A_{2})^{-1}B_{2}+C_{3}(sI-A_{3})^{-1}B_{3}\notag\\
%&=C_{1}(sI-A_{1})^{-1}B_{1}+\frac{C_{2}B_{2}+C_{3}B_{3}}{s}+\frac{C_{3a}B_{3b}}{s^2}.\label{parelle-g}
%\end{align}
\begin{align}
R(s)=&sC_{1}(sI-A_{1})^{-1}B_{1}+\frac{C_{3a}B_{3b}}{s}+C_{2}B_{2}+C_{3}B_{3}\notag\\
=&C_{1}A_{1}(sI-A_{1})^{-1}B_{1}+C_{1}B_{1}+\frac{C_{3a}B_{3b}}{s}\notag\\
\qquad &+C_{2}B_{2}+C_{3}B_{3}.\label{R-parelle-g}
\end{align}
This implies that $R(s)$ has a  state space realization
$\begin{bmatrix}
\begin{array}{c|c}
A_r & B_r \\ \hline C_r & D_r
\end{array}
\end{bmatrix}
$
where $A_r=\begin{bmatrix}
A_1 & 0 \\
0 & 0%
\end{bmatrix}$, $ B_r=\begin{bmatrix}
B_1  \\
B_{3b}%
\end{bmatrix}$, $C_r=\begin{bmatrix}
C_1 A_1& C_{3a}\end{bmatrix}$ and
$D_r=C_{1}B_{1}+C_{2}B_{2}+C_{3}B_{3}$. Using the same argument as  in
the proof of Lemma 2, it follows that   the rank of
the matrix formed from the  first and last  columns in    $O(A_r,C_r)$ is equal to the rank
of the matrix formed  from the  first and third  columns in (56), where, $A_1$ is
invertible. This implies that the matrix $O(A_r,C_r)$ is of full
rank; i.e., the  pair $(A_r,C_r)$ is observable. Similarly,  the pair
$(A_r,B_r)$ is controllable. This implies that the state space
realization $\begin{bmatrix}
\begin{array}{c|c}
A_r & B_r \\ \hline C_r & D_r
\end{array}
\end{bmatrix}$ is  minimal.

We now show that $R(s)$ is positive real; e.g., see page 47 in
\cite{brogliato-bk2007} for a definition of positive real transfer
function matrices. Since $G(s)$ is NI, it follows that $j \left(
G(j\omega )-G(j\omega )^{\ast }\right) \geq 0, $ for all $\omega
>0$ such that $j\omega $  is not a pole of $G(s)$. Then
 given any such $\omega > 0$,
$ R(j\omega )+R(j\omega )^{\ast } =j\omega \left( G(j\omega
)-G(j\omega )^{\ast }\right) \geq 0, $ and $\overline{\left(
R(j\omega )+R(j\omega )^{\ast }\right) }\geq 0 $.  This implies
that $ R(-j\omega )+R(-j\omega )^{\ast } \geq 0$ for all  $\omega
>0$, since $ \overline{R(j\omega )}=R(-j\omega  )$. Hence, $ R(j\omega )+R(j\omega )^{\ast } \geq 0$
for all $\omega <0$ such that $j\omega$ is not a pole of $G(s)$.
Therefore , $ R(j\omega )+R(j\omega )^{\ast }\geq 0$ for all $\omega
\in (-\infty ,\infty )$ such that $j\omega $ is not a pole of
$G(s)$.

Now, consider the case where $j\omega_{0}$ is a pole of $G(s)$ and $\omega _{0}=0.$ In the case where $C_{3a}B_{3b}=0$, the transfer function matrix $R(s)=C_{1}A_{1}(sI-A_{1})^{-1}B_{1}+C_{1}B_{1}+C_{2}B_{2}+C_{3}B_{3}$ will have no pole at the origin. This implies that $R(0)$ is finite.
Since $ R(j\omega )+R(j\omega )^{\ast } \geq 0$ for all $%
\omega >0$ such that $j \omega$ is not a pole of $G(s) $ and
$R(j\omega )$ is continuous at $\omega=0$, this implies that $R(0)+R(0)^{\ast
}\geq 0$. In the case where $C_{3a}B_{3b}\neq0$, the transfer function matrix $R(s)$ is as given  in \eqref{R-parelle-g}. Since $G(s)$ is NI, then
$\underset{s\longrightarrow 0}{\lim }s^2G(s)\geq0$ which implies
that $\underset{s\longrightarrow 0}{\lim }sR(s)\geq0$.

 If $j\omega _{0}$
is a pole of $G(s)$ and $\omega _{0}>0$, then $G(s)$ can be factored as $\frac{1}{s^{2}+\omega _{0}^{2}%
}F(s)$, which according to the definition for NI systems implies
that the residue matrix $ K_{0}=\frac{1}{2\omega _{0}}F(j\omega
_{0})$ is positive semidefinite Hermitian. Hence, $F(j\omega
_{0})=F(j\omega _{0})^{\ast }\geq 0$. Now, the residue matrix of
$R(s)$ at $j\omega _{0}$  with \ $\omega _{0}>0$ is given by,
\begin{eqnarray*}
\underset{s\longrightarrow j\omega _{0}}{\lim }(s-j\omega _{0})R(s) &=&%
\underset{s\longrightarrow j\omega _{0}}{\lim }(s-j\omega _{0})s
G(s), \\
&=&\underset{s\longrightarrow j\omega _{0}}{\lim }(s-j\omega _{0})s\frac{1}{%
s^{2}+\omega _{0}^{2}}F(s),\\
 &=&\frac{1}{2}F(j\omega _{0})
\end{eqnarray*}%
which is positive semidefinite Hermitian. %Moreover  $G(s)$ and
%$F(s)$ has no infinite pole since $G(s)$ and $F(s)$ is strictly
%proper.
Hence, we can conclude that  $R(s)$ is positive real; see page 47 in  \cite{brogliato-bk2007}.
Using the KYP lemma (e.g., see Lemma 3.1 in \cite{brogliato-bk2007}), it now follows
 that there exist  matrices  $P_r>0, L$ and $W$ such that
\begin{align}
P_rA_r+A_r^{T}P_r =&-L^{T}L,  %\label{PR-NI_LMI1}
\nonumber \\
P_rB_r-C_r^{T} =&-L^{T}W, %\label{PR-NI_LMI2}
\nonumber \\
D_r+D_r^{T} =&W^{T}W. \label{PR-NI_LMI3}
\end{align}
If we write $P_r=\begin{bmatrix}
P_{1} & P_{12} \\
P_{12}^T & P_{2}%
\end{bmatrix}%
$ and $L=\begin{bmatrix}
L_{1} & L_{2}%
\end{bmatrix}$, it follows from \eqref{PR-NI_LMI3} that
\begin{align}
&\begin{bmatrix}
P_{1} & P_{12} \\
P_{12}^{T} & P_{2}%
\end{bmatrix}%
\begin{bmatrix}
A_{1} & 0 \\
0 & 0%
\end{bmatrix}%
+%
\begin{bmatrix}
A_{1}^{T} & 0 \\
0 & 0%
\end{bmatrix}%
\begin{bmatrix}
P_{1} & P_{12} \\
P_{12}^{T} & P_{2}%
\end{bmatrix}
\notag\\&=-%
\begin{bmatrix}
L_{1}^{T} \\
L_{2}^{T}%
\end{bmatrix}%
\begin{bmatrix}
L_{1} & L_{2}%
\end{bmatrix},
\notag\\
\Leftrightarrow&\begin{bmatrix}
P_{1}A_{1}+A_{1}^{T}P_{1} & A_{1}^{T}P_{12}\\
P_{12}^{T}A_{1} &0%
\end{bmatrix}%
=-%
\begin{bmatrix}
L_{1}^{T}L_{1} & L_{1}^{T}L_{2} \\
L_{2}^{T}L_{1} & L_{2}^{T}L_{2}%
\end{bmatrix}.
\label{PR-LL} %&\Leftrightarrow\begin{bmatrix}
%P_{1}A_{1}+A_{1}^{T}P_{1} & A_{1}^{T}P_{12}\\
%P_{12}^{T}A_{1} &0%
%\end{bmatrix}%
%=-%
%\begin{bmatrix}
%L_{1}^{T}L_{1} & 0 \\
%0 & 0%
%\end{bmatrix}.
%\label{PR-LL}
\end{align}
Hence $L_2=0$ and since $A_1$ is a nonsingular matrix, it also
follows   that   $P_{12}=0$. Also, \eqref{PR-LL} implies that
\eqref{Sim_LL1} is satisfied.
%\begin{align*}
%P_{1}A_{1}+A_{1}^{T}P_{1}=L_{1}^{T}L_{1}.\label{L1L1-PR}
%\end{align*}
From \eqref{PR-NI_LMI3}, it follows that
\begin{align*}
\begin{bmatrix}
P_{1} & 0 \\
0 & P_{2}%
\end{bmatrix}%
\begin{bmatrix}
B_{1} \\
B_{3b}%
\end{bmatrix}%
-%
\begin{bmatrix}
A_{1}^T C_{1}^{T} \\
C_{3a}^{T}%
\end{bmatrix}
&=-%
\begin{bmatrix}
L_{1}^{T} \\
0%
\end{bmatrix}W,
\end{align*}
which  implies \eqref{Sim_LW} and  \eqref{Sim_LW2}.  Lemma \ref{full-rank} implies that $B_{3b}$ is of full rank and hence,  \eqref{Sim_LW2} implies that \eqref{p2-lemma4-n} is also satisfied.
%\begin{align}
%P_{2}B_{3b}=C_{3a}^{T},\notag%\label{L1W2-PR}
%\end{align}
%and
%\begin{align}
% P_{1}B_{1}-C_{1}^{T}A_{1}^T =L_{1}^{T}W.\notag%\label{L1W1-PR}
%\end{align}
From \eqref{PR-NI_LMI3}, it follows that  \eqref{Sim_WW1} holds.
%\begin{align}
%C_{1}B_{1}+B_{1}^{T}C_{1}^{T}+C_{2}B_{2}+B_{2}^{T}C_{2}^{T}+C_{3}B_{3}+B_{3}^{T}C_{3}^{T}
%=W^{T}W. \notag%\label{WW1-PR}.
%\end{align}
% Hence,   \eqref{PR-NI_LMI1}, \eqref{PR-NI_LMI2} and
% \eqref{PR-NI_LMI3} imply \eqref{Sim_LL1}-\eqref{Sim_WW1}.
 Also, using \eqref{Sim_LW}, we can write $B_1$ as,
\begin{equation*}%\label{NI_B1}
B_{1} =P_{1}^{-1}(A_{1}^{T}C_{1}^{T}-L_{1}^{T}W).
\end{equation*}
 Substituting   this and \eqref{Sim_LL1} %and \eqref{NI_B1}
into \eqref{Sim_WW1}, it
 follows that
\begin{align}
&C_{2}B_{2}+B_{2}^{T}C_{2}^{T}+C_{3}B_{3}+B_{3}^{T}C_{3}^{T}\notag\\&=\left(
W^{T}+C_{1}P_{1}^{-1}L_{1}^{T}\right) \left(
W+L_{1}P_{1}^{-1}C_{1}^{T}\right)\geq0.\notag%\label{NI_WW4n}
\end{align} Using  \eqref{Jour-f_G0} in Lemma \ref{geees}, this  implies  \eqref{NI_WW4}. This completes the proof.
\end{proof}

\begin{lemma}\label{b2-b3b-c2-c3a}
Suppose that the transfer function matrix $G(s)$  with the minimal
state space  realization \eqref{eq:xdot}-\eqref{J_C_F} is NI. Then,
there exists an invertible matrix $R_d$ such that $
\begin{bmatrix}
B_{2} \\
B_{3b}%
\end{bmatrix}%
=R_d%
\begin{bmatrix}
C_{2}^{T} \\
C_{3a}^{T}%
\end{bmatrix}.%
$ Also, if $x \in \mathcal{N}\left(
\begin{bmatrix}
C_{2}^{T} \\
C_{3a}^{T}%
\end{bmatrix}\right)$, then $x \in \mathcal{N}(B_{3a}+P_2^{-1}
C_{3b}^T)$, where  the matrix $P_2$ is defined as in Lemma
\ref{NI-lemma-jordon-f}. Here, $\mathcal{N}(\cdot)$ denotes  as the
null space of a matrix.

\end{lemma}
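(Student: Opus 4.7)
My plan is to derive both claims from the KYP-like identities \eqref{Sim_LL1}--\eqref{NI_WW4} of Lemma \ref{NI-lemma-jordon-f} together with the full-rank conclusions of Lemma \ref{full-rank}. Throughout the argument I would fix an $x \in \ker C_2^T \cap \ker C_{3a}^T$ and squeeze information out of the factorization \eqref{NI_WW4} by sandwiching it twice: once quadratically to obtain a key orthogonality, and then bilinearly to extract the desired kernel relations.

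First I would sandwich \eqref{NI_WW4} quadratically as $x^T(G_1+G_1^T)x=\|(W+L_1 P_1^{-1}C_1^T)x\|^2$. Using $C_2^T x=0$, $C_{3a}^T x=0$ and the substitution $B_{3b}=P_2^{-1}C_{3a}^T$ from \eqref{Sim_LW2}, which simultaneously turns $G_2$ into $C_{3a}P_2^{-1}C_{3a}^T$ and $C_{3b}B_{3b}$ into $C_{3b}P_2^{-1}C_{3a}^T$, every term on the left-hand side vanishes. This forces the intermediate identity
\begin{align*}
(W+L_1 P_1^{-1}C_1^T)\,x=0,
\end{align*}
which is the engine driving the rest of the proof.

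Next I would repeat the same cancellation bilinearly, sandwiching \eqref{NI_WW4} as $x^T(\cdot)z$ for arbitrary $z\in\mathbb{R}^m$. The right-hand side is annihilated by the identity just obtained, while the left-hand side collapses to
\begin{align*}
\Bigl(C_2 B_2 x + C_{3a}\bigl(B_{3a}x + P_2^{-1}C_{3b}^T x\bigr)\Bigr)^T z = 0
\end{align*}
for every $z$, so the vector in parentheses must vanish. Because $\begin{bmatrix} C_2 & C_{3a}\end{bmatrix}$ has full column rank by Lemma \ref{full-rank}, its two blocks must vanish individually, yielding $B_2 x=0$ and $(B_{3a}+P_2^{-1}C_{3b}^T)x=0$. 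The latter is precisely the lemma's second assertion.

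To obtain the invertible $R_d$ of the first assertion, I would close with a dimension count. The previous step shows $\ker\begin{bmatrix}C_2^T\\ C_{3a}^T\end{bmatrix}\subseteq\ker\begin{bmatrix}B_2\\ B_{3b}\end{bmatrix}$ (the block $B_{3b}x=P_2^{-1}C_{3a}^T x$ vanishes for free). Both $(n_2+k)\times m$ matrices have full row rank by Lemma \ref{full-rank}, so each kernel has dimension $m-(n_2+k)$; containment with equal dimension forces equality, hence the row spans coincide in $\mathbb{R}^m$. Two full-row-rank matrices of the same size with a common row span differ by an invertible left factor, which supplies $R_d$. The main technical obstacle is the quadratic sandwich in the first step: all three $C_{3a}$-involving contributions to $G_1+G_1^T$ must first be re-expressed via \eqref{Sim_LW2} before $C_{3a}^T x=0$ can kill them; once that bookkeeping is done, the bilinear extension and the dimension-count closing follow essentially mechanically.
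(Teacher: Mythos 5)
Your proof is correct, and its engine is the same as the paper's: both arguments exploit the positive semidefinite factorization $G_1+G_1^T=M^TM$ with $M=W+L_1P_1^{-1}C_1^T$ from Lemma \ref{NI-lemma-jordon-f}, use $P_2B_{3b}=C_{3a}^T$ to annihilate the surviving terms for $x$ in the given null space, and then invoke the full column rank of $\begin{bmatrix}C_2 & C_{3a}\end{bmatrix}$ from Lemma \ref{full-rank} to separate $B_2x=0$ from $(B_{3a}+P_2^{-1}C_{3b}^T)x=0$; your quadratic-then-bilinear sandwich of $M^TM$ is simply the explicit form of the paper's step ``$x^TAx=0$ and $A\geq0$ imply $Ax=0$.'' Where you genuinely diverge is the construction of $R_d$: the paper proves the reverse inclusion $\mathcal{N}\left(\begin{bmatrix}B_2\\ B_{3b}\end{bmatrix}\right)\subseteq\mathcal{N}\left(\begin{bmatrix}C_2^T\\ C_{3a}^T\end{bmatrix}\right)$ by running a second, mirror-image computation starting from the other null space, whereas you upgrade the single forward inclusion to an equality by a dimension count, using the fact that both matrices are $(n_2+k)\times m$ of full row rank (again Lemma \ref{full-rank}). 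Your rank argument eliminates roughly half of the paper's proof at no cost, since the needed rank facts are already supplied; both routes then close identically, as equal null spaces of two full-row-rank matrices of the same size give equal row spaces and hence an invertible left factor $R_d$.
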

\begin{proof}
Suppose that $x \in \mathcal{N}\left(
\begin{bmatrix}
C_{2}^{T} \\
C_{3a}^{T}%
\end{bmatrix}\right)$. It follows that
$
\begin{bmatrix}
C_{2}^{T} \\
C_{3a}^{T}%
\end{bmatrix}%
x=0$. Hence  using \eqref{Sim_LW2} in Lemma \ref{NI-lemma-jordon-f}, it follows that there exists a matrix $P_2>0$ such that
%$ x^{T}C_{2}=0, x^{T}C_{3a}=0\text{ and}
$P_{2}B_{3b}x=0$. Therefore, $x^{T}C_{2}=0\text{,
}x^{T}C_{3a}=0\text{ and }B_{3b}x=0.$ Hence, using \eqref{Jour-f_G0}
it follows that
\begin{align*}
& x^{T}(G_1+G_1^{T})x=x^{T}G_1x+x^{T}G_1^{T}x. \end{align*} Using
the fact that  $x^{T}G_1x$ is a scalar, this implies
\begin{align*}
& x^{T}(G_1+G_1^{T})x=2x^{T}G_1x\\
\Rightarrow &2x^{T}(C_{2}B_{2}+C_{3a}B_{3a}+C_{3b}B_{3b})x=0,\\
\Rightarrow &(G_1+G_1^{T})x=0,
\end{align*}
since, $G_1+G_1^{T}\geq 0$  using \eqref{NI_WW4} in Lemma
\ref{NI-lemma-jordon-f}. Hence,
%\Rightarrow&
%(C_{2}B_{2}+C_{3a}B_{3a}+C_{3b}B_{3b}+B_{2}^{T}C_{2}^{T}+B_{3a}^{T}C_{3a}^{T}+B_{3b}^{T}C_{3b}^{T})x=0,
\begin{align*}
&(C_{2}B_{2}+C_{3a}B_{3a}+B_{3b}^{T}C_{3b}^{T})x=0, \\
\Rightarrow
&(C_{2}B_{2}+C_{3a}B_{3a}+C_{3a}P_{2}^{-1}C_{3b}^{T})x=0,
\end{align*}
 using \eqref{Sim_LW2} in Lemma \ref{NI-lemma-jordon-f}. Therefore,
 \begin{align*}
 &
\begin{bmatrix}
C_{2} & C_{3a}%
\end{bmatrix}%
\begin{bmatrix}
B_{2} \\
B_{3a}+P_{2}^{-1}C_{3b}^{T}%
\end{bmatrix}%
x=0, \\
\Rightarrow&
\begin{bmatrix}
B_{2} \\
B_{3a}+P_{2}^{-1}C_{3b}^{T}%
\end{bmatrix}%
x=0,\end{align*}  since $
\begin{bmatrix}
C_{2} & C_{3a}%
\end{bmatrix}$
 is full rank using Lemma \ref{full-rank}. Therefore, \begin{align*}
 B_{2}x=0 \text{ and } (B_{3a}+P_{2}^{-1}C_{3b}^{T})x=0.
\end{align*}
  This implies that  $x \in \mathcal{N}(B_{3a}+P_2^{-1} C_{3b}^T)$. Thus, we have established the second part  of the lemma. Also since $B_{2}x=0 $ and $B_{3b}x=0, $   it follows that
$\begin{bmatrix}
B_{2} \\
B_{3b}%
\end{bmatrix}%
x=0.
$
This implies that if  $x \in \mathcal{N}\left(
\begin{bmatrix}
C_{2}^{T} \\
C_{3a}^{T}%
\end{bmatrix}\right)$, then
$x \in \mathcal{N}\left(
\begin{bmatrix}
B_{2} \\
B_{3b}%
\end{bmatrix}%
\right).$

Similarly, suppose that  $ x \in \mathcal{N}\left(
\begin{bmatrix}
B_{2} \\
B_{3b}%
\end{bmatrix}\right)$, and hence  $
\begin{bmatrix}
B_{2} \\
B_{3b}%
\end{bmatrix}%
x=0$. Therefore, $B_{2}x=0, B_{3b}x=0\text{ and
}C_{3a}^Tx=0.$
Hence using \eqref{Jour-f_G1} in Lemma \ref{geees}, it follows that
\begin{align*}
& x^{T}(G_1+G_1^{T})x =2x^{T}G_1x, \\ \Rightarrow
&2x^{T}(C_{2}B_{2}+C_{3a}B_{3a}+C_{3b}B_{3b})x=0,\\
\Rightarrow &x^{T}(G_1+G_1^{T})=0, \end{align*} since,
$G_1+G_1^{T}\geq 0$ using \eqref{NI_WW4} in Lemma
\ref{NI-lemma-jordon-f}. Therefore,
\begin{align*}
&x^{T}(C_{2}B_{2}+C_{3b}B_{3b}+B_{3a}^{T}C_{3a}^{T})=0, \\
\Rightarrow
&x^{T}(C_{2}B_{2}+C_{3b}B_{3b}+B_{3a}^{T}P_2B_{3b})=0,\end{align*}
using \eqref{Sim_LW2} in Lemma \ref{NI-lemma-jordon-f}. Therefore,
\begin{align*}  & x^{T}\begin{bmatrix}
C_{2} & C_{3b}+B_{3a}^{T}P_2%
\end{bmatrix}%
\begin{bmatrix}
B_{2} \\
B_{3b}%
\end{bmatrix}%
=0, \\
\Rightarrow&
x^{T}\begin{bmatrix}
C_{2} & C_{3b}+B_{3a}^{T}P_2%
\end{bmatrix}%
=0,\end{align*} since $
\begin{bmatrix}
B_{2} \\
B_{3b}%
\end{bmatrix}%
$ is full rank using Lemma \ref{full-rank}. Therefore,
\begin{align*}
& x^{T}C_{2}=0, \\
\Rightarrow& %
x^{T}\begin{bmatrix}
C_{2} & C_{3a}%
\end{bmatrix}%
=0.
\end{align*}%
 %Similarly we can show that if $ x \in \mathcal{N}\left(
%\begin{bmatrix}
%B_{2} \\
%B_{3b}%
%\end{bmatrix}\right)$,
Therefore, $ x \in\mathcal{N}\left(
\begin{bmatrix}
C_{2}^{T} \\
C_{3a}^{T}%
\end{bmatrix}\right).%
$ Hence,
\begin{align*} \mathcal{N}\left(
\begin{bmatrix}
B_{2} \\
B_{3b}%
\end{bmatrix}\right)%
=\mathcal{N}\left(
\begin{bmatrix}
C_{2}^{T} \\
C_{3a}^{T}%
\end{bmatrix}\right).%
\end{align*}
 Hence, there exists an invertible matrix $R_d$ such that
\begin{align*}
\begin{bmatrix}
B_{2} \\
B_{3b}%
\end{bmatrix}%
=R_d%
\begin{bmatrix}
C_{2}^{T} \\
C_{3a}^{T}%
\end{bmatrix}.%
\end{align*} This completes the proof.
\end{proof}

The following  lemma, which follows directly  from the proof of a result presented in  \cite{lanzon2008,xiong21010jor}, gives  useful properties of the minimal realization \eqref{SNI:eq:xdot} of the SNI transfer function matrix $\bar{G}(s)$.
%will be used in proving  Theorem \ref{Pre_main_re}.
%stated as following
\begin{lemma}(See the proof of Lemma 6 in\cite{xiong21010jor}\label{SNI-lemma})
 Suppose that the transfer function matrix $\bar{G}(s),$ with
 minimal state space  realization
\eqref{SNI:eq:xdot}, is SNI.
% Let $
%\begin{bmatrix}
%\begin{array}{c|c}
%\bar{A} & \bar{B }\\ \hline \bar{C} & \bar{D}
%\end{array}
%\end{bmatrix}$ be a minimal realization of the transfer function matrix  $%
%\bar{G}(s)$ for the system in (\ref{eq:xdot})-(\ref{eq:y}).
Then,   $\bar{D}=\bar{D}^T$, $\det(\bar{A})\neq0$ and there exists
a matrix  $\bar{P}=\bar{P}^{T}> 0$ such that %$\bar{L},$ and $\bar{W}$
% such that the following LMI is
%satisfied:
%\begin{align}\label{LMI:PR}
%\begin{bmatrix}
%\bar{P}\bar{A}+\bar{A}^{T}\bar{P} & \bar{P}\bar{B}-\bar{A}^{T}\bar{C}^{T} \\
%\bar{B}^{T}\bar{P}-\bar{C}\bar{A} & -(\bar{C}\bar{B}+\bar{B}^{T}\bar{C}^{T})%
%\end{bmatrix}%
%=%
%\begin{bmatrix}
%-\bar{L}^{T}\bar{L} & -\bar{L}^{T}\bar{W} \\
%-\bar{W}^{T}\bar{L} & -\bar{W}^{T}\bar{W}%
%\end{bmatrix}
%\leq 0.
%\end{align}
%Furthermore, the LMI \eqref{LMI:PR} is equivalent to the following
\begin{equation}\label{LMI:I-La}
\bar{A}\bar{P}^{-1}+\bar{P}^{-1}\bar{A}^{T}\leq0 \ \text{and}\ \bar{B}=-\bar{A}\bar{P}^{-1}\bar{C}^{T}.
\end{equation}
\end{lemma}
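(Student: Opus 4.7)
The proof decomposes into three parts.

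First, I would establish $\bar{D}=\bar{D}^T$ by passing to the limit $\omega\to\infty$ in the SNI inequality $j(\bar{G}(j\omega)-\bar{G}(j\omega)^{\ast})>0$. Since $\bar{G}(j\omega)\to\bar{D}$ as $\omega\to\infty$, the limit gives $j(\bar{D}-\bar{D}^T)\geq0$ (Hermitian PSD). Because $\bar{D}$ is real, $\bar{D}-\bar{D}^T$ is real skew-symmetric; its nonzero eigenvalues come in conjugate pairs $\pm i\mu$, so the eigenvalues of $j(\bar{D}-\bar{D}^T)$ are $\mp\mu\in\mathbb{R}$. Requiring these to be nonnegative forces every $\mu=0$, hence $\bar{D}=\bar{D}^T$. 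The claim $\det(\bar{A})\neq0$ then follows immediately: by minimality, the poles of $\bar{G}(s)$ coincide with the eigenvalues of $\bar{A}$, and the SNI condition forbids a pole at the origin.

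Second, to produce $\bar{P}$, I would reduce the SNI problem for $\bar{G}$ to a positive-real (PR) problem for an auxiliary system, then invoke the KYP lemma. The natural candidate is $F(s)=s(\bar{G}(s)-\bar{D})$. Using the identity $s(sI-\bar{A})^{-1}=I+\bar{A}(sI-\bar{A})^{-1}$, $F$ admits the realization $F(s)=\bar{C}\bar{B}+\bar{C}\bar{A}(sI-\bar{A})^{-1}\bar{B}$; since $\bar{A}$ is invertible and $(\bar{A},\bar{B},\bar{C})$ is minimal, this realization is also minimal. A direct computation using $\bar{D}=\bar{D}^T$ gives
\begin{equation*}
F(j\omega)+F(j\omega)^{\ast}=\omega\cdot j\bigl(\bar{G}(j\omega)-\bar{G}(j\omega)^{\ast}\bigr),
\end{equation*}
which is positive definite for $\omega>0$ by SNI, and positive definite for $\omega<0$ by the symmetry $\bar{G}(-j\omega)=\overline{\bar{G}(j\omega)}$ (which flips the sign of $j(\bar{G}-\bar{G}^{\ast})$ and is compensated by the flipped sign of $\omega$). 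Hence $F(s)$ is PR, and the KYP lemma supplies $P=P^T>0$, $L$, $W$ satisfying
\begin{equation*}
P\bar{A}+\bar{A}^TP=-L^TL,\quad P\bar{B}-\bar{A}^T\bar{C}^T=-L^TW,\quad \bar{C}\bar{B}+\bar{B}^T\bar{C}^T=W^TW.
\end{equation*}
Taking $\bar{P}=P$ and conjugating the first identity by $\bar{P}^{-1}$ yields $\bar{A}\bar{P}^{-1}+\bar{P}^{-1}\bar{A}^T\leq 0$, which is one of the two required conditions.

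The main obstacle is the remaining identity $\bar{B}=-\bar{A}\bar{P}^{-1}\bar{C}^T$, since this is not a direct consequence of the three generic KYP relations above (which involve the unknown $L,W$). To close this gap I would single out a distinguished solution $\bar{P}$: pick $\bar{P}^{-1}$ to be the maximal positive-definite solution of the dual Lyapunov inequality $\bar{A}Y+Y\bar{A}^T\leq 0$ subject to $\bar{C}Y\bar{C}^T=\bar{G}(0)-\bar{D}$ (this last equality follows from $\bar{B}=-\bar{A}Y\bar{C}^T$ and $\bar{G}(0)-\bar{D}=-\bar{C}\bar{A}^{-1}\bar{B}$, so it is a consistency check for the claim). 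Using strictness of SNI on $(0,\infty)$ together with the associated algebraic Riccati structure, I would show that any such extremal $\bar{P}$ must satisfy $\bar{P}\bar{B}+\bar{P}\bar{A}\bar{P}^{-1}\bar{C}^T=0$, i.e., $\bar{B}=-\bar{A}\bar{P}^{-1}\bar{C}^T$. This step parallels the argument given for Lemma~6 of \cite{xiong21010jor}, which the present lemma explicitly invokes.
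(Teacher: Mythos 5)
Your first two steps are sound and follow the same route as the proof this lemma cites (Lemma~6/7 of the Lanzon--Petersen--Xiong line of results): the limit argument at $\omega\to\infty$ correctly forces the real skew-symmetric part of $\bar{D}$ to vanish, $\det(\bar{A})\neq0$ follows from minimality and the absence of a pole at the origin, and the passage to the positive real auxiliary function $F(s)=s(\bar{G}(s)-\bar{D})$ with realization $(\bar{A},\bar{B},\bar{C}\bar{A},\bar{C}\bar{B})$, followed by the KYP lemma, is exactly the standard device. The Lyapunov inequality $\bar{A}\bar{P}^{-1}+\bar{P}^{-1}\bar{A}^{T}\leq0$ then follows as you say.

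However, the identity $\bar{B}=-\bar{A}\bar{P}^{-1}\bar{C}^{T}$ --- which is the substantive content of the lemma and the only part actually used downstream (e.g.\ to rewrite $\bar{D}+\bar{C}\bar{P}^{-1}\bar{C}^{T}$ as $\bar{G}(0)$ in Claim~1 of Theorem~2) --- is left unproven. Your proposed fix via a ``maximal'' solution of a constrained dual Lyapunov inequality is not an argument: the constraint $\bar{C}Y\bar{C}^{T}=\bar{G}(0)-\bar{D}$ is itself derived from the conclusion you are trying to establish, and the claim that ``any such extremal $\bar{P}$ must satisfy'' the identity is asserted, not shown. The gap closes with an elementary step you have omitted. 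The KYP relations yield the spectral factorization $F(s)+F(-s)^{T}=V(-s)^{T}V(s)$ with $V(s)=W+L(sI-\bar{A})^{-1}\bar{B}$. Since $\bar{A}$ is invertible, $\bar{G}(0)$ is finite and $F(0)=0$, so evaluating at $s=0$ gives $(W-L\bar{A}^{-1}\bar{B})^{T}(W-L\bar{A}^{-1}\bar{B})=0$, hence $W=L\bar{A}^{-1}\bar{B}$. Substituting this into $P\bar{B}-\bar{A}^{T}\bar{C}^{T}=-L^{T}W$ and using $L^{T}L=-(P\bar{A}+\bar{A}^{T}P)$ gives $P\bar{B}-\bar{A}^{T}\bar{C}^{T}=(P\bar{A}+\bar{A}^{T}P)\bar{A}^{-1}\bar{B}=P\bar{B}+\bar{A}^{T}P\bar{A}^{-1}\bar{B}$, whence $\bar{C}^{T}=-P\bar{A}^{-1}\bar{B}$ and $\bar{B}=-\bar{A}P^{-1}\bar{C}^{T}$ with $\bar{P}=P$. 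Without this (or an equivalent) step your proof is incomplete at its most important point.
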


The following lemma is a simple matrix theory result.
\begin{lemma}\label{math4} (See e.g.,  \cite{lanzon2008})
Given $A \in \mathbb{C}^{n\times n}$  with $j(A-A^{*})\geq0$ and $B \in \mathbb{C}^{n\times n}$  with $j(B-B^{*})>0$, then
\begin{align*}
\det(I-AB)\neq0.
\end{align*}
\end{lemma}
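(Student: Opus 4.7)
The plan is to argue by contradiction. Suppose $\det(I-AB)=0$, so there exists a nonzero vector $x\in\mathbb{C}^n$ with $(I-AB)x=0$, i.e.\ $x=ABx$. Setting $y=Bx$ gives $x=Ay$, so I have two paired equations $x=Ay$ and $y=Bx$ which I intend to play against each other by looking at imaginary parts of quadratic forms.

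The key identity I will use is the standard fact that for any matrix $M$ and vector $v$, $v^*(M-M^*)v = 2j\,\operatorname{Im}(v^*Mv)$, equivalently $v^*\,j(M-M^*)\,v = -2\operatorname{Im}(v^*Mv)$. Applying this with $M=A$, $v=y$ and using the hypothesis $j(A-A^*)\ge 0$ yields $\operatorname{Im}(y^*Ay)\le 0$. Applying it with $M=B$, $v=x$ and using the strict hypothesis $j(B-B^*)>0$ together with $x\ne 0$ yields the strict inequality $\operatorname{Im}(x^*Bx)<0$.

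The contradiction then comes from identifying these two quadratic forms with the real and imaginary parts of a single inner product. Since $y=Bx$ and $x=Ay$, I compute
\[
x^*Bx = x^*y \qquad\text{and}\qquad y^*Ay = y^*x = \overline{x^*y},
\]
so $\operatorname{Im}(x^*Bx)=\operatorname{Im}(x^*y)=-\operatorname{Im}(y^*Ay)\ge 0$, contradicting $\operatorname{Im}(x^*Bx)<0$. Hence $I-AB$ must be nonsingular.

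There is no real obstacle here — the argument is essentially a two-line sign chase. The only point that needs minor care is ensuring that the strict inequality is applied on the $B$ side (where the strict hypothesis lives) and at the nonzero vector $x$ (which is guaranteed by our choice of $x$ from the null space of $I-AB$), while the non-strict inequality is applied on the $A$ side at $y=Bx$, for which no nonvanishing assumption is needed.
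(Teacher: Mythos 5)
Your proof is correct, and it is essentially the standard argument for this fact (the paper itself does not reprove the lemma but cites it from the reference \cite{lanzon2008}, where the proof proceeds by the same contradiction and sign-chase on the quadratic forms $x^*Bx$ and $y^*Ay$). The placement of the strict inequality on the $B$ side at the nonzero vector $x$ is handled properly, so there is nothing to fix.
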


%We also consider an SNI  transfer function matrix $\bar{G}(s)$ with a
%minimal state space realization
%\begin{align}
%\label{SNI:eq:xdot}
%&\dot{x}(t) = \bar{A} x(t)+\bar{B} u(t), \\
%\label{SNI:eq:y} &y(t) = \bar{C} x(t)+\bar{D} u(t),
%\end{align}%
%where $\bar{A} \in \mathbb{R}^{\bar{n} \times \bar{n}},\bar{B} \in
%\mathbb{R}^{\bar{n} \times m},\bar{C} \in \mathbb{R}^{m \times
%\bar{n}},$ and $\bar{D} \in \mathbb{R}^{m \times m}.$

%================================================
%===== Proof of Lemma \eqref{Pre_main_re}========
%================================================

Now, we are in a position to present the  proof of Theorem \ref{Pre_main_re}.

\emph{Proof of Theorem \ref{Pre_main_re}:}
 The internal stability  of the   positive-feedback
interconnection between $G(s)$ and $\bar{G}(s)$ will be guaranteed  by
considering  the  closed loop %transfer function matrix,
%\begin{equation*}
%\Sigma(s)=G(s)(I-\bar{G}(s)G(s))^{-1}
%\end{equation*}%
%with a corresponding
system  matrix defined in \eqref{mat:A:CL} which is given by
\begin{align*}%\label{mat:A}
\breve{A} &=\begin{bmatrix}
A+B \bar{D}C & B \bar{C} \\
\bar{B}C & \bar{A}%
\end{bmatrix}.%
\end{align*}
Here, $A,B$ and $C$ are  defined as in  \eqref{eq:xdot}-\eqref{J_C_F} and
$\bar{A},\bar{B},\bar{C} , \bar{D}$ are  defined as  in
(\ref{SNI:eq:xdot}). To establish internal stability, we  show that the
matrix $\breve{A} $
%in \eqref{mat:A}
is Hurwitz; i.e., all the
eigenvalues  of $\breve{A} $ lie in the open left-half of the
complex plane.

Consider   $T=%
\begin{bmatrix}
P-C^{T} \bar{D}C & -C^{T} \bar{C} \\
-\bar{C}^{T}C & \bar{P}%
\end{bmatrix}
$ to  be a candidate Lyapunov matrix, where
\begin{equation}\label{Eq:P:NI:th2}
P=\begin{bmatrix}
P_{1} & 0 & 0&0 \\
0 & 0 & 0&0 \\
0 & 0 & 0&0 \\
0 & 0 &0& P_{2}%
\end{bmatrix}\geq0,
\end{equation}
 $P_1>0$, $P_2>0$ are defined as in Lemma \ref{NI-lemma-jordon-f} and $\bar{P}>0$ is
defined as in Lemma \ref{SNI-lemma}.

%================================
%========= T =================

\emph{Claim 1}: In the case when the  matrix $N$ in
\eqref{N-g-lemma-1} is  negative semidefinite, then  $T>0$ if and
only if \eqref{Eq:Con1:th2} is satisfied. Also, in the case when the
matrix $N$ in \eqref{N-g-lemma-1} is  positive semidefinite, then
$T>0$ if and only if \eqref{Eq:Con1:th2} and \eqref{Eq:Con2:th2} are
satisfied.

To establish this claim, we first note that since $\bar{G}(s)$ is
SNI, it follows from Lemma \ref{SNI-lemma} that $\bar{P}$ satisfies
 \eqref{LMI:I-La}. This implies that the condition  $T>0$  is equivalent to
 \begin{small}
\begin{align}
& P-C^{T}\bar{D}C-C^{T}\bar{C}\bar{P}^{-1}\bar{C}^{T}C>0, \notag\\
\Leftrightarrow & P-C^{T}(\bar{D}+\bar{C}\bar{P}^{-1}\bar{C}^{T})C>0, \notag\\
\Leftrightarrow & P-C^{T}(\bar{D}-\bar{C}\bar{A}^{-1}\bar{B})C>0 \text{ via \eqref{LMI:I-La} in Lemma  \ref{SNI-lemma},}\notag\\
\Leftrightarrow & P-C^{T}\bar{G}(0)C>0, \notag\\
\Leftrightarrow &
\begin{bmatrix}
P_{1} & 0 & 0&0 \\
0 & 0 & 0&0 \\
0 & 0 & 0&0 \\
0 & 0 &0& P_{2}%
\end{bmatrix}%
-%
\begin{bmatrix}
C_{1}^{T} \\
C_{2}^{T} \\
C_{3a}^{T}\\
C_{3b}^{T}
\end{bmatrix}%
\bar{G}(0)%
\begin{bmatrix}
C_{1} & C_{2} & C_{3a}&C_{3b}%
\end{bmatrix}
>0. \label{T-matrix-lmi}
\end{align}
\end{small}
Furthermore,  using the Schur complement of the LMI in
\eqref{T-matrix-lmi}, it is straightforward to verify   that the
condition $ T>0$ is equivalent to the conditions

\begin{align}
&\begin{bmatrix}
-C_{2}^{T}\bar{G}(0)C_{2} & -C_{2}^{T}\bar{G}(0)C_{3a} \\
-C_{3a}^{T}\bar{G}(0)C_{2} & -C_{3a}^{T}\bar{G}(0)C_{3a}%
\end{bmatrix}>0 \label{eq:shur:2:f}\\ \text{ and }& \notag\\%
&\begin{bmatrix}
P_{1}-C_{1}^{T}\bar{G}(0)C_{1} & -C_{1}^{T}\bar{G}(0)C_{3b} \\
-C_{3b}^{T}\bar{G}(0)C_{1} & P_{3b}-C_{3b}^{T}\bar{G}(0)C_{3b}%
\end{bmatrix}
\notag\\&-
\begin{bmatrix}
-C_{1}^{T}\bar{G}(0)C_{2} & -C_{1}^{T}\bar{G}(0)C_{3a} \\
-C_{3b}^{T}\bar{G}(0)C_{2} & -C_{3b}^{T}\bar{G}(0)C_{3a}%
\end{bmatrix}\notag\\
&\times
\begin{bmatrix}
-C_{2}^{T}\bar{G}(0)C_{2} & -C_{2}^{T}\bar{G}(0)C_{3a} \\
-C_{3a}^{T}\bar{G}(0)C_{2} & -C_{3a}^{T}\bar{G}(0)C_{3a}%
\end{bmatrix}^{-1}\notag\\
&\times \begin{bmatrix}
-C_{2}^{T}\bar{G}(0)C_{1} & -C_{2}^{T}\bar{G}(0)C_{3b} \\
-C_{3a}^{T}\bar{G}(0)C_{1} & -C_{3a}^{T}\bar{G}(0)C_{3b}%
\end{bmatrix}\notag\\
&>0.\label{Eq:shur:pr:2}%\\
%\Leftrightarrow &H_{22}>0\text{ and }%
%\begin{bmatrix}
%H_{11}-H_{12}H_{22}^{-1}H_{21} & H_{13}-H_{12}H_{22}^{-1}H_{23} \\
%H_{31}-H_{32}H_{22}^{-1}H_{21} & H_{3}-H_{32}H_{22}^{-1}H_{23}%
%\end{bmatrix}%
%>0.
\end{align}
%
%\begin{align*}
%&H_{22}>0\text{ and }%
%\begin{bmatrix}
%H_{11} & H_{13} \\
%H_{31} & H_{33}%
%\end{bmatrix}%
%-
%\begin{bmatrix}
%H_{12} \\
%H_{32}%
%\end{bmatrix}%
%H_{22}^{-1}%
%\begin{bmatrix}
%H_{21} & H_{23}%
%\end{bmatrix}%
%>0,\\
%\Leftrightarrow &H_{22}>0\text{ and }%
%\begin{bmatrix}
%H_{11}-H_{12}H_{22}^{-1}H_{21} & H_{13}-H_{12}H_{22}^{-1}H_{23} \\
%H_{31}-H_{32}H_{22}^{-1}H_{21} & H_{3}-H_{32}H_{22}^{-1}H_{23}%
%\end{bmatrix}%
%>0.
%\end{align*}

%This implies that $T>0$ if and only if
Moreover, \eqref{eq:shur:2:f} is equivalent to %$\begin{bmatrix}
%-C_{2}^{T}\bar{G}(0)C_{2} & -C_{2}^{T}\bar{G}(0)C_{3a} \\
%-C_{3a}^{T}\bar{G}(0)C_{2} & -C_{3a}^{T}\bar{G}(0)C_{3a}%
%\end{bmatrix}>0$
%\begin{description}
\begin{align}
%&\begin{bmatrix}
%-C_{2}^{T}\bar{G}(0)C_{2} & -C_{2}^{T}\bar{G}(0)C_{3a} \\
%-C_{3a}^{T}\bar{G}(0)C_{2} & -C_{3a}^{T}\bar{G}(0)C_{3a}%
%\end{bmatrix}%
%>0, \notag\\
%&\Leftrightarrow -%
-\begin{bmatrix}
C_{2}^{T} \\
C_{3a}^{T}%
\end{bmatrix}%
\bar{G}(0)%
\begin{bmatrix}
C_{2} & C_{3a}%
\end{bmatrix}=-\Xi%
>0,  \label{shour0}
\end{align}
where $\Xi$ is defined in \eqref{segma-N}. This is equivalent to
condition  \eqref{Eq:Con1:th2}.

Also, the condition \eqref{Eq:shur:pr:2} %$\begin{bmatrix}
%H_{11}-H_{12}H_{22}^{-1}H_{21} & H_{13}-H_{12}H_{22}^{-1}H_{23} \\
%H_{31}-H_{32}H_{22}^{-1}H_{21} & H_{33}-H_{32}H_{22}^{-1}H_{23}%
%\end{bmatrix}>0$
is equivalent to
\begin{align}
&
\begin{bmatrix}
P_{1} & 0 \\
0 & P_{2}%
\end{bmatrix}%
-%
\begin{bmatrix}
C_{1}^{T} \\
C_{3b}^{T}%
\end{bmatrix}%
\bar{G}(0)%
\begin{bmatrix}
C_{1} & C_{3b}%
\end{bmatrix}%
\notag\\&\;\;\;+%
\begin{bmatrix}
C_{1}^{T} \\
C_{3b}^{T}%
\end{bmatrix}%
\bar{G}(0)%
\begin{bmatrix}
C_{2} & C_{3a}%
\end{bmatrix}%
\Xi^{-1}\notag\\
&\;\;\;\times\begin{bmatrix}
C_{2}^T \\
C_{3a}^T%
\end{bmatrix}\bar{G}(0)\begin{bmatrix}
C_{1} & C_{3b}\end{bmatrix}
>0, \notag\\
%&\Leftrightarrow
%\begin{bmatrix}
%P_{1} & 0 \\
%0 & P_{2}%
%\end{bmatrix}%
%-%
%\begin{bmatrix}
%C_{1}^{T} \\
%C_{3b}^{T}%
%\end{bmatrix}\notag\\
%&\;\;\;\times
%\left( \bar{G}(0)-\bar{G}(0)%
%\begin{bmatrix}
%C_{2} & C_{3a}%
%\end{bmatrix}%
%\Xi^{-1}\begin{bmatrix}
%C_{2}^T \\
%C_{3a}^T%
%\end{bmatrix}
%\bar{G}(0)\right)\notag\\
%&\;\;\;\times
%\begin{bmatrix}
%C_{1} & C_{3b}%
%\end{bmatrix}%
%>0, \notag\\
&\Leftrightarrow
\begin{bmatrix}
P_{1} & 0 \\
0 & P_{2}%
\end{bmatrix}%
-%
\begin{bmatrix}
C_{1}^{T} \\
C_{3b}^{T}%
\end{bmatrix}%
N
\begin{bmatrix}
C_{1} & C_{3b}%
\end{bmatrix}%
>0 \text{ using \eqref{N-g-lemma-1}. } \label{Eq:shur:cond2-th2}
\end{align}
This condition is always satisfied in the case where $N$ is negative
semidefinite. Hence using \eqref{shour0},  we can conclude that
$T>0$ if and only if \eqref{Eq:Con1:th2} is satisfied in the case
when $N$ is negative semidefinite.

Now in the case when $N$ is positive  semidefinite, the condition
\eqref{Eq:shur:cond2-th2} can be rewritten as follows
\begin{equation*}%\label{Eq:shur:cond2-pos-th2}
     P_{f}-C_{f}^{T}NC_{f}>0,
\end{equation*} where
$P_f=\begin{bmatrix}
P_{1} & 0 \\
0 & P_{2}%
\end{bmatrix}>0$ and $C_f=\begin{bmatrix}
C_{1} & C_{3b}%
\end{bmatrix}$.
However, using the Schur complement, this %\eqref{Eq:shur:cond2-pos-th2}
is equivalent to the condition
\begin{align}
%&\Leftrightarrow P_{f}-C_{f}^{T}NC_{f}>0, \notag\\
&
\begin{bmatrix}
I & N^{\frac{1}{2}}C_{f} \\
C_{f}^{T}N^{\frac{1}{2}} & P_{f}%
\end{bmatrix}%
>0, \notag\\
&\Leftrightarrow  I-N^{\frac{1}{2}}C_{f}P_{f}^{-1}C_{f}^{T}N^{\frac{1%
}{2}}>0,\notag\\%\label{shour112}
%\end{align}
%%\end{description}
%
%It follows from \eqref{shour112} that
%\begin{align}
% \notag\\
%& \Leftrightarrow I-N^{\frac{1}{2}}%
%\begin{bmatrix}
%C_{1} & C_{3b}%
%\end{bmatrix}%
%\begin{bmatrix}
%P_{1}^{-1} & 0 \\
%0 & P_{2}^{-1}%
%\end{bmatrix}%
%\begin{bmatrix}
%C_{1}^{T} \\
%C_{3b}^{T}%
%\end{bmatrix}%
%N^{\frac{1}{2}}>0, \notag\\
%& \Leftrightarrow I-N^{\frac{1}{2}}\left(
%C_{1}P_{1}^{-1}C_{1}^{T}+C_{3b}P_{2}^{-1}C_{3b}^{T}\right) N^{\frac{1}{2%
%}}>0, \notag\\
& \Leftrightarrow I-N^{\frac{1}{2}}C_{1}P_{1}^{-1}C_{1}^{T}N^{%
\frac{1}{2}}-N^{\frac{1}{2}}C_{3b}P_{2}^{-1}C_{3b}^{T}N^{\frac{1}{2}%
} >0.\label{shour2}
\end{align}
%where $P_f=\begin{bmatrix}
%P_{1} & 0 \\
%0 & P_{2}%
%\end{bmatrix}>0$ and $C_f=\begin{bmatrix}
%C_{2} & C_{3a}%
%\end{bmatrix}$.
%\end{description}
%\begin{align}
% \Leftrightarrow &H_{2}>0 \notag\\
%\Leftrightarrow &%
%\begin{bmatrix}
%-C_{2}^{T}\bar{G}(0)C_{2} & -C_{2}^{T}\bar{G}(0)C_{3a} \\
%-C_{3a}^{T}\bar{G}(0)C_{2} & -C_{3a}^{T}\bar{G}(0)C_{3a}%
%\end{bmatrix}%
%>0 \notag\\
%\Leftrightarrow &-%
%\begin{bmatrix}
%C_{2}^{T} \\
%C_{3a}^{T}%
%\end{bmatrix}%
%\bar{G}(0)%
%\begin{bmatrix}
%C_{2} & C_{3a}%
%\end{bmatrix}%
%>0.
%\label{shour0}
%\end{align}
Now using \eqref{NI_WW4} in Lemma
\ref{NI-lemma-jordon-f}, we can define a matrix $M$ as
\begin{align*}
 M=W+L_{1}P_{1}^{-1}C_{1}^{T}%\label{NI_W4}
\end{align*} so that

\begin{align}\label{eq:mm1}
 M^TM&=C_{2}B_{2}+B_{2}^{T}C_{2}^{T}+C_{3}B_{3}+B_{3}^{T}C_{3}^{T}\notag\\
    &=C_{2}B_{2}+B_{2}^{T}C_{2}^{T}+C_{3a}B_{3a}+B_{3a}^{T}C_{3a}^{T}\notag\\&\;\;\;+C_{3b}B_{3b}+B_{3b}^{T}C_{3b}^{T}.
\end{align}
 Also using \eqref{Sim_LW} in Lemma \ref{NI-lemma-jordon-f}, we can write $B_1$ as
\begin{align*}
B_{1} =P_{1}^{-1}(A_{1}^{T}C_{1}^{T}-L_{1}^{T}W).%\label{NI_B1-2}
\end{align*}
Substituting  for $W$ in terms of $M$  %\eqref{NI_W4}
into
%\eqref{NI_B1-2}
this expression for $B_{1}$ gives
 %Since $G(s)$ is NI,  it follows from Lemma
%\ref{NI-lemma-jordon-f} that there exist  matrices $P_1>0, P_{2}>0$
%such that  \eqref{Sim_LL1}, \eqref{Sim_LW} and \eqref{Sim_WW1} hold.
%Using \eqref{Sim_LW}, we can write $B_1$ as,
%\begin{align}
%B_{1} =P_{1}^{-1}(A_{1}^{T}C_{1}^{T}-L_{1}^{T}W).\label{NI_B1}
%\end{align}
% Substituting   \eqref{Sim_LL1} and \eqref{NI_B1} in \eqref{Sim_WW1}
%gives
%\begin{align}
%&\left( W^{T}+C_{1}P_{1}^{-1}L_{1}^{T}\right) \left(
%W+L_{1}P_{1}^{-1}C_{1}^{T}\right)
%=C_{2}B_{2}+B_{2}^{T}C_{2}^{T}+C_{3}B_{3}+B_{3}^{T}C_{3}^{T},\label{NI_WW4}
%\end{align}
%which implies that
%\begin{align}
% W =M-L_{1}P_{1}^{-1}C_{1}^{T},\label{NI_W4}
%\end{align} where
%$M^2=C_{2}B_{2}+B_{2}^{T}C_{2}^{T}+C_{3}B_{3}+B_{3}^{T}C_{3}^{T}.$ Also, substituting    \eqref{NI_W4} in \eqref{NI_B1} gives
\begin{align}\label{B1_m}
B_{1}
&=P_{1}^{-1}(A_{1}^{T}C_{1}^{T}-L_{1}^{T}(M-L_{1}P_{1}^{-1}C_{1}^{T}))
\notag \\
&=P_{1}^{-1}A_{1}^{T}C_{1}^{T}-P_{1}^{-1}L_{1}^{T}M-P_{1}^{-1}L_{1}^{T}L_{1}P_{1}^{-1}C_{1}^{T}
\notag \\
&=-A_{1}P_{1}^{-1}C_{1}^{T}-P_{1}^{-1}L_{1}^{T}M.
 \end{align}
Also, from the definition of $N$ in \eqref{N-g-lemma-1}, it follows that
\begin{align}\label{Nc2M1}
%& N%
%\begin{bmatrix}
%C_{2} & C_{3a}%
%\end{bmatrix}%
%=0, \notag\\
& N%
\begin{bmatrix}
C_{2} & C_{3a}%
\end{bmatrix}%
=0.
\end{align}
Therefore, Lemma  \ref{b2-b3b-c2-c3a} implies
\begin{align*}
& N%
\begin{bmatrix}
B_{2}^T & B_{3b}^T%
\end{bmatrix}%
=0,\notag\\
&\Rightarrow N%
(C_{2}B_{2}+B_{2}^{T}C_{2}^{T}+C_{3a}B_{3a}\notag\\&\;\;\;\;\;\;\;\;+B_{3a}^{T}C_{3a}^{T}+C_{3b}B_{3b}+B_{3b}^{T}C_{3b}^{T})N%
=0, \notag\\
&\Rightarrow N(M^TM)N=0, \text{ using \eqref{eq:mm1}}.
\end{align*}
Hence, $M N=0.$
%\begin{align*}
% M N=0.%\label{NM1}
%\end{align*}%
Therefore,
\begin{align}
 M N^{\frac{1}{2}}=0.\label{eq:NM11}
\end{align}
Substituting this into \eqref{B1_m} implies
\begin{align}
&B_{1}N^{\frac{1}{2}}
=-A_{1}P_{1}^{-1}C_{1}^{T}N^{\frac{1}{2}}-P_{1}^{-1}L_{1}^{T}MN^{\frac{1}{2}},
\notag \\
\Rightarrow & B_{1}N^{\frac{1}{2}}
=-A_{1}P_{1}^{-1}C_{1}^{T}N^{\frac{1}{2}},
\notag \\
\Rightarrow & N^{\frac{1}{2}}C_1A_{1}^{-1}B_{1}N^{\frac{1}{2}}
=-N^{\frac{1}{2}}C_1P_{1}^{-1}C_{1}^{T}N^{\frac{1}{2}}.
\label{NI_B1f}
\end{align}
Substituting  \eqref{NI_B1f} into \eqref{shour2} gives the condition
 \begin{align*}
%&\begin{bmatrix}
%H_{1}-H_{12}H_{22}^{-1}H_{21} & H_{13}-H_{12}H_{22}^{-1}H_{23} \\
%H_{31}-H_{32}H_{22}^{-1}H_{21} & H_{3}-H_{32}H_{22}^{-1}H_{23}%
%\end{bmatrix}
%>0 \notag\\
%&\Leftrightarrow
I+ N^{\frac{1}{2}}C_1A_{1}^{-1}B_{1}N^{\frac{1}{2}}-N^{\frac{1}{2%
}}C_{3b}P_{2}^{-1}C_{3b}^{T}N^{\frac{1}{2}}
>0.%\label{shour3}
\end{align*} This is equivalent to condition \eqref{Eq:Con2:th2}. Hence, in the
case when $N$ is positive semidefinite, it follows from this and \eqref{shour0}  that $T>0$  if and only if conditions
\eqref{Eq:Con1:th2} and \eqref{Eq:Con2:th2} are satisfied. This
completes the proof of Claim 1.
%Note that  using \eqref{Sim_LW2} in Lemma \ref{NI-lemma-jordon-f}
%and the fact that $B_{3b}$ is full rank, it follows that the
%symmetric matrix $P_2$ satisfies
%\begin{align}\label{p_2-proof}
%P_2=C_{3a}^{T}B_{3b}^{T}(B_{3b}B_{3b}^{T})^{-1}.
%\end{align}
%===================
%===== end T ============
%==================

Now, observe that
\begin{align}\label{eq:lep:closed:loop}
&T\breve{A}+\breve{A}^{T}T \notag\\=& \ \begin{bmatrix}
P-C^{T} \bar{D}C & -C^{T} \bar{C} \\
-\bar{C}^{T}C & \bar{P}%
\end{bmatrix}
  \times \begin{bmatrix}
A+B \bar{D}C & B \bar{C} \\
\bar{B}C & \bar{A}%
\end{bmatrix}
\notag\\
&\: \: +
\begin{bmatrix}
A+B \bar{D}C & B \bar{C} \\
\bar{B}C & \bar{A}%
\end{bmatrix}
^{T}
 \times\begin{bmatrix}
P-C^{T} \bar{D}C & -C^{T} \bar{C} \\
-\bar{C}^{T}C & \bar{P}%
\end{bmatrix},
\notag\\
=&- \begin{bmatrix}
 C^{T}\bar{D}W^{T}+L^{T}  & C^{T}\bar{W}^{T} \\
\bar{C}^{T}W^{T} &  \bar{L}^{T}
\end{bmatrix}
    \begin{bmatrix}
 W\bar{D}C+L  & W\bar{C} \\
\bar{W}C &  \bar{L}
\end{bmatrix}%
\notag\\
\leq&0.
\end{align}

Together with Claim 1, this  implies  that $\breve{A}$ has all its
eigenvalues  in the closed left-half of the complex plane if and
only if conditions \eqref{Eq:Con1:th2} and \eqref{Eq:Con2:th2} are
satisfied  in the case when $N$ is positive semidefinite; e.g., see
Lemma 3.19 in \cite{Glover1996}. Similarly, in the case when $N$ is
negative semidefinite $\breve{A}$ has all its eigenvalues  in the
closed left-half of the complex plane if and only if condition
\eqref{Eq:Con1:th2} is satisfied.

 %We now show that  $\det (\breve{A})\neq 0$. Indeed, using the
% assumption that
%$(A+B\bar{G}(0)C)$ is nonsingular, we obtain
%
%\begin{align}\label{det_eq3}
%\det (\breve{A}) &=\det (\bar{A})\det ((A+B\bar{D}C-B\bar{C},\left(
%\bar{A}\right) ^{-1}\bar{B}C), \nonumber \\
% &=\det (\bar{A})\det (A+B\bar{G}(0)C), \nonumber\\
% &=\det (\bar{A})\det (A+B\bar{G}(0)C), \nonumber \\
%&\neq0,
%\end{align}
%since $\det (\bar{A})\neq0$ by Lemma \ref{SNI-lemma}.

In order to complete the proof of the sufficiency part of the
theorem, we must show that if conditions \eqref{Eq:Con1:th2} and
\eqref{Eq:Con2:th2} are satisfied  in the case when $N$ is positive semidefinite,  then the matrix $\breve{A}$ can have no
eigenvalues on the $j\omega$ axis.  Similarly, we must show that if conditions \eqref{Eq:Con1:th2} and
\eqref{Eq:Con3:th2} are satisfied in the case that $N$ is negative
semidefinite, then the matrix $\breve{A}$ can have no
eigenvalues on the $j\omega$ axis.

Indeed, using Lemma \ref{math4}, the fact that $G(s)$ is NI and the
fact that $\bar{G}(s)$ is SNI, we conclude that
$\det(I-G(j\omega)\bar{G}(j \omega))\neq 0$  for all $\omega >0$.
This implies that $\breve{A}$ has no eigenvalues on the imaginary
axis for $\omega >0$. Thus, to complete the proof, we will show that
in the case when $N$ is positive semidefinite,  conditions
\eqref{Eq:Con1:th2} and \eqref{Eq:Con2:th2} imply that  $\det
(\breve{A})\neq 0$. Similarly, in the case when $N$ is negative
semidefinite, we will show that  conditions \eqref{Eq:Con1:th2} and
\eqref{Eq:Con3:th2} imply that  $\det (\breve{A})\neq 0$. Indeed,
\begin{align}\label{det_eq3}
\det (\breve{A}) &=\det (\bar{A})\det (A+B\bar{D}C-B\bar{C}
\bar{A} ^{-1}\bar{B}C), \nonumber \\
 &=\det (\bar{A})\det (A+B\bar{G}(0)C).
\end{align}
This implies that $\det (\breve{A})\neq 0$ if  $\det
(A+B\bar{G}(0)C)\neq 0$, since  $\det (\bar{A})\neq0$   using Lemma
\ref{SNI-lemma} and the fact that $\bar{G}(s)$ is SNI. Now, define the  matrix
\begin{align}\label{lamda:matr1}
\Lambda=&\begin{bmatrix}
B_{2}\bar{G}(0)C_{2} & B_{2}\bar{G}(0)C_{3a} \\
B_{3b}\bar{G}(0)C_{2} & B_{3b}\bar{G}(0)C_{3a}%
\end{bmatrix}\notag \\=&\begin{bmatrix}
B_{2} \\
B_{3b}%
\end{bmatrix}%
\bar{G}(0)%
\begin{bmatrix}
C_{2} & C_{3a}%
\end{bmatrix}.
\end{align}
It follows from Lemma \ref{b2-b3b-c2-c3a} that there exists a
non-singular matrix $R_d$ such that
\begin{align}\label{lamda2}
%\Lambda=&\begin{bmatrix}
%B_{2}\bar{G}(0)C_{2} & B_{2}\bar{G}(0)C_{3a} \\
%B_{3b}\bar{G}(0)C_{2} & B_{3b}\bar{G}(0)C_{3a}%
%\end{bmatrix}\notag\\
%=&\begin{bmatrix}
%B_{2} \\
%B_{3b}%
%\end{bmatrix}%
%\bar{G}(0)%
%\begin{bmatrix}
%C_{2} & C_{3a}%
%\end{bmatrix}\notag\\
\Lambda=&R_d\begin{bmatrix}
C^{T}_{2} \\
C^{T}_{3a}%
\end{bmatrix}%
\bar{G}(0)%
\begin{bmatrix}
C_{2} & C_{3a}%
\end{bmatrix}=R_d\Xi.
\end{align}
Since the matrix  $\Xi$ is assumed to be invertible, this implies
that the matrix $\Lambda$ in \eqref{lamda:matr1} is invertible.
% and $N_d=\left( \bar{G}(0)-\bar{G}(0)%
%\begin{bmatrix}
%C_{2} & C_{3a}%
%\end{bmatrix}%
%\Lambda^{-1}%
%\begin{bmatrix}
%B_{2} \\
%B_{3b}%
%\end{bmatrix}%
%\bar{G}(0)\right)$. Using Lemma \ref{b2-b3b-c2-c3a}, it follows that
%there exist a non-singular matrix $R_d$ such that
%\begin{align}\label{lamda2}
%\Lambda=&\begin{bmatrix}
%B_{2}\bar{G}(0)C_{2} & B_{2}\bar{G}(0)C_{3a} \\
%B_{3b}\bar{G}(0)C_{2} & B_{3b}\bar{G}(0)C_{3a}%
%\end{bmatrix}\notag\\
%=&\begin{bmatrix}
%B_{2} \\
%B_{3b}%
%\end{bmatrix}%
%\bar{G}(0)%
%\begin{bmatrix}
%C_{2} & C_{3a}%
%\end{bmatrix}\notag\\
%=&R_d\begin{bmatrix}
%C^{T}_{2} \\
%C^{T}_{3a}%
%\end{bmatrix}%
%\bar{G}(0)%
%\begin{bmatrix}
%C_{2} & C_{3a}%
%\end{bmatrix}=R_d\Xi.
%\end{align}
%Also,
%\begin{align}\label{N-d2}
%N_d=&\left( \bar{G}(0)-\bar{G}(0)%
%\begin{bmatrix}
%C_{2} & C_{3a}%
%\end{bmatrix}%
%\Lambda^{-1}%
%\begin{bmatrix}
%B_{2} \\
%B_{3b}%
%\end{bmatrix}%
%\bar{G}(0)\right)\notag\\
%=&\left( \bar{G}(0)-\bar{G}(0)%
%\begin{bmatrix}
%C_{2} & C_{3a}%
%\end{bmatrix}%
%\Xi^{-1}%
%\begin{bmatrix}
%C^{T}_{2} \\
%C^{T}_{3a}%
%\end{bmatrix}%
%\bar{G}(0)\right)=N.
%\end{align}

Now, substituting  \eqref{J_C_F} into \eqref{det_eq3}, it is
straightforward to verify  that
\begin{small}
%\scalebox{0.7}{
\begin{align}
&\det (A+B\bar{G}(0)C)\notag \\
 =&-\det \Lambda \det \notag\\
&\left(\begin{array}{c}
\begin{bmatrix}
A_{1} & 0 \\
0 & I%
\end{bmatrix}%
+%
\begin{bmatrix}
B_{1} \\
B_{3a}%
\end{bmatrix}
 \\ \times
\left( \bar{G}(0)-\bar{G}(0)%
\begin{bmatrix}
C_{2} & C_{3a}%
\end{bmatrix}%
\Lambda^{-1}%
\begin{bmatrix}
B_{2} \\
B_{3b}%
\end{bmatrix}%
\bar{G}(0)\right)\\ \times
\begin{bmatrix}
C_{1} & C_{3b}%
\end{bmatrix}\end{array}
\right)\notag  \\
=&-\det \Lambda \det \notag\\ &\left(\begin{array}{c}
\begin{bmatrix}
A_{1} & 0 \\
0 & I%
\end{bmatrix}%
+%
\begin{bmatrix}
B_{1} \\
B_{3a}%
\end{bmatrix}\\ \times
\left( \bar{G}(0)-\bar{G}(0)%
\begin{bmatrix}
C_{2} & C_{3a}%
\end{bmatrix}%
(R_d\Xi)^{-1}%
R_d\begin{bmatrix}
C^{T}_{2} \\
C^{T}_{3a}%
\end{bmatrix}
\bar{G}(0)\right)\\ \times
\begin{bmatrix}
C_{1} & C_{3b}%
\end{bmatrix}\end{array}
\right)\notag  \\
=&-\det \Lambda \det \left(
\begin{bmatrix}
A_{1} & 0 \\
0 & I%
\end{bmatrix}%
+%
\begin{bmatrix}
B_{1} \\
B_{3a}%
\end{bmatrix}%
N%
\begin{bmatrix}
C_{1} & C_{3b}%
\end{bmatrix}%
\right) \label{det-G1-G2-0} \\
=&-\det \Lambda \det
\begin{bmatrix}
A_{1} & 0 \\
0 & I%
\end{bmatrix}\notag\\ & \;\;\;\times
\det \left( I+%
\begin{bmatrix}
A_{1} & 0 \\
0 & I%
\end{bmatrix}%
^{-1}%
\begin{bmatrix}
B^{1} \\
B^{3a}%
\end{bmatrix}%
N%
\begin{bmatrix}
C_{1} & C_{3b}%
\end{bmatrix}%
\right) \notag \\
=&-\det \Lambda \det
\begin{bmatrix}
A_{1} & 0 \\
0 & I%
\end{bmatrix}\notag\\ & \;\;\;\times
\det \left( I+%
\begin{bmatrix}
C_{1} & C_{3b}%
\end{bmatrix}%
\begin{bmatrix}
A_{1} & 0 \\
0 & I%
\end{bmatrix}%
^{-1}%
\begin{bmatrix}
B^{1} \\
B^{3a}%
\end{bmatrix}%
N\right) \notag \\
&=-\det \Lambda \det
\begin{bmatrix}
A_{1} & 0 \\
0 & I%
\end{bmatrix}%
\det \left( I+\left( C_{1}A_{1}^{-1}B_{1}+C_{3b}B_{3a}\right)
N\right). \label{det3}
%&=-\det \Lambda \det
%\begin{bmatrix}
%A_{1} & 0 \\
%0 & I%
%\end{bmatrix}%
%\det \left[ I+\left( -G_{0}+C_{3b}B_{3a}\right) N_d\right].
\end{align}
\end{small}

In the case when $N$ is positive semidefinite,  %Using
%\eqref{lamda2}, \eqref{N-d2} and the fact that $N\geq0$ or $N\leq0$
  \eqref{lamda2} and \eqref{det3} imply  that
\begin{align}\label{det321}
&\det (A+B\bar{G}(0)C)\notag\\&=-\det R_d\det\Xi \det
\begin{bmatrix}
A_{1} & 0 \\
0 & I%
\end{bmatrix}%
\notag\\&\;\;\;\;\;\times \det \left[ I+\left( C_{1}A_{1}^{-1}B_{1}+C_{3b}B_{3a}\right) N^{\frac{1}{2}}N^{\frac{1}{2}%
}\right], \notag   \\
&=-\det R_d\det\Xi\det
\begin{bmatrix}
A_{1} & 0 \\
0 & I%
\end{bmatrix}%
\notag\\&\;\;\;\;\;\times\det \left[ I+N^{\frac{1}{2}}\left( C_{1}A_{1}^{-1}B_{1}+C_{3b}B_{3a}\right) N^{\frac{1}{2}%
}\right].
\end{align}
Now using \eqref{Nc2M1}, it follows that the columns of the matrix $N^{\frac{1}{2}}$ are contained in the set  $ \mathcal{N}\left(
\begin{bmatrix}
C_{2}^{T} \\
C_{3a}^{T}%
\end{bmatrix}\right)$. Hence, it follows from the second part  of Lemma \ref{b2-b3b-c2-c3a} that $(B_{3a}+P_2^{-1}C_{3b}^T)N^{\frac{1}{2}}=0$. This implies that
%Using Lemma \ref{b2-b3b-c2-c3a} and the fact that $N^{\frac{1}{2}}
%\in \mathcal{N}\left(
%\begin{bmatrix}
%C_{2}^{T} \\
%C_{3a}^{T}%
%\end{bmatrix}\right)$ it follows that
\begin{equation*}%\label{eq:nul32}
 N^{\frac{1}{2}}C_{3b}B_{3a} N^{\frac{1}{2}}=-N^{\frac{1}{2}%
}C_{3b}P_{2}^{-1}C^{T}_{3b}N^{\frac{1}{2}}.
\end{equation*}
Hence \eqref{det321} can be written as
\begin{align}
&\det (A+B\bar{G}(0)C)\notag \\&=-\det R_d\det\Xi\det
\begin{bmatrix}
A_{1} & 0 \\
0 & I%
\end{bmatrix}%
\notag\\&\;\;\;\;\;\times\det \left[ I+N^{\frac{1}{2}}C_{1}A_{1}^{-1}B_{1}N^{\frac{1}{2}}-N^{\frac{1}{2}%
}C_{3b}P_{2}^{-1}C^{T}_{3b}N^{\frac{1}{2}}\right],
 \notag \\
&=-\det R_d\det\Xi\det
\begin{bmatrix}
A_{1} & 0 \\
0 & I%
\end{bmatrix}%
\notag\\&\;\;\;\;\;\times \det \left[ I+N^{\frac{1}{2}}C_{1}A_{1}^{-1}B_{1}N^{\frac{1}{2}}-N^{\frac{1}{2}}%
C_{3b}P_{3b}^{-1}C^{T}_{3b}N^{\frac{1}{2}}\right].
\end{align}
Since the matrices $R_d, A_{1}, \bar{A}$ are invertible and also using
\eqref{Eq:Con1:th2}-\eqref{Eq:Con2:th2}, \eqref{det_eq3}, it follows
that $\det (\breve{A})\neq 0$ as required.

In the case when $N$ is negative  semidefinite,  we consider the  matrix
$\tilde{N}=(-N)^{\frac{1}{2}}$. Then \eqref{det3} implies  that
\begin{align}\label{det3211}
&\det (A+B\bar{G}(0)C)\notag\\%&=-\det R_d\det\Xi \det
%\begin{bmatrix}
%A_{1} & 0 \\
%0 & I%
%\end{bmatrix}%
%\notag\\&\;\;\;\;\;\times\det \left[ I-\left( C_{1}A_{1}^{-1}B_{1}+C_{3b}B_{3a}\right) \tilde{N}^2%
%\right], \notag   \\
&=-\det R_d\det\Xi\det
\begin{bmatrix}
A_{1} & 0 \\
0 & I%
\end{bmatrix}%
\notag\\&\;\;\;\;\;\times\det \left[ I-\tilde{N}\left(
C_{1}A_{1}^{-1}B_{1}+C_{3b}B_{3a}\right) \tilde{N}\right].
\end{align}
Using \eqref{Nc2M1}, it follows that the columns of the matrix $\tilde{N}$ are contained in the set  
$ \mathcal{N}\left(
\begin{bmatrix}
C_{2}^{T} \\
C_{3a}^{T}%
\end{bmatrix}\right)$. Hence, it follows from the second part  of Lemma \ref{b2-b3b-c2-c3a} that $(B_{3a}+P_2^{-1}C_{3b}^T)\tilde{N}=0$. This implies
%Using Lemma \ref{b2-b3b-c2-c3a} and the fact that $\tilde{N} \in
%\mathcal{N}\left(
%\begin{bmatrix}
%C_{2}^{T} \\
%C_{3a}^{T}%
%\end{bmatrix}\right)$ it follows that
\begin{equation*}%\label{eq:nul32}
 \tilde{N}C_{3b}B_{3a} \tilde{N}=-\tilde{N}C_{3b}P_{2}^{-1}C^{T}_{3b}\tilde{N}.
\end{equation*}
Hence,  \eqref{det3211} can be written as
\begin{align}
&-\det R_d\det\Xi\det
\begin{bmatrix}
A_{1} & 0 \\
0 & I%
\end{bmatrix}%
\notag\\&\;\;\;\;\;\times\det \left[
I-\tilde{N}C_{1}A_{1}^{-1}B_{1}\tilde{N}+\tilde{N}C_{3b}P_{2}^{-1}C^{T}_{3b}\tilde{N}\right]
, \notag \\
&=-\det R_d\det\Xi\det
\begin{bmatrix}
A_{1} & 0 \\
0 & I%
\end{bmatrix}%
\notag\\&\;\;\;\;\;\times \det \left[ I+N^{\frac{1}{2}}C_{1}A_{1}^{-1}B_{1}N^{\frac{1}{2}}-N^{\frac{1}{2}}%
C_{3b}P_{3b}^{-1}C^{T}_{3b}N^{\frac{1}{2}}\right].
\end{align}
Since the matrices $R_d, A_{1}, \bar{A}$ are invertible and also using
\eqref{Eq:Con1:th2}-\eqref{Eq:Con3:th2}, \eqref{det_eq3}, it follows
that $\det (\breve{A})\neq 0$ as required. This completes the proof of the sufficiency part of the theorem.

%Since the matrices $R_d, A_{1}$ are invertible and also using
%\eqref{Eq:Con1:th2} and \eqref{Eq:Con3:th2}, it follows that $\det
%(\breve{A})\neq 0$.

To complete the proof of the necessity part of the theorem, suppose
that the positive-feedback interconnection between the NI transfer
function matrix $G(s)$ and the SNI  transfer function matrix
$\bar{G}(s)$ is internally stable.  This implies that  the matrix
$\breve{A}$ is Hurwitz and hence has all its eigenvalues are in the
open left-half of the complex plane. This together with Claim 1 and
\eqref{eq:lep:closed:loop}  implies that conditions
\eqref{Eq:Con1:th2} and \eqref{Eq:Con3:th2}  are satisfied in the
case when $N$ is negative  semidefinite. Similarly, in the case when
$N$ is positive semidefinite, Claim 1 and \eqref{eq:lep:closed:loop}
implies that conditions \eqref{Eq:Con1:th2} and \eqref{Eq:Con2:th2}
are satisfied. This completes the proof of the theorem.
%Conditions
%\eqref{Eq:Con1:th2} and \eqref{Eq:Con2:th2} in the case when $N$ is
%positive semidefinite or  \eqref{Eq:Con1:th2} and
%\eqref{Eq:Con3:th2} in the case when $N$ is negative semidefinite
%must be satisfied. This completes the proof of the necessity part of
%the theorem. This completes the proof  of the  theorem.
 %\begin{}
% $\blacksquare$
% \end{}
\hfill $\blacksquare$

%==========================
%==========================
%==========================
%==========================
%==========================
%==========================
%Now consider the following    corollaries  %describing    special
%%cases of  Theorem \ref{Pre_main_re}. These corollaries
%which  will be used in proving the remaining   results of the paper.  %Theorem \ref{min:result:clo1} and
%%Corollaries \ref{min:result:clo3}-\ref{min:result:clo2}.
%%==========================
%%==========================
%===Corollar 33333333....11111111111======
%==========================
%==========================

%\begin{corollary}\label{Pre_main_re-G1-G2-0}
%Suppose that the matrix $\Xi$ in \eqref{segma-N} is non-singular and
%the matrix  $N$ in \eqref{N-g-lemma-1} satisfies $N\begin{bmatrix}
%C_{1} & C_{3b}
%\end{bmatrix}=0$.
%Also suppose that the transfer function matrix $G(s),$ with the
%minimal state space realization (\ref{eq:xdot})-(\ref{eq:y}), is NI
%and   the transfer function matrix $\bar{G}(s),$ with the minimal
%state space realization (\ref{SNI:eq:xdot})-(\ref{SNI:eq:y}), is
%SNI. Then the closed-loop positive-feedback interconnection  between
%$G(s)$ and $\bar{G}(s)$ is internally stable   if and only if
%conditions \eqref{Eq:Con1:th2} is satisfied.
%\end{corollary}
%of Corollary \ref{Pre_main_re-G1-G2-0}
\emph{Proof of Corollary \ref{Pre_main_re-G1-G2-0}:} The proof of
this  corollary will proceeds in an almost identical fashion to the
proof of Theorem \ref{Pre_main_re}. Indeed, we first state the
following claim:

\emph{Claim 2}: Assume that the matrix  $N$ in \eqref{N-g-lemma-1}
satisfies $N\begin{bmatrix} C_{1} & C_{3b}
\end{bmatrix}=0$, then  $T>0$ if and only if \eqref{Eq:Con1:th2} is satisfied.

This claim corresponds  to Claim 1 in Theorem \ref{Pre_main_re} when
we relax the conditions on the matrix $N$. The proof of this claim
is similar to the proof of Claim 1 in the proof of Theorem
\ref{Pre_main_re} since \eqref{Eq:shur:cond2-th2} is automatically
satisfied in the case when $N\begin{bmatrix} C_{1} & C_{3b}
\end{bmatrix}=0$.

Also,  the determinant condition in \eqref{det_eq3} will be
automatically satisfied using the fact $N\begin{bmatrix} C_{1} &C_{3b}
\end{bmatrix}=0$ in  \eqref{det-G1-G2-0}. The proof of the corollary then follows as in the proof of Theorem \ref{Pre_main_re}. \hfill $\blacksquare$

\emph{Proof of Corollary \ref{min:result:clo2-jour}: } Note that for
the system \eqref{eq:xdot}-\eqref{J_C_F} corresponding to the case
of this corollary, the  conditions
\eqref{Eq:Con1:th2}-\eqref{Eq:Con3:th2} in Theorem \ref{Pre_main_re}
 reduce to conditions \eqref{Eq:Con1:crol7}-\eqref{Eq:Con3:crol7}. Then the
proof of the corollary proceeds in an identical fashion to the proof
of Theorem \ref{Pre_main_re} for the special case being considered,
where the matrix $P$ defined in \eqref{Eq:P:NI:th2} becomes a matrix
of the form $P=\begin{bmatrix}
P_{1} & 0 &0 \\
0 & 0 &0 \\
0 & 0& P_2
\end{bmatrix}$. \hfill $\blacksquare$

\emph{Proof of Corollary \ref{min:result:clo1-jour}:} First note
that for the  system \eqref{eq:xdot}-\eqref{J_C_F} corresponding to
the case of this corollary, the  conditions
\eqref{Eq:Con1:th2}-\eqref{Eq:Con3:th2} in Theorem \ref{Pre_main_re}
reduce to conditions \eqref{Eq:Con1:crol6}-\eqref{Eq:Con3:crol6}.
Then the proof of the corollary proceeds in an identical fashion to
the proof of Theorem \ref{Pre_main_re} for the special case being
considered, where the matrix $P$ defined in \eqref{Eq:P:NI:th2}
becomes a matrix of the form $P=\begin{bmatrix}
P_{1} & 0 \\
0 & 0%
\end{bmatrix}$. \hfill $\blacksquare$

\section{Appendix B}
\label{appB}

Here, we present the proof of the main results in the paper.

We first show   that any NI system can be transformed to the block
diagonal form given  in \eqref{eq:xdot}-\eqref{J_C_F}.

\begin{lemma}\label{jordon-lemma}
Any NI system with transfer function matrix $G(s)$ and minimal state
space realization \eqref{eq:xdot}, can be transformed
to the  block diagonal form given in \eqref{J_C_F}.
\end{lemma}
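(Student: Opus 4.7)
The plan is to bring the minimal realization \eqref{eq:xdot} into the required block diagonal form by a similarity transformation $x \mapsto T^{-1}x$, where $T$ is obtained by first producing a (real) Jordan canonical form of $A$ and then applying a coordinate permutation on the part corresponding to the double poles at the origin. The key input from the NI hypothesis is Condition 4 of Definition \ref{Def:NI}, which severely restricts the Jordan structure of $A$ at the eigenvalue 0.

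First, I would establish that every Jordan block of $A$ associated with eigenvalue 0 has size at most 2. Since the realization is minimal, the largest Jordan block of $A$ at 0 equals the order of the pole of $G(s)$ at $s=0$ (this is the standard correspondence between the Jordan structure of $A$ and the partial multiplicities in the Smith-McMillan form of a minimal realization). Condition 4 of Definition \ref{Def:NI} states that $\lim_{s \to 0} s^{k} G(s) = 0$ for all $k \geq 3$, so the pole order of $G$ at 0 is at most 2. Hence every Jordan block of $A$ at eigenvalue 0 has size 1 or 2, and all other Jordan blocks of $A$ correspond to nonzero eigenvalues.

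Second, I would apply a similarity $T_{1}$ that puts $A$ in real Jordan canonical form and groups like-blocks together:
\begin{equation*}
T_{1}^{-1} A \, T_{1} = \begin{bmatrix} A_{1} & 0 & 0 \\ 0 & 0 & 0 \\ 0 & 0 & \tilde{A}_{3} \end{bmatrix},
\end{equation*}
where $A_{1}$ collects all Jordan blocks for nonzero eigenvalues (so $A_{1}$ is nonsingular), the middle block $0 \in \mathbb{R}^{n_{2} \times n_{2}}$ collects the $1 \times 1$ Jordan blocks at 0, and $\tilde{A}_{3}$ is the direct sum of $k$ copies of the $2 \times 2$ nilpotent Jordan block $\begin{bmatrix} 0 & 1 \\ 0 & 0 \end{bmatrix}$. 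Third, I would apply a permutation similarity $\Pi$ on the coordinates of the $\tilde{A}_{3}$-block that reorders the basis $(e_{1},e_{2},\ldots,e_{2k-1},e_{2k})$ as $(e_{1},e_{3},\ldots,e_{2k-1},e_{2},e_{4},\ldots,e_{2k})$; a direct computation yields
\begin{equation*}
\Pi^{-1} \tilde{A}_{3} \, \Pi = \begin{bmatrix} 0 & I_{k \times k} \\ 0 & 0 \end{bmatrix} = A_{3}
\end{equation*}
as in \eqref{matA3}. Extending $\Pi$ by the identity on the other coordinate blocks and composing with $T_{1}$ gives the overall similarity $T$, and the induced partitions of $T^{-1}B$ into $(B_{1}, B_{2}, B_{3a}^{T}, B_{3b}^{T})^{T}$ and of $CT$ into $(C_{1}, C_{2}, C_{3a}, C_{3b})$ are then automatic. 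Minimality is preserved since similarity transformations preserve controllability and observability.

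The main obstacle is the rigorous justification of Step 1: tying the coefficient-level condition $\lim_{s \to 0} s^{k}G(s) = 0$ for $k \geq 3$ to the geometric statement that no Jordan block at 0 can have size greater than 2. The cleanest route is via the Laurent expansion $C(sI-A)^{-1}B = \sum_{j=1}^{\ell} s^{-j} C A^{j-1} P_{0} B + (\text{analytic})$, where $P_{0}$ is the spectral projector onto the generalized eigenspace at 0 and $\ell$ is the size of the largest Jordan block there; minimality ensures that the leading coefficient $C A^{\ell-1} P_{0} B$ is nonzero, so $\lim_{s \to 0} s^{\ell} G(s) \neq 0$ and therefore $\ell \leq 2$ by Condition 4.
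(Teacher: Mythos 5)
Your proposal is correct and follows essentially the same route as the paper's proof: pass to a real Jordan form ordered so that the zero-eigenvalue blocks come last, permute coordinates to put the nilpotent part into the form \eqref{matA3}, and rule out Jordan blocks of order $\geq 3$ at the origin by combining minimality with Condition~4 of Definition~\ref{Def:NI}. Your Laurent-expansion/spectral-projector justification of that last step is simply a more explicit version of the paper's one-line appeal to minimality implying $\lim_{s\to 0}s^{3}G(s)\neq 0$.
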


\begin{proof}
Suppose that the transfer function matrix $G(s),$  with a minimal
state space realization $\begin{bmatrix}
\begin{array}{c|c}
A & B \\ \hline C & 0
\end{array}
\end{bmatrix}$ is NI. It follows from Theorem 2.1.1 in \cite{chen1998h} that we can find a non-singular  state space
transformation matrix $T$ such that  the matrix $T^{-1}AT $  is in real
Jordon  block diagonal form and the realization $\begin{bmatrix}
\begin{array}{c|c}
T^{-1}AT & T^{-1}B \\ \hline CT & 0
\end{array}
\end{bmatrix}$ is minimal. Also, we can choose this  transformation so
that   the  Jordon blocks of $T^{-1}AT $ are ordered  according to
the magnitudes of the corresponding  eigenvalues of the matrix $A$,
such that the last blocks  correspond to  the zero eigenvalues of
$A$ if they  exist. Furthermore, this transformation can be chosen
so that the Jordan blocks corresponding to the zero eigenvalues are
ordered according to increasing order of the Jordan blocks. Also, a
further transformation can be applied so that the matrix $A_3$
corresponding to the Jordan blocks of order two is of the form
\eqref{matA3}.

Now, we claim that if $G(s)$ is NI, then there are  no  Jordan
blocks corresponding to zero eigenvalues of order greater than or
equal to three. To prove this claim, suppose that there is a Jordon
block of $A$ corresponding to a zero eigenvalue of order greater
than or equal to three. This together with the minimality of the
realization  implies that $G_3=\underset{s\longrightarrow 0}{\lim
}s^{3}G(s)\neq0$ which contradicts  the NI  definition. Thus the
zero eigenvalues of $A$ will only have  Jordon blocks of order one
or two. From this,  it now follows that the matrix $T^{-1}AT $ will
be of the form \eqref{J_C_F}. This completes the proof of the lemma.
\end{proof}

The next lemma is a technical lemma, which will be used in order to
prove  our results.

\begin{lemma}\label{matrix-lemma}
For any full rank matrices $A,B,C$ and $D$ which satisfy $AB=CD$
where $A \in \mathbb{R}^{n \times r},B \in \mathbb{R}^{r \times n},C
\in \mathbb{R}^{n \times r},D \in \mathbb{R}^{r \times n}$ and
$n\geq r$,  there exists an invertible matrix $R$ such that $A=CR$
and $B=R^{-1}D.$
\end{lemma}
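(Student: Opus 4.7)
The plan is to exploit the fact that the equation $AB=CD$ forces $A$ and $C$ to share the same column space, and then read off $R$ as the change-of-basis matrix between $A$ and $C$ viewed as bases of that column space.

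First I would observe that since $n \geq r$, the full-rank hypothesis on $A$ and $C$ means they have full column rank $r$, while $B$ and $D$ have full row rank $r$. Consequently $B$ admits a right inverse $B^{\dagger} \in \mathbb{R}^{n\times r}$ with $BB^{\dagger}=I_r$, and similarly $DD^{\dagger}=I_r$. Multiplying the identity $AB=CD$ on the right by $B^{\dagger}$ gives $A=CDB^{\dagger}$, so every column of $A$ lies in the column space of $C$; the symmetric manipulation with $D^{\dagger}$ shows every column of $C$ lies in the column space of $A$. Hence $\mathrm{col}(A)=\mathrm{col}(C)$, and this common subspace has dimension $r$.

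Next, since $A$ and $C$ are two bases of the same $r$-dimensional subspace of $\mathbb{R}^n$, there exists an invertible $r\times r$ matrix $R$ with $A=CR$. Explicitly, one may take $R=(C^TC)^{-1}C^TA$, which is invertible because its inverse is $(A^TA)^{-1}A^TC$. Substituting $A=CR$ into $AB=CD$ yields $CRB=CD$. Finally, since $C$ has full column rank, it possesses a left inverse $C^{\ell}=(C^TC)^{-1}C^T$; multiplying on the left gives $RB=D$, and hence $B=R^{-1}D$, as required.

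No real obstacle is anticipated: the result is a straightforward linear-algebra fact and the only subtlety is ensuring that right/left invertibility is used on the correct side. The key conceptual step is recognising that $AB=CD$ together with the full-rank hypotheses forces the column spaces of $A$ and $C$ to coincide, after which the construction of $R$ is immediate.
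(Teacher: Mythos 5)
Your proof is correct and follows essentially the same elementary one-sided-inverse argument as the paper: the paper takes $R = DB^{T}(BB^{T})^{-1}$ directly from $AB=CD$ and shows invertibility by a null-space contradiction, whereas you build $R$ from the left inverse of $C$ and certify invertibility by exhibiting the symmetric formula for $R^{-1}$; by uniqueness (since $C$ has full column rank) these are the same matrix $R$. No gap.
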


\begin{proof}
Since $B$ is of  full rank, and $n\geq r$, $AB=CD$ implies
\begin{align*}
 &ABB^T=CDB^T,\\
 \Rightarrow &A=CDB^T(BB^T)^{-1},\\
 \Rightarrow &A=CR
\end{align*}
where $R=DB^T(BB^T)^{-1}$.
%
% that $\mathcal{N}(A)=\mathcal{N}(C)$, and
%$\mathcal{R}(A)=\mathcal{R}(C)$. Now, let $a_{i}$ be the ith column
%of $A$. It follows that $a_{i}\in \mathcal{R}(C)$. This implies that
%there exists an $n \times 1$ vector $r_{i}$ such that
%$a_{i}=Cr_{i}$. Now define  a $n \times n$ matrix $R$ whose columns are
%the $r_{i}^{\prime s}$. This implies that $A=CR$.

To show that $R$ is nonsingular, suppose that $R$ is singular. Then
there exists a non-zero $n \times 1$ vector $x$ such that $Rx=0$.
This implies that $Ax=0$ which contradicts the fact that $A$ is a
full rank. Hence, that there exists a nonsingular matrix $R$ such
that $A=CR$. Also, since $C$  is of full rank and $n\geq r$, it
follows that $C$ has a left inverse, which implies that $RB=D;$
i.e., $B=R^{-1}D$. This complete the proof.
\end{proof}

 \emph{Proof of Theorem \ref{min:result}:}  Lemma
\ref{jordon-lemma} shows that any strictly proper  NI system can be represented  in
the  block diagonal form
 \eqref{eq:xdot}-\eqref{J_C_F}. This implies that we only need to show the
 equivalence
 of the assumptions and
the conditions \eqref{Eq:Con1:th2}-\eqref{Eq:Con3:th2} in
 Theorem \ref{Pre_main_re} and the assumptions and the  conditions \eqref{Eq:Con1:th1}-\eqref{Eq:Con3:th1} in this theorem.

  First, it is straightforward to verify  that the condition $k\neq0$ is equivalent to the condition   $G_2\neq0$. Also, it follows from \eqref{Jour-f_G0} and \eqref{JJ-m} that there exists a full rank matrix $J$ such that
 \begin{align*}
 C_{3a}B_{3b}=JJ^{T}.%\label{JJ-m2}
 \end{align*}
Also, it follows from Lemma \ref{full-rank} and  Lemma
\ref{matrix-lemma}  that there exists an invertible matrix $X$ such
that $C_{3a}=JX$ and $B_{3b}=X^{-1}J^{T}$. We let
$P_{2}=C_{3a}^{T}B_{3b}^{T}(B_{3b}B_{3b}^{T})^{-1}$ and  note that
$B_{3b}B_{3b}^{T}$ is invertible since $B_{3b}$ is of  full rank.
Then Lemma  \ref{NI-lemma-jordon-f}  implies that $P_2$ is symmetric
and also we obtain
\begin{align}\label{P3b_2}
P_{2} =X^{T}X.
\end{align}
In the case when  $N$ is positive semidefinite, the definition of
$N$ implies that $N
\begin{bmatrix} C_2&C_{3a}\end{bmatrix}=0,$ and hence
$N^{\frac{1}{2}} \begin{bmatrix} C_2&C_{3a}\end{bmatrix}=0.$
 Using \eqref{Jour-f_G0}, it follows that $G_{1}=\begin{bmatrix} C_2&C_{3a}\end{bmatrix}\begin{bmatrix}
B_2\\B_{3a}\end{bmatrix}+C_{3b}B_{3b},$ which implies
\begin{align}
& N^{\frac{1}{2}}G_{1}=N^{\frac{1}{2}}\begin{bmatrix}
C_2&C_{3a}\end{bmatrix}\begin{bmatrix}
B_2\\B_{3a}\end{bmatrix}+N^{\frac{1}{2}}C_{3b}B_{3b},\notag\\
&\Rightarrow N^{\frac{1}{2}}G_{1}=N^{\frac{1}{2}}C_{3b}B_{3b}\text{
since }N^{\frac{1}{2}}\begin{bmatrix}
C_2&C_{3a}\end{bmatrix}=0,\notag\\
&\Rightarrow N^{\frac{1}{2}}G_{1}J=N^{\frac{1}{2}}C_{3b}X^{-1}J^{T}J,\notag\\
&\Rightarrow
N^{\frac{1}{2}}G_{1}J(J^{T}J)^{-1}=N^{\frac{1}{2}}C_{3b}X^{-1}\label{C3b_2}.
\end{align}
Substituting \eqref{Jour-f_G0}, \eqref{P3b_2} and  \eqref{C3b_2}
into condition \eqref{Eq:Con2:th2}  in Theorem  \ref{Pre_main_re},
it follows that this condition can be rewritten as
\begin{align}I-&\left(
N^{\frac{1}{2}}G_{0}N^{\frac{1}{2}}+N^{\frac{1}{2}}G_{1}J(J^{T}J)^{-1}(J^{T}J)^{-T}J^{T}G_{1}^{T}N^{\frac{1}{2}}\right)
\notag\\ &>0.\label{cond2_m}
\end{align}

Similarly, in the case when  $N$ is negative semidefinite, it
follows that $\tilde{N}
\begin{bmatrix} C_2&C_{3a}\end{bmatrix}=0.$ This implies that
\begin{align} &
\tilde{N}G_{1}=\tilde{N}\begin{bmatrix}
C_2&C_{3a}\end{bmatrix}\begin{bmatrix}
B_2\\B_{3a}\end{bmatrix}+\tilde{N}C_{3b}B_{3b},\notag\\
&\Rightarrow \tilde{N}G_{1}=\tilde{N}C_{3b}B_{3b}\text{ since
}\tilde{N}\begin{bmatrix}
C_2&C_{3a}\end{bmatrix}=0,\notag\\
&\Rightarrow \tilde{N}G_{1}J=\tilde{N}C_{3b}X^{-1}J^{T}J,\notag\\
&\Rightarrow
\tilde{N}G_{1}J(J^{T}J)^{-1}=\tilde{N}C_{3b}X^{-1}\label{C3b_2-n-delata}.
\end{align}
Substituting \eqref{Jour-f_G0}, \eqref{P3b_2} and
\eqref{C3b_2-n-delata} into condition \eqref{Eq:Con3:th2}  in
Theorem  \ref{Pre_main_re}, it follows that this condition can be
rewritten as
\begin{align}&\det\left(I+\left(
\tilde{N}G_{0}\tilde{N}+\tilde{N}G_{1}J(J^{T}J)^{-1}(J^{T}J)^{-T}J^{T}G_{1}^{T}\tilde{N}\right)\right)
\notag\\ &\neq0.\label{cond2_m-ndelta}
\end{align}

%
% Substituting
%\eqref{Jour-f_G0}, \eqref{P3b_2} and  \eqref{C3b_2} into Condition
%1) in Theorem  \ref{Pre_main_re}, it follows that this condition can
%be rewritten as
%\begin{align}I-\left(
%N^{\frac{1}{2}}G_{0}N^{\frac{1}{2}}+N^{\frac{1}{2}}G_{1}J(J^{T}J)^{-1}(J^{T}J)^{-T}J^{T}G_{1}^{T}N^{\frac{1}{2}}\right)
%>0.\label{cond2_m}
%\end{align}

Now, using Lemma \ref{geees} and substituting for $G_1$ and $G_2$
from \eqref{f_G0} in the Hankel matrix defined in
\eqref{Hankel_m}, it follows that
\begin{align*}%\label{gamath12}
 \Gamma=\begin{bmatrix} G_1 & G_2
\\G_2 &0
\end{bmatrix}=\begin{bmatrix} \tilde{C}
\\ \tilde{C}\tilde{A}
\end{bmatrix}\begin{bmatrix} \tilde{B}
& \tilde{A}\tilde{B}
\end{bmatrix}
\end{align*}
where
\begin{align*}
 &\tilde{A}=\begin{bmatrix}A_2 & 0\\0&A_3
\end{bmatrix}=\begin{bmatrix}0 & 0\\0&A_3
\end{bmatrix};\:\:\ \tilde{B}=\begin{bmatrix} B_2
\\ B_3
\end{bmatrix};\;\;\ \\ &\tilde{C}=\begin{bmatrix} C_2 & C_3
\end{bmatrix}.
\end{align*}
 Using this and the SVD in \eqref{SVD-H-1}, it follows that
 \begin{align*}%\label{gamath13}
 \begin{bmatrix} \tilde{C}
\\ \tilde{C}\tilde{A}
\end{bmatrix}\begin{bmatrix} \tilde{B}
& \tilde{A}\tilde{B}
\end{bmatrix}=\begin{bmatrix} U_1
\\U_2
\end{bmatrix}V^T.
\end{align*}
 Using Lemma \ref{full-rank} and  Lemma \ref{matrix-lemma}, it follows that there exists  a nonsingular matrix $R$ such that
\begin{align}\label{uugama}
 U=\begin{bmatrix} U_1
\\U_2
\end{bmatrix}=\begin{bmatrix} \tilde{C}
\\ \tilde{C}\tilde{A}
\end{bmatrix}R=\begin{bmatrix} \hat{C}
\\ \hat{C}\hat{A}
\end{bmatrix},
\end{align}
where
\begin{align*}
 \hat{C}=\tilde{C}R,\\
\hat{A}=R^{-1}\tilde{A}R.
\end{align*}
This implies that $\hat{A}^2=R^{-1}\tilde{A}^2 R=0$ since
$\tilde{A}^2 =0$.
It follows that $U\hat{A}=\begin{bmatrix} \hat{C}\hat{A}
\\ \hat{C}\hat{A}^2
\end{bmatrix}=\begin{bmatrix} \hat{C}\hat{A}
\\ 0
\end{bmatrix}=\begin{bmatrix} U_2
\\ 0
\end{bmatrix}$, which implies
\begin{align*}
& \hat{A}=U^TU\hat{A}=U^T\begin{bmatrix} U_2
\\ 0\end{bmatrix}=U_{1}^TU_2.%\label{A-hat}
\end{align*}
Using this and \eqref{SVD-uu}, it follows that
\begin{align}\label{span:A:hat}
\mathcal{N}(\hat{A})=span\{\hat{V_2}\}.
\end{align}
 Also,
since $\tilde{A}=\begin{bmatrix}A_2 & 0\\0&A_3
\end{bmatrix}=\begin{bmatrix}0&0 & 0\\0&0&I\\0&0 & 0
\end{bmatrix}$, it follows that
\begin{align}\label{span:A:delta}
\mathcal{N}(\tilde{A})=span\{\begin{bmatrix} I&0
\\ 0&I\\0&0\end{bmatrix}\}.
\end{align}
Now observe  that we can write the matrix $\begin{bmatrix}
C_2&C_{3a}\end{bmatrix}$ as
\begin{align}
\begin{bmatrix} C_2&C_{3a}\end{bmatrix}=\tilde{C}\begin{bmatrix} I&0
\\ 0&I\\0&0\end{bmatrix}\label{c_2C_3a}.
\end{align}
%Now, consider the SVD of the matrix $\hat{A}$ as following
%\begin{align}
%\hat{A}=\hat{U} \hat{S}\hat{V}^T=\hat{U}\begin{bmatrix}S_1&0
%\\ 0&0
%\end{bmatrix}\begin{bmatrix} \hat{V_1}^T
%\\ \hat{V_2}^T
%\end{bmatrix},
%\end{align}
%where $S_1>0$, and $\hat{V}$ is orthogonal matrix.
 Also, observe that $\hat{A}x=0 $  if and only if
\begin{align*}
&  R^{-1}\tilde{A}Rx=0 \\
 \Leftrightarrow& \tilde{A}Rx=0 \\
 \Leftrightarrow& Rx\in \mathcal{N}(\tilde{A}).
\end{align*}
 Hence, $\mathcal{N}(\tilde{A})=R\mathcal{N}(\hat{A})$. Therefore it follows from \eqref{span:A:hat} and  \eqref{span:A:delta} that
 \begin{align*}R\; span\{\hat{V_2}\}=span
\{R\hat{V_2}\} =span\{\begin{bmatrix} I&0
\\ 0&I\\0&0\end{bmatrix}\}.
 \end{align*}
This implies that there exists a  nonsingular matrix $\hat{R}$ such
that
\begin{align*}
\begin{bmatrix} I&0
\\ 0&I\\0&0\end{bmatrix}=R \hat{V_2} \hat{R}.%\label{span_1}
\end{align*}
 Substituting  this into \eqref{c_2C_3a} and using \eqref{uugama} implies
 \begin{align}\begin{bmatrix}
C_2&C_{3a}\end{bmatrix}=\tilde{C}R\hat{V_2}\hat{R}=\hat{C}\hat{V_2}\hat{R}=U_{1}\hat{V_2}\hat{R}=F\hat{R}\label{C2_C3a_3},
\end{align}
where $F=U_{1}\hat{V_2}$ as in \eqref{F-SVD1}. Substituting
\eqref{C2_C3a_3}  into the matrix \eqref{N-g-lemma-1} and
\eqref{segma-N} implies that
\begin{align}\label{Nf_matrix}
 N&=\left( \bar{G}%
(0)-\bar{G}(0)%
F\hat{R}
(\hat{R}^TF^T\bar{G}(0)F\hat{R})^{-1}%
\hat{R}^TF^T%
\bar{G}(0)\right)\notag\\&=\bar{G}(0)-\bar{G}(0)F\left( F^{T}\bar{G}(0)F\right) ^{-1}F^{T}%
\bar{G}(0)\notag\\&=N_f,
\end{align} where $N_f$ is defined as into \eqref{N-f-m}.
Substituting  \eqref{Nf_matrix} in  \eqref{cond2_m} and
\eqref{cond2_m-ndelta} implies that conditions \eqref{Eq:Con2:th2}
and \eqref{Eq:Con3:th2} in Theorem \ref{Pre_main_re}  are equivalent
to conditions \eqref{Eq:Con2:th1} and \eqref{Eq:Con3:th1} in the
theorem respectively.

Also, \eqref{C2_C3a_3} implies   that
\begin{align*}\Xi=
\begin{bmatrix}
C_{2}^{T} \\
C_{3a}^{T}
\end{bmatrix}
\bar{G}(0)
\begin{bmatrix}
C_{2} & C_{3a}
\end{bmatrix}=\hat{R}^TF^T\bar{G}(0)F\hat{R}.
\end{align*}
It follows that condition \eqref{Eq:Con1:th2} in  Theorem
\ref{Pre_main_re} is equivalent to condition \eqref{Eq:Con1:th1} in
the theorem since $\hat{R}$ is invertible. This completes the proof
of the theorem. \hfill $\blacksquare$
%\begin{flushright}
%$\blacksquare$
%\end{flushright}

%=======================
%=======================
%=======================
%=proof of Corollary 1==
%=======================
%=======================

\emph{Proof  of Corollary \ref{min:result:clo3}:} In order to prove
this corollary, we show that the stability conditions and the
assumptions in Corollary \ref{min:result:clo2-jour} are equivalent
to the stability conditions and the assumptions in this corollary.
First, it is straightforward to verify that the conditions $k\neq0$
and $n_2=0$ are equivalent to the conditions  $G_2\neq0$ and
$G_1=0$. Also, using \eqref{Jour-f_G0} and the decomposition in
\eqref{JJ-m}, it follows that $C_{3a}B_{3b}=JJ^T$, and hence Lemma
\ref{matrix-lemma} implies that there exist an invertible matrix $X$
such that $C_{3a}=JX$. This implies that the matrix $N_2$ in
\eqref{N-2-g10} is equal  to the matrix $N$ in \eqref{N-g-croll-2}.
Also, since $C_{3a}=JX$ and $X$ is invertible, it follows that
condition \eqref{Eq:Con1:crol3} in Corollary \ref{min:result:clo3}
is equivalent to condition \eqref{Eq:Con1:crol7} in Corollary
\ref{min:result:clo2-jour}. Since $G_1=0$, it follows that
\begin{align}\label{nc3b-f}
  &NG_1= N(C_{3a}B_{3a}+C_{3b}B_{3b})=0, \notag\\
  &\Rightarrow NC_{3b}B_{3b}=0, \text{ since } NC_{3a}=0, \notag\\
  &\Rightarrow NC_{3b}=0, \text{ since } B_{3b} \text{ is of full rank}.
\end{align}
 This implies that $N^{\frac{1}{2}}C_{3b}=0$ in the case when $N$ is positive semidefinite.
Using the fact that $G_0=-C_1A_{1}^{-1}B_1$ from  Lemma \ref{geees},
it follows that condition \eqref{Eq:Con2:crol3} in Corollary
\ref{min:result:clo3} is equivalent to condition
\eqref{Eq:Con2:crol7} in Corollary \ref{min:result:clo2-jour}. Also,
in the case when $N$ is negative  semidefinite \eqref{nc3b-f}
implies that $\tilde{N}C_{3b}=0$. Using the fact that
$G_0=-C_1A_{1}^{-1}B_1$ from  Lemma \ref{geees}, it follows that
condition \eqref{Eq:Con3:crol3} in Corollary \ref{min:result:clo3}
is equivalent to condition \eqref{Eq:Con3:crol7} in Corollary
\ref{min:result:clo2-jour}. This completes the proof of the
corollary. \hfill $\blacksquare$

%Now, we present the proofs  of Corollaries
%\ref{min:result:clo1}-\ref{min:result:clo5}.

%=======================
%=======================
%=======================
%=proof of Corollary 2==
%=======================
%=======================

\emph{Proof  of Theorem \ref{min:result:clo3.1}:} In order to prove
this theorem, we first  show  that
$\mathcal{N}(G_2)\subseteq\mathcal{N}(G_0^T)$ implies the condition
$N\begin{bmatrix} C_{1} & C_{3b}
\end{bmatrix}=0$ in Corollary \ref{Pre_main_re-G1-G2-0}, in
the case when $G_1=0$. Indeed, suppose that
$\mathcal{N}(G_2)\subseteq\mathcal{N}(G_0^T)$. This  implies that
\begin{align*}%\label{eq:rang-g2}
 \mathcal{R}(G_2)\supseteq\mathcal{R}(G_0)
\end{align*}
where  $\mathcal{R}(\cdot)$ denotes the range space of a matrix.
Since  $G_2=JJ^T$  and $J$ is of full rank, it follows that
\begin{align*}%\label{eq:rang-g2-2}
  \mathcal{R}(JJ^T)\supseteq\mathcal{R}(G_0),
\end{align*}
which implies that there exist a matrix $Q$ such that $G_0=JQ$.
Then, we consider the matrix $N$ defined as
\begin{align*}%\label{N2-g2-croll-1}
N=\bar{G}(0)-\bar{G}(0)C_{3a}(C_{3a}^{T}\bar{G}(0)C_{3a})^{-1}C_{3a}^{T}\bar{G}(0),
 \end{align*}
which is the formula for the matrix $N$ in Corollary
\ref{Pre_main_re-G1-G2-0} in the case in which $G_1=0$. This implies
that
\begin{align*}
  &NG_0=NJQ=0, \text{ since } NJ=0,
  \end{align*}
  and hence from Lemma \ref{geees}, it follows that
  \begin{align*}%\label{eq:rang-g2-3}
  & NC_1A_{1}^{-1}B_1=0.
  \end{align*}
  Using a similar calculation as in equation   \eqref{B1_m} in the proof of Theorem \ref{Pre_main_re},  this %\eqref{eq:rang-g2-3}
 implies that
  \begin{align}\label{eq:g2-C1}
  &NC_1A_{1}^{-1}(A_{1}P_{1}^{-1}C_{1}^{T}-P_{1}^{-1}L_{1}^{T}M)N=0,\notag \\
  &\Rightarrow NC_1P_{1}^{-1}C_{1}^{T}N=0 \text{ using \eqref{eq:NM11}},\notag \\
  &\Rightarrow NC_1=0.
\end{align}
Also, since $G_1= C_{3}B_{3}=C_{3a}B_{3a}+C_{3b}B_{3b}=0$, it
follows that
\begin{align}\label{eq:g2-Cb3}
  &NC_{3a}B_{3a}+NC_{3b}B_{3b}=0,\notag \\
  &\Rightarrow NC_{3b}B_{3b}=0, \text{ since  } NC_{3a}=0, \notag \\
  &\Rightarrow NC_{3b}=0 \text{ since  } B_{3b} \text{ is of full rank. }
  \end{align}
Using \eqref{eq:g2-C1} and  \eqref{eq:g2-Cb3}, it follows that
$N\begin{bmatrix} C_{1} & C_{3b}
\end{bmatrix}=0$. This implies  the assumptions
in Corollary \ref{Pre_main_re-G1-G2-0} are satisfied  in the case
when $G_1=0$. Also, as in the proof of
 Theorem \ref{min:result}, the condition \eqref{Eq:Con1:crol3} reduces to condition \eqref{Eq:Con1:th2} in Corollary \ref{Pre_main_re-G1-G2-0}. \hfill $\blacksquare$
%\emph{Proof  of Corollary \ref{min:result:clo3.1}:} ??????
%Since $\mathcal{N}(G_2)\subseteq\mathcal{N}(G_0^T)$ and using the fact that $N_2G_2=0$ it follows that $N_2G_0=0$. This implies that conditions   \eqref{Eq:Con2:crol3} and  \eqref{Eq:Con3:crol3} in Corollary \ref{min:result:clo3} are automatically satisfied. Hence, proof  of Corollary \ref{min:result:clo3.1} follows using Corollary \ref{min:result:clo3}. \hfill $\blacksquare$
%%, it follows that the closed-loop positive-feedback interconnection between $G(s)$ and $\bar{G}(s)$  is internally stable   if and only if conditions \eqref{Eq:Con1:crol3}  is satisfied.
%%
%

%=======================
%=======================
%=======================
%=proof of Corollary 3==
%=======================
%=======================

%\emph{Proof  of Corollary \ref{min:result:clo4}:}
% Since $G_2$ is assumed to be positive definite  in this
%corollary, it follows that the matrix $J$ in \eqref{JJ-m} is
%invertible. Then, the condition \eqref{Eq:Con1:crol1} reduces to the
%condition $\bar{G}(0)<0$ and the corollary follows immediately from
%Corollary \ref{min:result:clo3.1}. \hfill $\blacksquare$

%=======================
%=======================
%=======================
%=proof of Theorem 2==
%=======================
%=======================

\emph{Proof  of Theorem \ref{min:result:clo1}:} In order to prove
this theorem, we show that the stability conditions and the
assumptions in this theorem  are equivalent to the stability
conditions and the assumptions in Corollary
\ref{min:result:clo1-jour}. First, it is straightforward to verify
that the conditions  $n_2\neq0$ and $k=0$ are equivalent to the
conditions  $G_1\neq0$ and $G_2=0$. Using Lemma \ref{matrix-lemma}
and the fact that $G_1=C_2B_2$ from Lemma \ref{geees}, it follows
that there exists an invertible matrix $R$ such that $C_2=F_1R$,
where the matrix $F_1$ is given in \eqref{C-D-1-f_G1}. This implies
that the matrix $N_1$ in \eqref{N-1-g20} is equal  to the  matrix
$N$ in \eqref{N-g-croll-1}. Also,  since $C_2=F_1R$ and $R$ is
invertible, it follows that condition \eqref{Eq:Con1:crol1} in this
theorem   is equivalent to condition \eqref{Eq:Con1:crol6} in
Corollary \ref{min:result:clo1-jour}. Finally, using the fact that
$G_0=-C_1A_{1}^{-1}B_1$ from Lemma \ref{geees},  it follows that
conditions \eqref{Eq:Con2:crol1} and \eqref{Eq:Con3:crol1} in this
theorem are equivalent to conditions \eqref{Eq:Con2:crol6} and
\eqref{Eq:Con3:crol6} in Corollary \ref{min:result:clo1-jour}
respectively. This completes the proof of the theorem. \hfill
$\blacksquare$
%\begin{flushright}
%$\blacksquare$
%\end{flushright}
%\end{proofc1}

%=======================
%=======================
%=======================
%=proof of Corollary 2222..11111111111111==
%=======================
%=======================

\emph{Proof  of Theorem \ref{min:result:clo2.1}:} In order to prove
this theorem, we first  show  that
$\mathcal{N}(G_1^T)\subseteq\mathcal{N}(G_0^T)$ implies the
condition  $N\begin{bmatrix} C_{1} & C_{3b}
\end{bmatrix}=0$ in Corollary \ref{Pre_main_re-G1-G2-0}, in the case when $G_2=0$. Indeed, suppose that $\mathcal{N}(G_1^T)\subseteq\mathcal{N}(G_0^T)$. This  implies that
\begin{align}\label{eq:rang-g1}
  \mathcal{R}(G_1)\supseteq\mathcal{R}(G_0).
\end{align}
Since  $G_1=C_2B_2$ from Lemma \ref{geees} and $B_2$ is of full rank
using Lemma \ref{full-rank}, it follows that
$\mathcal{R}(C_2)=\mathcal{R}(G_1)$. Using \eqref{eq:rang-g1}, it
follows that
\begin{align*}%\label{eq:rang-g1-2}
  \mathcal{R}(C_2)\supseteq\mathcal{R}(G_0),
\end{align*}
which implies that there exists a matrix $Q$ such that $G_0=C_2Q$.
Then, we consider the matrix $N$ defined as
\begin{align*}%\label{N2-g-croll-1}
N=\bar{G}(0)-\bar{G}(0)C_{2}(C_{2}^{T}\bar{G}(0)C_{2})^{-1}C_{2}^{T}\bar{G}(0),
 \end{align*}
which is the formula for the matrix $N$ in Corollary
\ref{Pre_main_re-G1-G2-0} for the case in which $G_2=0$. This
implies that
\begin{align*}%\label{eq:rang3-g1-3}
  &NG_0=NC_2Q=0,\text{ since }NC_2=0
  \end{align*}
  and hence from Lemma \ref{geees} it follows that
  \begin{align*}%\label{eq:rang-g1-3}
  & NC_1A_{1}^{-1}B_1=0.
  \end{align*}
  Using a similar calculation as in equation   \eqref{B1_m} in the proof of Theorem \ref{Pre_main_re}, this %\eqref{eq:rang-g1-3}
implies
  \begin{align}
  &NC_1A_{1}^{-1}(A_{1}P_{1}^{-1}C_{1}^{T}-P_{1}^{-1}L_{1}^{T}M)N=0,\notag \\
  &\Rightarrow NC_1P_{1}^{-1}C_{1}^{T}N=0, \text{ using \eqref{eq:NM11}}\notag \\
  &\Rightarrow NC_1=0\notag.
\end{align}
This implies the assumptions  in Corollary \ref{Pre_main_re-G1-G2-0}
are  satisfied  in the case when $G_2=0$. Also, as in the proof of
Theorem \ref{min:result},  condition \eqref{Eq:Con1:crol1} reduces
to condition \eqref{Eq:Con1:th2} in Corollary
\ref{Pre_main_re-G1-G2-0}. \hfill $\blacksquare$

%Since $\mathcal{N}(G_2)\subseteq\mathcal{N}(G_0^T)$ and using the fact that $N_2G_2=0$ it follows that $N_2G_0=0$. This implies that conditions   \eqref{Eq:Con2:crol3} and  \eqref{Eq:Con3:crol3} in Corollary \ref{min:result:clo3} are automatically satisfied. Hence, proof  of Corollary \ref{min:result:clo3.1} follows using Corollary \ref{min:result:clo3}. \hfill $\blacksquare$
%%, it follows that the closed-loop positive-feedback interconnection between $G(s)$ and $\bar{G}(s)$  is internally stable   if and only if conditions \eqref{Eq:Con1:crol3}  is satisfied.
%%
%

%=======================
%=======================
%=======================
%=proof of Corollary 2222.22222=
%=======================
%=======================

\emph{Proof  of Corollary \ref{min:result:clo2}:} In the case where
$G_1$ is assumed to be invertible in this corollary, it follows that
the matrix $F_1$ in \eqref{C-D-1-f_G1} is invertible. Then, the
condition \eqref{Eq:Con1:crol1} reduces to the condition
$\bar{G}(0)<0$ and the corollary follows immediately from Corollary
\ref{min:result:clo2.1}. In the case where  $G_2$ is assumed to be
positive definite in this corollary, it follows that the matrix $J$
in \eqref{JJ-m} is invertible. Then, the condition
\eqref{Eq:Con1:crol1} reduces to the condition $\bar{G}(0)<0$ and
the corollary follows immediately from Corollary
\ref{min:result:clo3.1}. \hfill $\blacksquare$

\end{document}